\documentclass[11pt,a4paper]{amsart}
\usepackage{geometry}
\usepackage{amsmath}
\usepackage{amssymb}
\usepackage{amsthm}
\usepackage{color}
\usepackage{xspace}
\usepackage{ulem}
\usepackage{comment}
\usepackage[driverfallback=dvipdfm]{hyperref}
\newtheorem{theorem}{Theorem}
\newtheorem{lemma}[theorem]{Lemma}
\newtheorem{corollary}[theorem]{Corollary}
\newtheorem{proposition}[theorem]{Proposition}

\def\blem{\begin{lemma}}
\def\elem{\end{lemma}}
\def\bmat{\begin{pmatrix}}
\def\emat{\end{pmatrix}}

\def\ep{\varepsilon}
\def\R{\mathbb{R}}
\def\N{\mathbb{N}}

\def\bproof{\begin{proof}}
\def\eproof{\end{proof}}

\def\bald{\begin{aligned}}
\def\eald{\end{aligned}}

\def\PucF{\mathcal F}
\def\PucP{\mathcal M^+}
\def\PucM{\mathcal M^-}

\DeclareMathOperator\tr{tr}

\def\gep{\varepsilon}      
\def\ep{\gep}

\def\bcases{\begin{cases}}
\def\ecases{\end{cases}}
\def\balns{\begin{align*}}
\def\ealns{\end{align*}}
\def\bald{\begin{aligned}}
\def\eald{\end{aligned}}
\def\beq{\begin{equation}}
\def\eeq{\end{equation}}
\def\bred{\begin{color}{red}} 
\def\ered{\end{color}}
\def\gO{\Omega} 

\def\1{\mathbf{1}}

\theoremstyle{definition}
\newtheorem{definition}{Definition}
\theoremstyle{plain}

\theoremstyle{remark}
\newtheorem{remark}[definition]{Remark}

\allowdisplaybreaks[4]

\title[Logistic equations with sanctuary]{Fully nonlinear logistic equations with sanctuary}

\author[I. Birindelli]{Isabeau Birindelli}
\address[I. Birindelli]{Dipartimento di Matematica Guido Castelnuovo, Sapienza 
Universit\`a di Roma, Piazzale Aldo Moro 5, Roma, Italy.}
\email{isabeau@mat.uniroma1.it}

\author[G. Galise]{Giulio Galise}
\address[G. Galise]{Dipartimento di Matematica Guido Castelnuovo, Sapienza 
Universit\`a di Roma, Piazzale Aldo Moro 5, Roma, Italy.}
\email{galise@mat.uniroma1.it}

\author[F. Leoni]{Fabiana Leoni}
\address[F. Leoni]{Dipartimento di Matematica Guido Castelnuovo, Sapienza 
Universit\`a di Roma, Piazzale Aldo Moro 5, Roma, Italy.}
\email{leoni@mat.uniroma1.it}

\subjclass[2020]{35J60; 35D40; 35P30}
\keywords{Fully nonlinear elliptic equations; principal eigenvalue; sharp existence conditions; asymptotic analysis}

\begin{document}

\begin{abstract}
For the fully nonlinear stationary logistic equation  $\PucF(x,D^2u)+\mu u=k(x)u^p$ with $p>1$ and $k(x)\geq 0$, in a bounded domain with Dirichlet boundary condition, we determine, in terms of $\mu$, the existence and uniqueness or the nonexistence of a positive solution. Furthermore, we study the asymptotic behavior of the solutions when $\mu$ approaches the boundary points of the existence range. 
\end{abstract}

\maketitle
\section{Introduction}

\bigskip

Let $\Omega$ be a bounded domain of $\R^N$ and $p>1$. Let $k(x)$ be a bounded continuous function in $\Omega$ such that $0\leq k(x)\leq k_1$.
In this paper we study existence, nonexistence and asymptotic for the  following problem
 \begin{equation}\label{eq1}
 \left\{\begin{array}{cl}
 \PucF(x,D^2u)+\mu u=k(x)u^p & \mbox{in }\ \Omega\\
 u>0 
 & \mbox{in }\ \Omega \\
u=0 & \mbox{on }\ \partial\Omega,
\end{array}
\right.
 \end{equation}
 where $ \PucF$ is a uniformly elliptic operator, positively homogenous of degree 1.
 
 This equation has been extensively studied  when $\PucF(x,D^2u):=\Delta u$, beginning with the famous paper of Kazdan and Warner \cite{KW} who studied it because it is related to \lq\lq{\it the 
 problem of describing the set of scalar curvature functions associated with Riemannian metrics on a given connected, but not necessarily orientable, manifold of dimension $N\geq 3$}\rq\rq.
 The scalar curvature is the function $-k(x)$ which, in that paper, is supposed to be negative somewhere.
 
 Here, as far as the function $k\in C(\Omega)$ is concerned, we will consider two cases:  either it satisfies 
\begin{equation}\label{hypop1k}\tag{K1}
 \exists\, k_0>0\ \mbox{such that}\ k(x)\geq k_0>0 \ \, \forall x\in\Omega
 \end{equation}
or
\begin{equation}\label{hypop2k}\tag{K2}
\begin{cases}
\exists\,  \Omega_0\subset\subset\Omega\ \mbox{such that}\ k(x)=0\ \, \forall x\in \overline{\Omega_0}\, ,\\
\displaystyle\inf_{\Omega\backslash \Omega_1}k>0\ \ \forall\,  \Omega_1\subsetneqq\Omega \ \mbox{such that} \ \Omega_0\subset\subset\Omega_1.
\end{cases} 
 \end{equation}
In the case $\PucF=\Delta$, the hypothesis \eqref{hypop2k} corresponds to the equation studied by Ouyang \cite{Ou1, Ou2} and  Del Pino and Felmer in \cite{DP, DPF}. See also Du and Huang \cite{DuHu} for the study of the asymptotic behavior of solutions.

In general, the parabolic problem
$$
 \left\{\begin{array}{cl}
u_t- \PucF(x,D^2u)=\mu u-k(x)u^p & \mbox{in }\ (0,+\infty)\times\Omega\\
 u>0 
 & \mbox{in }\ (0,+\infty)\times\Omega \\
u=0 & \mbox{on }\ (0,+\infty)\times\partial\Omega\\
u(0,\cdot)=u_0 & \mbox{in }\  \overline\Omega,
\end{array}
\right.
$$
can be interpreted as the logistic equation in population dynamics. In that model $u(t,x)$ represents the density of the population, $\mu u$ is its natural growth, while
$-k(x)u^p$ is a logistic term that counters the natural growth of the population (lack of food, enemy, etc...). The set $\Omega_0$ where $k(x)=0$ is like an oasis, where the population 
has no enemies and it can follow its natural growth. The second order term $\PucF(x,D^2u)$ represents  the spatial diffusion of the species. When it is the Laplacian the diffusion is linear and homogenous; here,  we focus on  a
nonlinear diffusion case. For more details, we refer  to e.g. \cite{CanCos} or \cite{AMS}.  We consider here the stationary case, a first step in order to study the parabolic problem in a future work.

We shall see that, as in most works concerning the logistic equation, the principal eigenvalue plays a crucial role, acting as the threshold  which separates the  existence from the
nonexistence regime. 

For variational operators the principal eigenvalue is defined through the well known Rayleigh's quotient.  On the other hand, following the approach of Berestycki, Nirenberg, Varadhan \cite{BNV},   the notion of principal eigenvalue can be extended to a vast family of  fully nonlinear operators, by using its   characterization  through the validity of the Maximum Principle. More precisely, we will use the definition of principal eigenvalue given by
$$\lambda_1^+(\Omega):=\sup\{\mu\in\R\ \mbox{such that}\ \exists\,  \psi>0\ \mbox{and}\  \PucF(x,D^2\psi)+\mu \psi\leq 0 \  \mbox{in}\ \Omega \}.$$ 
The precise conditions on the class of operators that we consider and on the principal eigenvalue will be given in the next section, but they are the standard ones used  in the theory of viscosity solutions. It is important to remark that in  the whole paper,  when  considering solutions, subsolutions or supersolutions,  we always mean them in the viscosity sense.
 
 We will always suppose that the operator $\PucF(x,D^2\cdot)$ satisfies hypothesis \eqref{hypop}, \eqref{hypop2} and \eqref{hypop2'} given in subsection \ref{Hypotheses and notations}, and that both $\Omega$ and $\Omega_0$ are $C^2$  bounded domains.

Our main results are contained in  the following three theorems. 

The first one sharply relates the existence of solutions of \eqref{eq1} with the values of $\mu$ compared with  the principal eigenvalue.

\begin{theorem}\label{KW} Suppose that $\PucF$ satisfies \eqref{hypop}, \eqref{hypop2} and \eqref{hypop2'}.
\begin{enumerate}
\item If $k(x)$  satisfies \eqref{hypop1k}, then there exists a unique solution of \eqref{eq1} if and only if $\mu>\lambda_1^+(\Omega)$.
\item If $k(x)$ satisfies \eqref{hypop2k}, then there exists a unique solution of \eqref{eq1} if and only if $\lambda_1^+(\Omega)<\mu<\lambda_1^+(\Omega_0)$.
\end{enumerate}
\end{theorem}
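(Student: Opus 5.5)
We follow the classical sub/supersolution approach adapted to viscosity solutions, taking from the next section the standard facts on $\lambda_1^+$: existence of a positive eigenfunction $\varphi_1\in C(\overline\Omega)$ with $\PucF(x,D^2\varphi_1)+\lambda_1^+(\Omega)\varphi_1=0$ and $\varphi_1=0$ on $\partial\Omega$; the maximum principle below $\lambda_1^+$; and strict monotonicity of $\lambda_1^+$ with respect to the domain. The proof has four parts: nonexistence, existence, uniqueness, and in case (2) the bound from above on $\mu$.

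\textbf{Nonexistence for $\mu\le\lambda_1^+(\Omega)$.} Let $u$ be a solution. Since $k\ge0$ and $u>0$, we get $\PucF(x,D^2u)+\mu u=ku^p\ge0$. If $k\not\equiv0$, the right-hand side is not identically zero. A positive subsolution with zero boundary data and $\mu\le\lambda_1^+$ must then be a multiple of $\varphi_1$ with $\mu=\lambda_1^+$; this is the BNV/Ishii--Yoshimura type characterization: no positive $u$ with $\PucF(u)+\lambda_1^+ u\ge0$, $u\not\equiv c\varphi_1$. If $u=c\varphi_1$, then $ku^p\equiv0$, which forces $k\equiv0$, a contradiction in both (K1) and (K2).

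\textbf{Nonexistence for $\mu\ge\lambda_1^+(\Omega_0)$ under (K2).} In $\Omega_0$ we have $k=0$, so $u>0$ on $\overline{\Omega_0}$ solves $\PucF(x,D^2u)+\mu u=0$ in $\Omega_0$ with $u>0$ on $\partial\Omega_0$. Then $u$ is a positive strict supersolution-type object on $\overline{\Omega_0}$. Comparing $u$ with $t\varphi_1^{\Omega_0}$, where $t$ is the largest value with $t\varphi_1^{\Omega_0}\le u$, and using the strong maximum principle gives $\lambda_1^+(\Omega_0)>\mu$. Equivalently, positivity on the boundary yields that $\mu<\lambda_1^+(\Omega_0)$ by the characterization of $\lambda_1^+$ via strict supersolutions on a slightly larger domain, combined with strict domain monotonicity.

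\textbf{Existence.} Take the subsolution $\underline u=\ep\varphi_1$. We have $\PucF(\ep\varphi_1)+\mu\ep\varphi_1=(\mu-\lambda_1^+)\ep\varphi_1\ge k_1\ep^p\varphi_1^p$ for $\ep$ small, since $p>1$. In case (K1), the constant $M=(\mu/k_0)^{1/(p-1)}$ is a supersolution. The solution is then obtained by Perron's method or by monotone iteration with a large shift, solving $\PucF(u_{n+1})-Cu_{n+1}=-(\mu+C)u_n+ku_n^p$, which uses the solvability of the Dirichlet problem for proper operators. In case (K2), constants fail near $\Omega_0$. We build the supersolution by choosing $\Omega_1$ with $\Omega_0\subset\subset\Omega_1$ and $\lambda_1^+(\Omega_1)>\mu$, which is possible by continuity of $\lambda_1^+$ under domain perturbation. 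Let $\psi$ be a positive eigenfunction of a slightly larger domain $\Omega_1'\supset\supset\Omega_1$ with $\lambda_1^+(\Omega_1')>\mu$, so $\PucF\psi+\mu\psi<0$ on $\overline{\Omega_1}$. Put $\overline u=\min(M\psi,\,C)$ glued appropriately, with $C$ large so that on $\Omega\setminus\Omega_1$ the constant satisfies $\mu C\le (\inf k) C^p$. A minimum of supersolutions is a viscosity supersolution. The gluing must be arranged so that $M\psi>C$ near $\partial\Omega_1'$ and $M\psi<C$ on $\Omega_0$. Since $\psi$ vanishes at $\partial\Omega_1'$, this forces a more careful construction. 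The cleaner option is a large solution (blow-up boundary) on $\Omega\setminus\overline{\Omega_0}$-type arguments, or simply taking $\overline u=M\psi$ on $\Omega_1$ extended by a constant outside, with the region where both are active lying in $\Omega\setminus\Omega_0$, where $k>0$ allows either to work. I expect this supersolution construction to be the main technical obstacle. Finally $\ep\varphi_1\le\overline u$ for $\ep$ small, by the Hopf lemma.

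\textbf{Uniqueness.} Given two solutions $u,v$, let $\tau^*=\sup\{\tau:\tau u\le v\}$, which is positive by the Hopf boundary lemma since both have comparable normal derivatives. Suppose $\tau^*<1$. Then $w=\tau^* u$ satisfies $\PucF(w)+\mu w=k(\tau^*)^{1-p}w^p\ge k w^p$, so $w$ is a subsolution, strict where $k>0$. Comparing $w$ with $v$ through the strong maximum principle applied to the difference (using homogeneity and the structure of Pucci bounds) gives $w<v$ in a way improvable to $(\tau^*+\delta)u\le v$, a contradiction. Hence $u\le v$, and by symmetry $u=v$. Where $k=0$ (in $\Omega_0$) the strictness must come from connectedness and the strong maximum principle across $\partial\Omega_0$.
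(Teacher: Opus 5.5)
Your nonexistence arguments and your uniqueness scheme are sound and close to the paper's. Comparing $u$ with $t^*\phi_{\Omega_0}$ via the strong minimum principle is a valid substitute for the paper's observation that $u-\delta/2$ would be a positive supersolution for $\lambda_1^+(\Omega_0)+\varepsilon$. Likewise, barrier bounds $v\ge c\,d$ and $u\le A\,d$ can replace the ABP argument on sets of small measure in Proposition~\ref{comp}.

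The genuine gap is the supersolution under \eqref{hypop2k}, which is exactly where the threshold $\lambda_1^+(\Omega_0)$ enters the existence proof. You flag it as the main obstacle but do not supply it, and the two-piece gluings you suggest cannot work for any choice of domains and constants. Suppose $M\psi$ is used near $\Omega_0$, where $\psi$ is a principal eigenfunction of some $\Omega_1'\supset\supset\Omega_0$ with $\lambda_1^+(\Omega_1')>\mu$. Suppose the constant $C$ is used near $\partial\Omega$; it is a supersolution only where $k\ge\mu C^{1-p}$, hence not near $\overline{\Omega_0}$. Gluing by a minimum then requires $M\psi>C$ near the outer edge $\Gamma_{\mathrm{out}}$ of the region where $M\psi$ is used, and $M\psi<C$ near the inner edge $\Gamma_{\mathrm{in}}$ of the region where $C$ is used. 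Moreover $\Gamma_{\mathrm{in}}$ lies inside the region bounded by $\Gamma_{\mathrm{out}}$. Since $\PucM(D^2\psi)\le\PucF(x,D^2\psi)=-\lambda_1^+(\Omega_1')\psi<0$, the minimum principle in that region gives $M\psi\ge\min_{\Gamma_{\mathrm{out}}}M\psi>C$ on $\Gamma_{\mathrm{in}}$, a contradiction. So ``$\min(M\psi,C)$ glued appropriately'' and ``$M\psi$ on $\Omega_1$ extended by a constant'' both fail structurally, not merely because $\psi$ vanishes on $\partial\Omega_1'$.

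The missing idea is a third, transitional barrier. It must decrease outward from above the eigenfunction level down to $C$, and it is a supersolution of the \emph{nonlinear} equation only because $\inf k>0$ on the transition layer and $p>1$.

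In Lemma~\ref{supso} the eigenfunction of $\Omega_2\supset\supset\Omega_1\supset\supset\Omega_0$, with $\mu<\lambda_1^+(\Omega_2)$, is normalized so that $\widetilde{\phi_{\Omega_2}}\ge C$ on $\overline{\Omega_1}$. On the inner collar $\{0<d_{\Omega_1}<\delta\}$ one takes $\psi=C+\alpha d_{\Omega_1}^3$, with $\alpha\delta^3$ fixed so that $\psi\ge\widetilde{\phi_{\Omega_2}}$ on $\{d_{\Omega_1}=\delta\}$. Then $\PucF(x,D^2\psi)+\mu\psi\le 6\Lambda N\alpha\delta+\mu(C+\alpha\delta^3)$, which is linear in $C$. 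On the other hand $k\psi^p\ge(\inf_{\{0<d_{\Omega_1}<\delta\}}k)\,C^p$, so a large $C$ works. The cubic profile makes the junction with the constant $C$ across $\partial\Omega_1$ of class $C^2$. The supersolution is $\widetilde{\phi_{\Omega_2}}$ away from the collar, $\min\{\widetilde{\phi_{\Omega_2}},\psi\}$ in the collar, and $C$ outside $\Omega_1$.

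Your remark about large solutions could be turned into an alternative: take the minimum of $M\psi$ with a supersolution in $\Omega\setminus\overline{\Omega_1}$ that blows up on $\partial\Omega_1$. But such a barrier still has to be constructed, and its construction relies on the same mechanism. As written, the existence half of part (2) is not proved.
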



\medskip
The next theorems are concerned with the  behavior of the solution for $\mu$ converging to the  boundary points of the existence range. We denote by $\phi_\Omega$  the positive solution of \eqref{eq1.1} such that $\left\|\phi_\Omega\right\|_\infty=1$.

\begin{theorem} \label{converg1} 
Suppose that $\PucF$ satisfies \eqref{hypop}, \eqref{hypop2} and \eqref{hypop2'}.
Assume that the function $k$ satisfies either assumption \eqref{hypop1k} or \eqref{hypop2k}, and let $u_\mu$ be the solution of \eqref{eq1}  for $\mu$ in the interval of existence of a solution.
Then, as  $ \mu\rightarrow \lambda_1^+(\Omega) $ from above, one has, uniformly in $\Omega$, 
\begin{equation}\label{tolomega} u_\mu\rightarrow 0 \end{equation}
and
\begin{equation}\label{tolomega2} \frac{u_\mu}{\|u_\mu\|_{L^\infty (\Omega)}}\rightarrow \phi_\gO\, .\end{equation}
\end{theorem}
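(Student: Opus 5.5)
We argue by compactness and uniqueness. Let $\mu_n\downarrow\lambda_1^+(\Omega)$ and write $u_n=u_{\mu_n}$. First we need a uniform $L^\infty$ bound. Under \eqref{hypop1k}, at an interior maximum point $x_n$ of $u_n$ we have $\PucF(x_n,D^2u_n)\le 0$ in the viscosity sense. The equation then gives $k_0u_n^p(x_n)\le \mu_n u_n(x_n)$, hence $\|u_n\|_\infty\le (\mu_n/k_0)^{1/(p-1)}$.

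Under \eqref{hypop2k} this argument only works if the maximum lies where $k$ is positive, so we compare instead. Fix $\mu^*\in(\lambda_1^+(\Omega),\lambda_1^+(\Omega_0))$ and let $u_{\mu^*}$ be given by Theorem \ref{KW}. For $\mu_n<\mu^*$, $u_{\mu^*}$ is a supersolution of the problem with parameter $\mu_n$. The comparison principle used in the uniqueness part of Theorem \ref{KW} (the Kazdan--Warner type argument exploiting $u^{p-1}$ increasing) then gives $u_n\le u_{\mu^*}$. This comparison step is the one we take from the proof of Theorem \ref{KW}.

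In either case $u_n$ is bounded, so the right-hand sides $\mu_nu_n-k u_n^p$ are bounded. By the $C^{1,\alpha}$ (or at least $C^\alpha$ up to the boundary) estimates for $\PucF$, the sequence is equicontinuous on $\overline\Omega$. A subsequence converges uniformly to some $u\ge0$. By stability of viscosity solutions, $u$ solves
$$\PucF(x,D^2u)+\lambda_1^+(\Omega)u=k u^p \ \text{ in } \Omega, \qquad u=0 \ \text{ on } \partial\Omega .$$

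Suppose $u\not\equiv0$. By the strong maximum principle $u>0$, so $u$ is a solution of \eqref{eq1} with $\mu=\lambda_1^+(\Omega)$. This contradicts the nonexistence part of Theorem \ref{KW}. Alternatively, $u$ satisfies $\PucF(x,D^2u)+\lambda_1^+(\Omega)u\le0$ with $u>0$ and $u\not\equiv$ eigenfunction, which contradicts the simplicity/characterization of $\lambda_1^+$. Hence $u\equiv0$, and since the limit is independent of the subsequence, \eqref{tolomega} holds.

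For \eqref{tolomega2}, set $v_n=u_n/\|u_n\|_\infty$. By homogeneity $v_n$ solves
$$\PucF(x,D^2v_n)+\mu_n v_n=k\,\|u_n\|_\infty^{p-1}v_n^p .$$
The right-hand side tends to $0$ uniformly by \eqref{tolomega}. The same regularity estimates give a uniformly convergent subsequence $v_n\to v$ with $\|v\|_\infty=1$ and $v\ge0$. Here the key point is uniform control up to the boundary, so that the sup norm is preserved in the limit; we obtain it from the global Hölder estimates for the Dirichlet problem. By stability, $\PucF(x,D^2v)+\lambda_1^+(\Omega)v=0$, $v=0$ on $\partial\Omega$. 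The strong maximum principle gives $v>0$ in $\Omega$. Simplicity of the principal eigenfunction (the result behind \eqref{eq1.1} from \cite{BNV}-type theory assumed in the paper) then forces $v=\phi_\Omega$. Uniqueness of the limit yields convergence of the whole family.

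The main obstacle is the uniform bound under \eqref{hypop2k}. If the comparison with $u_{\mu^*}$ is not directly available, we would instead build an explicit supersolution: a large constant outside a neighbourhood $\Omega_1$ of $\overline{\Omega_0}$, glued to $M\phi_{\Omega_1}$. This uses $\mu<\lambda_1^+(\Omega_1)$ for $\Omega_1$ slightly larger than $\Omega_0$, which holds by continuity of $\lambda_1^+$ with respect to the domain.
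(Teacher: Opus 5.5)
Your proof is correct and follows the paper's route: a uniform bound by comparison (the paper gets it from the monotonicity Corollary \ref{mon}), passage to the limit, and identification of both limits through Proposition \ref{maxp}. The only difference is that you obtain compactness up to $\partial\Omega$ from global H\"older estimates, whereas the paper combines interior estimates with the boundary bound $v_\mu\le Cd$ of Corollary \ref{bdryest} to keep the maximum points away from the boundary. One small slip in your ``alternatively'' remark: the limit satisfies $\PucF(x,D^2u)+\lambda_1^+(\Omega)u=ku^p\ge 0$, so it is a subsolution rather than a supersolution, and this is exactly the case covered by Proposition \ref{maxp}.
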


For the next result, we also assume that the operator $\PucF$ and the function $k$ satisfy the assumption \eqref{H} given in the Section 4, which concerns the H\"older regularity with respect to the $x$-variable . This amounts  to the standard assumption ensuring the validity of $C^{1,\alpha}$ estimates for solutions to fully nonlinear uniformly elliptic equations.

\begin{theorem} \label{converg2} Suppose that   \eqref{hypop}, \eqref{hypop2}, \eqref{hypop2'} and \eqref{H} hold true.
 Under assumption \eqref{hypop2k}, if $u_\mu$ is the solution of \eqref{eq1}  for $\lambda_1^+(\Omega)<\mu< \lambda_1^+(\Omega_0)$, then, as  $\mu\rightarrow \lambda_1^+(\Omega_0)$ from below, one has
$$u_\mu\rightarrow +\infty \ \hbox{uniformly in}\ \overline{\Omega_0}\, ,$$
$$
\frac{u_\mu}{\|u_\mu\|_{L^\infty (\Omega_0)}}\rightarrow \phi_{\Omega_0}\ \hbox{uniformly in}\ \overline{\Omega_0}
$$
and
$$u_\mu\to \underline{U}\quad \hbox{ locally uniformly in $\Omega\setminus \overline{\Omega_0}$,}$$
 where $\underline U$ is the
minimal  solution of
 \begin{equation}\label{eq2}
 \left\{\begin{array}{cl}
 \PucF(x,D^2u)+\lambda_1^+(\Omega_0) u=k(x)u^p & \mbox{in }\ \Omega\setminus\overline\Omega_0\\
 u>0 
 & \mbox{in }\ \Omega \setminus\overline\Omega_0\\
u=0 & \mbox{on }\ \partial\Omega\\
u=+\infty & \mbox{on }\ \partial\Omega_0.
\end{array}
\right.
 \end{equation}
\end{theorem}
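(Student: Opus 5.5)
The proof will use monotonicity of $u_\mu$ in $\mu$, compactness, and uniqueness/simplicity results for the principal eigenvalue. Throughout, $\mu\nearrow\lambda_1^+(\Omega_0)$ with $\mu>\lambda_1^+(\Omega)$, and $\phi_{\Omega_0}$ is the normalized principal eigenfunction of $\Omega_0$.

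\textbf{Step 1: monotonicity.} Using the uniqueness in Theorem \ref{KW} and a comparison argument, $u_\mu$ is nondecreasing in $\mu$. Indeed, for $\mu<\mu'$, $u_{\mu'}$ is a supersolution of the $\mu$-problem, and small multiples of $\phi_\Omega$ are subsolutions. Alternatively, I would run the comparison principle for the logistic equation, which holds because $u\mapsto k u^p/u$ is nondecreasing.

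\textbf{Step 2: blow-up on $\overline{\Omega_0}$.} Suppose instead that $\sup_{\Omega_0}u_\mu$ stays bounded along a sequence. I would first build a barrier showing that $u_\mu$ is then locally bounded in $\Omega$, and in fact bounded. Outside $\Omega_1\supset\supset\Omega_0$ we have $k\ge c>0$, so a Keller--Osserman type large-solution supersolution on annular regions bounds $u_\mu$ on compacts of $\Omega\setminus\overline{\Omega_0}$, uniformly in $\mu$. Combined with the assumed bound near $\Omega_0$, this gives a uniform $L^\infty$ bound. The $C^{1,\alpha}$ estimates from \eqref{H} then let me pass to the limit, producing a solution $u^*$ at $\mu=\lambda_1^+(\Omega_0)$. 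This $u^*$ is positive, since by monotonicity $u^*\ge u_{\mu_0}>0$. This contradicts the nonexistence part of Theorem \ref{KW}, case 2.

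So $M_\mu:=\|u_\mu\|_{L^\infty(\Omega_0)}\to\infty$. Now set $v_\mu=u_\mu/M_\mu$ on $\Omega_0$. It satisfies $\PucF(x,D^2v_\mu)+\mu v_\mu=0$ in $\Omega_0$, because $k=0$ on $\overline{\Omega_0}$, and $\|v_\mu\|_{L^\infty(\Omega_0)}=1$. The missing ingredient is boundary control. I need $u_\mu$ on $\partial\Omega_0$ to be $o(M_\mu)$, in order to conclude that the limit $v$ of $v_\mu$ (obtained via $C^{1,\alpha}$ compactness up to $\partial\Omega_0$, by the $C^2$ regularity of $\Omega_0$) vanishes on $\partial\Omega_0$. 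Then $v$ is a nonnegative solution of $\PucF(x,D^2v)+\lambda_1^+(\Omega_0)v=0$ with $\max v=1$ and $v=0$ on $\partial\Omega_0$. Simplicity of the principal eigenfunction gives $v=\phi_{\Omega_0}$, and since the limit is unique the whole family converges. Uniform blow-up on $\overline{\Omega_0}$ then follows from $v>0$ inside $\Omega_0$. On $\partial\Omega_0$ itself I would instead use Hopf together with the fact that $u_\mu$ is positive and bounded below near $\partial\Omega_0$; more likely I would argue on a slightly larger domain $\Omega_\delta$, using $\lambda_1^+(\Omega_\delta)\to\lambda_1^+(\Omega_0)$.

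\textbf{Main obstacle: the boundary control.} The plan is to show that $u_\mu$ is bounded on $\partial\Omega_\delta$ for fixed $\delta$, uniformly in $\mu$, using the Keller--Osserman local bound in $\{k>0\}$. That bound, however, degenerates as $\delta\to0$. The intended route is as follows. Normalize by $M^\delta_\mu=\|u_\mu\|_{L^\infty(\Omega_\delta)}$. On $\Omega_\delta$ the equation reads $\PucF+\mu v=k M^{p-1}v^p\ge0$. Any limit $w$ then satisfies $\PucF(x,D^2w)+\lambda_1^+(\Omega_0)w\ge0$, vanishes wherever $k>0$ (because the term $M^{p-1}$ blows up), and is positive. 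Hence $w$ is supported in $\overline{\Omega_0}$ and is a subsolution there with zero boundary data. By the characterization of $\lambda_1^+$ (the maximum principle holds below it, and at equality only multiples of $\phi$ survive), $w=\phi_{\Omega_0}$. Making the statement ``$w=0$ where $k>0$'' rigorous in the viscosity sense is the delicate point. I would handle it by testing against the integral-free estimate $w\le (C/M^{p-1})^{1/(p-1)}$, obtained from local supersolutions in balls where $k\ge c$.

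\textbf{Step 3: convergence outside $\Omega_0$.} By monotonicity and the local uniform bounds on compacts of $\Omega\setminus\overline{\Omega_0}$, $u_\mu$ increases to some $U$. The $C^{1,\alpha}$ estimates upgrade this to local uniform convergence, with $U$ solving \eqref{eq2} and $U=0$ on $\partial\Omega$. From Step 2 and monotonicity, $U=+\infty$ on $\partial\Omega_0$.

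For minimality, let $V$ be any solution of \eqref{eq2}. Since $\mu<\lambda_1^+(\Omega_0)$, each $u_\mu$ is a subsolution of the $\lambda_1^+(\Omega_0)$-problem in $\Omega\setminus\overline{\Omega_0}$. It is finite on $\partial\Omega_0$, whereas $V=\infty$ there, and both vanish on $\partial\Omega$. The comparison principle for the logistic nonlinearity on $\{k>0\}$, applied on $\Omega\setminus\overline{\Omega_0^\varepsilon}$ with $\varepsilon\to0$, then gives $u_\mu\le V$. Letting $\mu\to\lambda_1^+(\Omega_0)$ yields $U\le V$, so $U=\underline U$.
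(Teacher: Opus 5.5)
\textbf{Main gap: blow-up on $\partial\Omega_0$.} The missing step is $\min_{\partial\Omega_0}u_\mu\to+\infty$. This is part of the first claim, and your Step 3 depends on it. Your contradiction with nonexistence at $\mu=\lambda_1^+(\Omega_0)$ only gives $\sup_{\Omega_0}u_\mu\to\infty$. Even granting $u_\mu/M_\mu\to\phi_{\Omega_0}$, you only get divergence on compact subsets of $\Omega_0$, because $\phi_{\Omega_0}=0$ on $\partial\Omega_0$.

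Your suggestions for the boundary do not close this. Take a minimum point $x_\mu\in\partial\Omega_0$ of $u_\mu$ over $\overline{\Omega_0}$. A Hopf-type barrier inside $\Omega_0$ shows only that, if $u_\mu(x_\mu)$ stayed bounded, the inward normal derivative $\partial_\nu u_\mu(x_\mu)$ would be unbounded. Positivity or lower bounds for $u_\mu$ near $\partial\Omega_0$ are compatible with that, so there is no contradiction yet.

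The missing idea is a matching \emph{upper} bound on the same derivative from outside $\Omega_0$. The paper compares $u_\mu$ in $\Omega\setminus\overline{\Omega_0}$ with the solution $w_\mu$ of \eqref{eq3} having constant datum $u_\mu(x_\mu)$ on $\partial\Omega_0$ (Propositions \ref{exi} and \ref{comp}). Then $u_\mu\ge w_\mu$ with equality at $x_\mu$, hence $\partial_\nu u_\mu(x_\mu)\le\partial_\nu w_\mu(x_\mu)\le C_0$ by global $C^{1,\alpha}$ estimates for the uniformly bounded family $w_\mu$. This, together with the $C^1$ regularity of $u_\mu$ across $\partial\Omega_0$, is where \eqref{H} is really needed.

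Working on a larger $\Omega_\delta$ does not obviously help. In $\Omega_\delta\setminus\overline{\Omega_0}$ the function $u_\mu$ is only a \emph{sub}solution of the linear eigenvalue equation, so it is unclear how comparison with eigenfunctions of $\Omega_\delta$ would give a diverging lower bound on $\partial\Omega_0$.

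Consequently, the assertion in Step 3 that $U=+\infty$ on $\partial\Omega_0$ is unsupported. Your comparison $u_\mu\le V$ gives $U\le\underline U$. But without boundary blow-up, $U$ need not solve \eqref{eq2}, so $U=\underline U$ does not follow.

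\textbf{Secondary gap: \emph{boundary control} for $u_\mu/M_\mu$.} Keller--Osserman bounds degenerate as $k\to0$, so they give $w=0$ only on compact subsets of $\Omega\setminus\overline{\Omega_0}$. You assert, but do not prove, that the limit vanishes on $\partial\Omega_0$, i.e.\ that the maxima of the normalized functions do not drift to $\partial\Omega_0$. Also, $C^{1,\alpha}$ compactness up to $\partial\Omega_0$ is not available: neither the trace on $\partial\Omega_0$ nor the term $k u_\mu^p/M_\mu$ just outside $\Omega_0$ is controlled.

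The paper does not need $u_\mu=o(M_\mu)$ on $\partial\Omega_0$ a priori:
\begin{enumerate}
\item it takes a weak-$*$ limit $v$ of $u_\mu/\|u_\mu\|_\infty$ and identifies $v$ in $\Omega_0$ as a nonnegative solution of the eigenvalue equation;
\item it obtains $v=\tau\phi_{\Omega_0}\chi_{\Omega_0}$ from the Quaas--Sirakov argument applied to $t\phi_{\Omega_0}-v$, which needs no vanishing of $v$ on $\partial\Omega_0$;
\item it applies the local maximum principle to the subsolutions $u_\mu/\|u_\mu\|_\infty$ on small balls covering a tubular neighbourhood of $\partial\Omega_0$. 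This first excludes $v\equiv0$, then upgrades to uniform convergence, which forces $\tau=1$.
\end{enumerate}

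Your proof that $\sup_{\Omega_0}u_\mu\to\infty$ via nonexistence at $\lambda_1^+(\Omega_0)$ is otherwise a legitimate alternative to the paper's Step 1. For it to work, the uniform bound near $\partial\Omega_0$ from outside must come from comparison with the solution of \eqref{eq3} with constant datum, not from Keller--Osserman.
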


We wish to emphasize that the approach taken here, based on the comparison principle and the construction of barriers,  provides new proofs even for the classical case 
$\PucF=\Delta$ and naturally explains the role of the principal eigenvalue in the existence results.  
Of particular interest  is the construction of a supersolution in Lemma \ref{supso}.

In the next section we state the precise conditions on the operator, we recall the definition and main properties of the principal eigenvalue and we provide some key tools such as the maximum principle and the comparison principle.
In Section 3 we prove Theorem \ref{KW}  and the existence of the minimal and maximal blow-up solutions of problem \eqref{eq2}.
Finally, in Section 4 we prove the asymptotic results.

\section{Comparison principles and preliminaries}
\subsection{Hypotheses and notations}\label{Hypotheses and notations}
In the whole paper the operator $\PucF\in C(\Omega\times\mathcal{S}_N)$ will be uniformly elliptic, i.e. 
 $\exists \Lambda\geq\lambda>0$ such that
 \begin{equation}\label{hypop}\tag{F1}
\lambda  \tr P  \leq \PucF(x,X+P)- \PucF(x,X)\leq \Lambda\tr P
\end{equation}
 for all  $ x\in\Omega$ and any $X,P\in \mathcal{S}_N$,  $P\geq 0 $, $\mathcal{S}_N$ being the set of $N$-squared symmetric matrices.
 
 Furthermore we will suppose:
 \begin{equation}\label{hypop2}\tag{F2}
\PucF(x,tX)=t\PucF(x,X)\qquad \forall t\geq0,\,X\in\mathcal{S}_N
\end{equation}
and  there exists a modulus of continuity $\omega$ such that 
 \begin{equation}\label{hypop2'}\tag{F3}
\PucF(x,X)-\PucF(y,Y)\leq\omega(\alpha|x-y|^2+|x-y|)
\end{equation}
whenever $\alpha>0$, $x,y\in\Omega$, $X,Y\in\mathcal{S}_N$ and 
\begin{equation*}
-3\alpha\left(\begin{array}{cc}
I & 0\\
0 &  I
\end{array}\right)\leq
\left(\begin{array}{cc}
X & 0\\
0 &  -Y
\end{array}\right)\leq3\alpha\left(\begin{array}{cc}
I & -I\\
-I &  I
\end{array}\right).
\end{equation*}
Clearly, this   condition is trivially satisfied  whenever the operator does not depend  on the variable $x$.

In the following we will often use the distance function. Let us fix some notations.
We will use
$d_\Omega(x)$ to indicate the distance function to the boundary of $\Omega$ and, when no ambiguities arise, we will just write $d(x)$. For $\delta>0$, let us set $\Omega_\delta:=\{x\in\overline \Omega,\;:\, d_\Omega(x)< \delta\}$. Recall that if the $\Omega$ is $C^2$ so is $d_\Omega$ in $\Omega_\delta$ for $\delta$ sufficiently small and that $|\nabla d|=1$.

\medskip

Another key role is played by the extremal Pucci operators $\PucP$ and $\PucM$. Precisely, for $0< \lambda\leq  \Lambda$ given,  the Pucci's supremum operator is defined as
\begin{equation}\label{M+}
 \mathcal {M}^+ ( M) =\mathcal{M}^+_{\lambda, \Lambda} (M)= \Lambda \sum_{e_i\geq0}e_i+\lambda\, \sum_{e_i<0}e_i\, ,
 \end{equation}
where $e_1,\ldots,e_N$ are the eigenvalues of the matrix $M$ belonging to the space $\mathcal{S}_N$ of the $N$-squared symmetric matrices. Equivalently, $ \mathcal {M}^+$ can be defined as
$$ \mathcal {M}^+ ( M)=  \sup_{A\in {\mathcal A}_{\lambda,\Lambda}} {\rm tr} (AM)$$
where ${\mathcal A}_{\lambda,\Lambda}=\left\{A\in{\mathcal S}_N\,:\,\lambda \, I_N\leq A\leq \Lambda \, I_N\right\}$, $I_N$ being the identity matrix.

Symmetrically,  Pucci's infimum operator is defined as
 \begin{equation}\label{M-} 
 \mathcal {M}^- ( M) =\mathcal{M}^-_{\lambda, \Lambda} (M)= \lambda \sum_{e_i\geq0}\mu_i+\Lambda\, \sum_{e_i<0}e_i
=\inf_{A\in {\mathcal A}_{\lambda,\Lambda}} {\rm tr} (AM)\,.
\end{equation}
\subsection{Principal eigenvalues}
As mentioned in the introduction, we will use the principal eigenvalue \`a la Berestycki, Nirenberg Varadhan i.e.
$$\lambda_1^+(\Omega):=\sup\{\mu\in\R\ \mbox{such that}\ \exists \psi>0\ \mbox{and}\  \PucF(x,D^2\psi)+\mu \psi\leq 0 \  \mbox{in}\ \Omega \}.$$
Of course the functions $\psi$ considered in the definition of $\lambda_1^+$ are meant to be supersolutions in the viscosity sense.

The reason $\lambda_1^+$ is called an \lq\lq  eigenvalue\rq\rq\ is that 
it is possible to prove that there exists a function $\phi_\gO$, positive  in $\Omega$, called eigenfunction solution of 
 \begin{equation}\label{eq1.1}
 \left\{\begin{array}{cl}
 \PucF(x,D^2\phi_\gO)+\lambda_1^+(\Omega) \phi_\gO=0 & \mbox{in }\ \Omega\\
\phi_\gO=0 & \mbox{on }\ \partial\Omega,
\end{array}
\right.
 \end{equation}
see \cite{BD, QS}. The homogeneity condition \eqref{hypop2} is crucial for the existence of the eigenfunction $\phi_\gO$ and it implies that, without loss of generality, 
we can and will always choose $\phi_\gO$ such that $\|\phi_\gO\|_\infty=1$. 

We here recall the results concerning the relationship between the maximum principle and the \lq\lq principal eigenvalue\rq\rq.
\begin{proposition} \label{maxp} 
If $\mu<\lambda_1^+(\Omega)$ and  $u$ satisfies
$$
 \PucF(x,D^2u)+\mu u\geq 0\ \mbox{in }\ \Omega\ \mbox{and}\ 
u\leq 0 \ \mbox{on }\ \partial\Omega\, ,
$$
then $u\leq 0$.  
If $u\geq 0$ satisfies 
$$
 \PucF(x,D^2u)+ \lambda_1^+(\Omega)u\geq 0\ \mbox{in }\ \Omega\ \mbox{and}\ 
u\leq 0 \ \mbox{on }\ \partial\Omega\, ,
$$
then there exists $\tau\geq 0$ such that $u(x)=\tau\phi_\Omega(x)$
\end{proposition}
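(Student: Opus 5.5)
The plan follows the Berestycki–Nirenberg–Varadhan / Quaas–Sirakov scheme. The first part is the maximum principle below the principal eigenvalue; the second is the simplicity of $\lambda_1^+$ for subsolutions. Both rest on comparing with a positive strict supersolution through a "largest multiple" argument, so I would prove them in that order.

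\emph{First part.} Since $\mu<\lambda_1^+(\Omega)$, choose $\mu'\in(\mu,\lambda_1^+(\Omega))$. By the definition of $\lambda_1^+$ there is $\psi>0$ in $\Omega$ with $\PucF(x,D^2\psi)+\mu'\psi\le0$; monotonicity in $\mu$ is immediate since $\psi>0$. The first difficulty is that $\psi$ need not be bounded below near $\partial\Omega$. To get a supersolution positive on $\overline\Omega$, I would use the eigenvalue of a slightly larger smooth domain $\Omega'\supset\supset\Omega$. Continuity of $\lambda_1^+$ under domain perturbation, which I would take from \cite{BD,QS}, gives $\lambda_1^+(\Omega')>\mu$. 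Taking $\psi=\phi_{\Omega'}$ then yields $\psi\ge c>0$ on $\overline\Omega$ and $\PucF(x,D^2\psi)+\mu\psi\le -(\lambda_1^+(\Omega')-\mu)\psi<0$.

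Now suppose $u$ is positive somewhere and set $\tau^*=\inf\{\tau>0: \tau\psi\ge u \text{ in }\Omega\}$. This is finite and positive because $u$ is bounded above (a viscosity subsolution being upper semicontinuous on $\overline\Omega$) and $\psi\ge c$. Then $w=u-\tau^*\psi\le0$, and either $w$ vanishes at some interior point or its supremum is approached only at the boundary. The boundary case is excluded because $u\le 0<\tau^*\psi$ there. At an interior zero of $w$, the difference $u-\tau^*\psi$ satisfies, by \eqref{hypop}, \eqref{hypop2}, \eqref{hypop2'} and the standard Ishii–Lions argument, the inequality $\PucP(D^2w)+\mu w\ge (\lambda_1^+(\Omega')-\mu)\tau^*\psi>0$ in the viscosity sense. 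That is, $w$ is a strict subsolution touching its maximum $0$. Testing with the constant $0$ from above at the maximum point gives $0\ge(\lambda_1^+(\Omega')-\mu)\tau^*\psi>0$, a contradiction. Hence $u\le0$. The step producing the equation for the difference of a sub- and supersolution of an $x$-dependent operator is where \eqref{hypop2'} enters; I would cite the standard result, e.g.\ \cite{QS}.

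\emph{Second part.} Let $u\ge0$ be a subsolution at level $\lambda_1^+(\Omega)$ with $u\le0$ on $\partial\Omega$, and assume $u\not\equiv0$. Set $\tau^*=\sup\{\tau\ge 0: \phi_\Omega-\tau u>0 \text{ in } \Omega\}$, or equivalently the least $\tau$ with $\tau\phi_\Omega\ge u$. Its finiteness is the main obstacle: near $\partial\Omega$ both functions vanish, and one needs $u\le C\phi_\Omega$. I would obtain this from the Hopf lemma for $\phi_\Omega$ in the $C^2$ domain, which gives $\phi_\Omega\ge c\,d(x)$. Against this I would place a boundary barrier for $u$: since $\PucP(D^2u)\ge -\lambda_1^+ u\ge -C$, comparing with $C'd(x)-C''d(x)^2$ in $\Omega_\delta$ gives $u\le C d(x)$.

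Then $w=\tau^*\phi_\Omega-u\ge0$ satisfies $\PucM(D^2w)+\lambda_1^+(\Omega)w\le0$, hence $\PucM(D^2w)-\lambda_1^+(\Omega)^- w\le 0$. By the strong maximum principle either $w\equiv0$, giving the claim, or $w>0$ in $\Omega$. In the latter case the Hopf lemma gives $w\ge c\,d$, so $w\ge\varepsilon\phi_\Omega$ for some $\varepsilon>0$. This means $(\tau^*-\varepsilon)\phi_\Omega\ge u$, contradicting the minimality of $\tau^*$. If instead $u\equiv0$, take $\tau=0$.
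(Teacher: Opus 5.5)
Your second part is correct, and it follows a different route from the one the paper relies on. The paper gives no proof of Proposition \ref{maxp}. It defers to \cite[Theorem 4.1]{QS} and BNV. There the boundary is handled by the small-measure maximum principle coming from ABP, the same device the paper reproduces in Step 1 of the proof of Proposition \ref{comp}: one compares on a compact $K$ where the supersolution is bounded below, then disposes of $\Omega\setminus K$ by Corollary \ref{smalld}. You instead use the $C^2$ boundary. Hopf gives $\phi_\Omega\ge c\,d$, the barrier gives $u\le C\,d$, and after the strong minimum principle Hopf again gives $w\ge c\,d$. To pass from $w\ge c\,d$ to $w\ge\varepsilon\phi_\Omega$ you also need $\phi_\Omega\le C\,d$; the same barrier gives it, but say so. Your route is more elementary (no ABP). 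The BNV route, by contrast, works for an arbitrary positive supersolution on an arbitrary bounded domain, with no control of its behaviour at $\partial\Omega$. That generality is what the definition of $\lambda_1^+(\Omega)$, with $\psi>0$ only in $\Omega$, calls for.

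The weak point is the first part. You need some $\Omega'\supset\supset\Omega$ with $\lambda_1^+(\Omega')>\mu$. There is a minor issue: $\PucF$ is only given on $\Omega$, so you must first extend it while keeping \eqref{hypop}--\eqref{hypop2'}. The real problem is that exterior continuity of $\lambda_1^+$ is not an independent input. Its usual proof takes $\Omega_n\searrow\Omega$ and passes to the limit in the normalized $\phi_{\Omega_n}$. This produces a nonnegative, nontrivial solution vanishing on $\partial\Omega$ at level $\bar\lambda=\lim\lambda_1^+(\Omega_n)\le\lambda_1^+(\Omega)$. One must then rule out $\bar\lambda<\lambda_1^+(\Omega)$, and that is exactly your first assertion. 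So, as written, the first part leans on a consequence of itself. (Given such an $\Omega'$, your strict-subsolution argument is fine.)

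The detour is also unnecessary, because your second part does not use the first and already implies it:
\begin{itemize}
\item For $u$ as in the first assertion, $u^+=\max\{u,0\}$ is a nonnegative viscosity subsolution at level $\mu$.
\item Since $u^+\ge0$ and $\mu<\lambda_1^+(\Omega)$, it is also a subsolution at level $\lambda_1^+(\Omega)$, so your second part gives $u^+=\tau\phi_\Omega$.
\item If $\tau>0$, apply Proposition \ref{GV} to $\tau\phi_\Omega$, viewed as a subsolution at level $\mu$ and as a supersolution at level $\lambda_1^+(\Omega)$. The difference $0$ would satisfy $0=\PucP(0)\ge(\lambda_1^+(\Omega)-\mu)\tau\phi_\Omega>0$, which is impossible.
\item Hence $u\le0$.
\end{itemize}
So reverse the order: prove the simplicity statement first, then deduce the maximum principle from it. This removes both the domain-continuity input and the need to extend $\PucF$.
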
 
The proof goes back to \cite{BNV} in the linear case and we refer to \cite{BD,QS} for the fully nonlinear case. Let us remark that in \cite{QS} the operator is supposed to be convex.
The second result of Proposition \ref{maxp}  is a consequence of  \cite[Theorem 4.1]{QS}. 
In that Proposition, the hypothesis that the operator be convex is not needed. Indeed, the proof holds true also for non convex operators since the convexity of the operator is used only in formula (4.1) of that paper; but in formula (4.1), the left hand side can be substituted by the maximal Pucci operator $\PucP$ and then one can use the convexity of  $\PucP$.

We now recall the following comparison principle, proved in Theorem 3.6 in \cite{BD}.
\begin{proposition} \label{compF} Let $f,g\in C(\Omega)$ be such that $f\leq 0$ and $f\leq g$ in $\Omega$. If $\mu<\lambda_1^+(\Omega)$ and $v$ and $u$ are respectively super and subsolution of
$$ \PucF(x,D^2v)+\mu v\leq f,\  \PucF(x,D^2u)+\mu u\geq g\, \mbox{in}\ \Omega$$
such that $v\geq 0$ in $\Omega$ and $v\geq u$ on $\partial \Omega$, then $v\geq u$ in $\Omega$ if 
\begin{itemize}
\item either $f<0$ in $\Omega$ 
\item or  $g(\bar x)>0$ for any $\bar x$ such that $f(\bar x)=0$.
\end{itemize}
\end{proposition}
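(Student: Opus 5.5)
The plan is to argue by contradiction, following the classical doubling-of-variables argument for viscosity solutions, using the positivity of $v$ together with the hypothesis $\mu<\lambda_1^+(\Omega)$ to absorb the zero-order term.

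\textbf{Step 1: perturb $v$.} Since $\mu<\lambda_1^+(\Omega)$, the definition of $\lambda_1^+$ gives some $\mu'\in(\mu,\lambda_1^+(\Omega))$ and a function $\psi>0$ with $\PucF(x,D^2\psi)+\mu'\psi\le 0$ in $\Omega$. Following \cite{BD}, we may take $\psi=\phi$, the eigenfunction of a slightly larger smooth domain $\Omega'\supset\supset\Omega$ with $\lambda_1^+(\Omega')>\mu$. Then $\psi$ is bounded below by a positive constant on $\overline\Omega$.

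Suppose $\sup_\Omega(u-v)>0$ and consider the ratio $\tau=\sup_\Omega u/v$. This is the natural quantity because both inequalities are homogeneous.

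\textbf{Step 2: the case $v>0$ up to the boundary.} Assume first $v>0$ in $\overline\Omega$. If $\tau>1$, then $\tau v$ satisfies
\[
\PucF(x,D^2(\tau v))+\mu\tau v\le \tau f\le f ,
\]
which uses $f\le 0$ and $\tau\ge1$. Moreover $\tau v-u\ge0$ touches zero at an interior point $\bar x$, since $\tau v\ge u$ on $\partial\Omega$ strictly when $\tau>1$ and $v>0$.

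\textbf{Step 3: strong comparison at the touching point.} At $\bar x$, compare the subsolution $u$ with the supersolution $\tau v$. Doubling variables, using (F3) and the Ishii--Lions lemma, gives $g(\bar x)\le \tau f(\bar x)$ in the limit.
\begin{itemize}
\item If $f(\bar x)<0$, then $\tau f(\bar x)<f(\bar x)\le g(\bar x)$, a contradiction.
\item If $f(\bar x)=0$, then $g(\bar x)>0=\tau f(\bar x)$, again a contradiction.
\end{itemize}
Hence $\tau\le1$, that is, $u\le v$.

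\textbf{Step 4: the general case $v\ge0$.} Replace $v$ by $v_\epsilon=v+\epsilon\psi$, which is strictly positive on $\overline\Omega$. It satisfies
\[
\PucF(x,D^2v_\epsilon)+\mu v_\epsilon\le f+\epsilon(\mu-\mu')\psi<f ,
\]
using the sub-additivity $\PucF(X+Y)\le\PucF(X)+\PucM$-type bounds from (F1). This requires care, since $\PucF$ is not additive: the term with $\psi$ should be controlled through $\PucP(D^2\psi)$. One can either construct $\psi$ as an eigenfunction of $\PucP$ rather than of $\PucF$, or use that $\lambda_1^+$ for $\PucP$ on $\Omega'$ can still exceed $\mu$ by taking $\Omega'$ close to $\Omega$; I would check that the extremal eigenvalues suffice. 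Applying Steps 2 and 3 to $v_\epsilon$ gives $u\le v_\epsilon$, and letting $\epsilon\to0$ gives $u\le v$.

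\textbf{Main obstacle.} The hardest step is Step 4: justifying the perturbation $v+\epsilon\psi$ for a nonlinear $\PucF$ within the viscosity framework, and making sure the resulting eigenvalue gap is positive. The first thing I would try is to build $\psi$ from the $\PucP$-eigenfunction on a slightly enlarged domain.
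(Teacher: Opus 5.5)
The paper does not prove this statement; it quotes Theorem 3.6 of \cite{BD}, so your argument has to stand on its own. Steps 2--3 are sound. When $v>0$ on $\overline\Omega$ the ratio argument works. The contact inequality $g(\bar x)\le\tau f(\bar x)$ follows directly from Proposition \ref{GV}: $w=u-\tau v$ satisfies $\PucP(D^2w)+\mu w\ge g-\tau f$ and has an interior maximum $0$ at $\bar x$, so you can test with $|x-\bar x|^4$. This case already covers the only place the paper uses the proposition (Corollary \ref{compmu}, where $v-\ep>0$ on $\overline\Omega$).

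\textbf{The gap is Step 4}, i.e.\ exactly the case where $v\ge0$ vanishes somewhere on $\partial\Omega$, and it cannot be closed as you propose. Hypothesis (F1) only gives $\PucF(x,X+Y)\le\PucF(x,X)+\PucP(Y)$. So to make $v+\ep\psi$ a strict supersolution you need $\psi>0$ on $\overline\Omega$ with $\PucP(D^2\psi)+\mu'\psi\le0$ for some $\mu'>\mu$ (and $\psi\in C^2$ if you test $v$ with $\varphi-\ep\psi$). That forces $\mu<\lambda_1^+(\PucP,\Omega)$, which can lie far below $\lambda_1^+(\PucF,\Omega)$. For $\PucF=\PucM$ with $\lambda<\Lambda$, comparison with the Laplacian gives
$\lambda_1^+(\PucP,\Omega)\le\lambda\,\lambda_1(-\Delta)<\Lambda\,\lambda_1(-\Delta)\le\lambda_1^+(\PucM,\Omega)$,
where $\lambda_1(-\Delta)$ is the Dirichlet eigenvalue of $-\Delta$ in $\Omega$. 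Taking $\Omega'$ close to $\Omega$ does not help, since $\lambda_1^+(\PucP,\cdot)$ is nonincreasing under enlargement of the domain. Using the $\PucF$-eigenfunction of $\Omega'$ from Step 1 also fails: it is only a viscosity solution, and for instance $\PucM(X+Y)\ge\PucM(X)+\PucM(Y)$ goes the wrong way, so $v+\ep\phi_{\Omega'}$ need not be a supersolution. For $\mu\in[\lambda_1^+(\PucP,\Omega),\lambda_1^+(\PucF,\Omega))$ your Step 4 gives nothing.

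\textbf{The missing idea} is to control the boundary layer with the maximum principle in small domains instead of perturbing $v$. This is the Berestycki--Nirenberg--Varadhan device the paper uses in Step 1 of the proof of Proposition \ref{comp}.

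Since $\PucM(D^2v)-\mu^-v\le0$ and $v\ge0$, either $v\equiv0$ or $v>0$ in $\Omega$. If $v\equiv0$, then $f\equiv0$, so the second alternative gives $g>0$ and Proposition \ref{maxp} yields $u\le0$. This is where $\mu<\lambda_1^+(\Omega)$ is actually needed.

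If $v>0$ in $\Omega$, take a compact $K\subset\Omega$ with $|\Omega\setminus K|$ below the threshold of Corollary \ref{smalld}. For $t\ge1$, Proposition \ref{GV} gives $\PucP(D^2(u-tv))+\mu(u-tv)\ge g-tf\ge g-f\ge0$, using $f\le0$. Also $u-tv\le(1-t)v\le0$ on $\partial\Omega$. Hence $u\le tv$ on $K$ implies $u\le tv$ in $\Omega$. Because $\min_K v>0$, the number $\tau=\inf\{t\ge1:\ u\le tv \text{ in } \Omega\}$ is finite. If $\tau>1$, there are two possibilities:
\begin{itemize}
\item $u-\tau v$ vanishes at an interior point, which your Step 3 excludes;
\item $\max_K(u-\tau v)<0$, so $u\le(\tau-\ep)v$ on $K$ and hence in $\Omega$, contradicting the definition of $\tau$.
\end{itemize}
Thus $\tau=1$, i.e.\ $u\le v$.
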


A consequence of Proposition \ref{compF} is
\begin{corollary}\label{compmu}
Let $v$ and $u$ be respectively super and subsolution of
$$ \PucF(x,D^2v)+\mu v\leq 0,\  \PucF(x,D^2u)+\mu u\geq 0\, \mbox{in}\ \Omega.$$
Suppose that  $\mu>0$ and $\mu<\lambda_1^+(\Omega)$.
If $v>0$ in $\Omega$ and $v\geq u>0$ on $\partial\Omega$ then $v\geq u$ in $\Omega$.
\end{corollary}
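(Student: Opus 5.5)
The plan is to derive Corollary \ref{compmu} from Proposition \ref{compF} by a scaling trick. Since $u>0$ on $\partial\Omega$ and $v\geq u$ there, we perturb $v$ multiplicatively so that the supersolution inequality becomes strict. Fix $t>1$ and set $v_t:=t\,v$. By the homogeneity \eqref{hypop2}, $v_t$ satisfies $\PucF(x,D^2v_t)+\mu v_t\leq 0$ in $\Omega$. This is not yet strict, so instead we shift the parameter. Pick $\mu'$ with $\mu<\mu'<\lambda_1^+(\Omega)$. Then
$$\PucF(x,D^2v_t)+\mu' v_t\leq (\mu'-\mu)v_t=:f\ldots$$
This has the wrong sign. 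So we shift the subsolution's parameter instead. Write both inequalities with the same $\mu$ and take
$$f:=0,\qquad g:=0 .$$
Proposition \ref{compF} requires either $f<0$ or $g>0$ wherever $f=0$. To obtain this, use $\mu>0$ and $u>0$.

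The concrete route is as follows. For $\varepsilon\in(0,1)$ consider $u_\varepsilon:=(1-\varepsilon)u$. By \eqref{hypop2}, $u_\varepsilon$ is a subsolution of $\PucF(x,D^2u_\varepsilon)+\mu u_\varepsilon\geq 0$, and $v>u_\varepsilon$ on $\partial\Omega$ strictly, because $u>0$ there. The strict boundary inequality alone does not fit Proposition \ref{compF}, so we modify the operator parameter. Choose $\mu_\varepsilon:=\mu(1-\varepsilon)<\mu$. Then $v$ satisfies
$$\PucF(x,D^2v)+\mu_\varepsilon v\leq -\varepsilon\mu v=:f<0\quad\text{in }\Omega,$$
using $\mu>0$ and $v>0$. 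The function $u$ satisfies $\PucF(x,D^2u)+\mu_\varepsilon u\geq -\varepsilon\mu u=:g$. Since $u$ need not be positive inside, $f\leq g$ is not automatic, which forces a case split. On the set where $u\leq 0$ there is nothing to prove. To handle this, apply the comparison on the open set $\Omega'=\{u>\ldots\}$; it is simpler to replace $u$ by $w:=\max(u,0)$, which is still a subsolution. For $w$ we have $\PucF(x,D^2w)+\mu_\varepsilon w\geq -\varepsilon\mu w$, by the max of subsolutions, with $0$ being a subsolution of the right-hand side. We need $f=-\varepsilon\mu v\leq g=-\varepsilon\mu w$, that is $v\geq w$, which is circular.

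To break the circularity, we use the first bullet of Proposition \ref{compF} with $g$ free. Take instead $f=-\varepsilon\mu v$ and apply the proposition to the pair $(v,\,w)$ with $g:=-\varepsilon\mu\, \|w\|_\infty$-type constants. This is the main obstacle: arranging $f\leq g$ with $f<0$. The first attempt is to use the rescaled supersolution $Tv$ with $T$ large, which satisfies $\PucF(x,D^2(Tv))+\mu_\varepsilon Tv\leq -\varepsilon\mu Tv$. Let $T^*=\inf\{T\geq 1: Tv\geq w \text{ in }\Omega\}$, which is finite if $v$ is bounded below away from zero near the boundary and $w$ is bounded. Suppose $T^*>1$. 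Then on $\partial\Omega$ we have $T^*v>w$ strictly, and the strong maximum principle or a touching argument at an interior contact point contradicts strict supersolution versus subsolution: at the contact point $x_0$ we have $T^*v(x_0)=w(x_0)>0$, and the viscosity inequalities give $-\varepsilon\mu T^* v(x_0)\geq \PucF+\mu_\varepsilon\cdot\ge -\varepsilon\mu w(x_0)\cdot(1-\ldots)$. We would make this rigorous by applying Proposition \ref{compF} to $T^*v$ and $w$ with $f=-\varepsilon\mu T^*v$ and $g=-\varepsilon\mu w$, for which $f\leq g$ holds since $T^*v\geq w$. This yields $T^*v\geq w$ and, via the strictness, a contradiction with the minimality of $T^*$, giving $T^*\le 1$ and hence $v\geq u$. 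Letting $\varepsilon\to0$ is then unnecessary.
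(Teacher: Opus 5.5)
Your argument has a genuine gap at the final step: the contradiction with the minimality of $T^*$ is never actually obtained. Applying Proposition~\ref{compF} to $T^*v$ and $w$ with $f=-\varepsilon\mu T^*v$, $g=-\varepsilon\mu w$ only returns $T^*v\ge w$. You already have this from the definition of $T^*$, since the set of admissible $T$ is closed under pointwise limits. Proposition~\ref{compF} has a non-strict conclusion: the hypothesis $f<0$ is what makes the comparison work, but it produces no margin $T^*v-w\ge c>0$. Without such a margin you cannot replace $T^*$ by $T^*-\delta$.

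The touching heuristic fails for the same reason. At a contact point $x_0$ you have $T^*v(x_0)=w(x_0)$, hence $f(x_0)=g(x_0)$, so the two viscosity inequalities are compatible there and nothing is contradicted. (Two merely semicontinuous functions also cannot be compared through second derivatives at a contact point without a doubling-of-variables argument like Proposition~\ref{GV}.) The root cause is your shift $\mu\mapsto\mu(1-\varepsilon)$: it makes both right-hand sides proportional to the unknowns, so $f\le g$ is literally the conclusion $v\ge u$. The $T^*$ device hides this circularity but does not remove it.

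The paper avoids the circularity by perturbing $v$ additively. Set $v_\varepsilon=v-\varepsilon$, which is nonnegative for small $\varepsilon$ because $v$ is bounded below by a positive constant on $\overline\Omega$. Since $\mu>0$, it satisfies $\PucF(x,D^2v_\varepsilon)+\mu v_\varepsilon\le-\mu\varepsilon<0$. Meanwhile $u/(1+\gamma_\varepsilon)$ satisfies the inequality with $g=0$ by homogeneity. Now $f=-\mu\varepsilon\le0=g$ holds trivially. The boundary inequality $v-\varepsilon\ge u/(1+\gamma_\varepsilon)$ is restored using $\min_{\partial\Omega}u>0$. Proposition~\ref{compF} then applies, and one lets $\varepsilon\to0$.

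Alternatively, you can complete your own sliding argument, working with the original $\mu$ (your $\varepsilon$ and $\mu_\varepsilon$ then play no role), by inserting a strong maximum principle. By Proposition~\ref{GV}, $z=w-T^*v\le0$ satisfies $\PucP(D^2z)+\mu z\ge0$, hence $\PucP(D^2z)\ge0$ because $\mu>0$. So either $z\equiv0$ or $z<0$ in $\Omega$; this is also Proposition~\ref{strongcomp} with $k\equiv0$. The case $z\equiv0$ is excluded because $z<0$ on $\partial\Omega$ when $T^*>1$. Since $z$ is upper semicontinuous on $\overline\Omega$, you get $\max_{\overline\Omega}z<0$. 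With $v$ bounded, this gives $(T^*-\delta)v\ge w$ for small $\delta>0$, the contradiction you wanted. This is the Berestycki--Nirenberg--Varadhan route, and it does not even need Proposition~\ref{compF}.
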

\begin{proof} 
Consider $\varepsilon>0$ small enough and $\gamma_\ep$ such that on $\partial\Omega$, $v_\ep=v-\ep>u_{\gamma_\ep}=\frac{u}{1+\gamma_\ep}$.
On the other hand, $u_\ep$ and $v_{\gamma_\ep}$ satisfy respectively
$$ \PucF(x,D^2v_\ep)+\mu v_\ep\leq -\mu\ep<0\quad\,\text{in $\Omega$}$$
and
$$ \PucF(x,D^2u_{\gamma_\ep})+\mu u_{\gamma_\ep}\geq 0\quad\,\text{in $\Omega$}.$$
 Hence we are in the hypothesis of Proposition \ref{compF} and
$v_\ep\geq u_{\gamma_\ep}$ in $\Omega$. Letting $\ep\rightarrow 0$ we get the required result.
\end{proof}

\subsection{Comparison principles}
The main goal of this subsection is to prove the following comparison principle.
\begin{proposition} \label{comp}
Let $u_1>0$ and $u_2\geq 0$ in $\Omega$ be satisfying 
$$ \PucF(x,D^2u_1)+\mu u_1\leq k(x)u_1^p, \ \    \PucF(x,D^2u_2)+\mu u_2\geq k(x)u_2^p, \ \mbox{in }	 \Omega$$
with $k(x)\geq 0$ not identically zero. 

If $u_1\geq u_2$ on $\partial\Omega$ then $u_1\geq u_2$ in $\Omega$.
\end{proposition}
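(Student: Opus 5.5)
Since $\mu$ is arbitrary (possibly larger than $\lambda_1^+(\Omega)$), Proposition \ref{compF} cannot be applied directly. We use the classical scaling argument for logistic nonlinearities: the map $t\mapsto k(x)t^{p-1}$ is nondecreasing, so dilating the supersolution improves its inequality. Assume by contradiction that $u_2>u_1$ somewhere in $\Omega$. For $\tau\geq1$ set $w_\tau=\tau u_1$. By \eqref{hypop2},
$$\PucF(x,D^2w_\tau)+\mu w_\tau\leq \tau k(x)u_1^p=\tau^{1-p}k(x)w_\tau^p\leq k(x)w_\tau^p,$$
with strict gain $(1-\tau^{1-p})k(x)w_\tau^p$ whenever $\tau>1$. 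Define
$$\tau^*=\inf\{\tau\geq1:\ \tau u_1\geq u_2 \text{ in }\Omega\}.$$
Before using $\tau^*$ we must check that it is finite, i.e.\ that $u_2/u_1$ is bounded. In the interior this follows from $u_1>0$ and the continuity of $u_1,u_2$. Near $\partial\Omega$ one can use a Hopf-type lemma for the supersolution $u_1$ of $\PucM(D^2u_1)-C u_1\leq 0$, which gives $u_1\geq c\,d(x)$, together with a Lipschitz bound $u_2\leq C d(x)$ when $u_2=0$ on $\partial\Omega$. If instead $u_1>0$ on $\partial\Omega$, boundedness is immediate. To avoid this technicality altogether, I would first perturb: replace $u_1$ by $u_1+\ep$ (note that $\PucF(x,D^2(u_1+\ep))+\mu(u_1+\ep)\le k u_1^p+\mu\ep$, which needs handling), or more simply work on $\{x\in\Omega:\ d(x)>\delta\}$ and let $\delta\to0$.

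The core step is to show $\tau^*=1$. Suppose $\tau^*>1$. Then $w=\tau^*u_1\geq u_2$ in $\Omega$, and since $w\geq \tau^* u_2>u_2$ on $\partial\Omega$ wherever $u_2>0$, the two functions touch at an interior point $x_0$, that is, $w(x_0)=u_2(x_0)>0$. Near $x_0$ we then have
$$\PucF(x,D^2w)+\mu w-k w^p\le -(1-(\tau^*)^{1-p})k w^p,$$
while $u_2$ satisfies the reverse inequality with $0$ on the right. At the touching point, the viscosity comparison (doubling of variables with \eqref{hypop2'}) yields $0\le -(1-(\tau^*)^{1-p})k(x_0)w(x_0)^p$. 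This is a contradiction provided $k(x_0)>0$.

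The main obstacle is the case $k(x_0)=0$, for instance inside $\Omega_0$. To handle it, I consider the open set $D=\{k=0\}$ near the touching set, or more robustly apply the strong maximum principle to $w-u_2\geq0$. This function satisfies $\PucM(D^2(w-u_2))-|\mu|(w-u_2)\leq0$, obtained by subtracting the two inequalities and using that $k(w^p-u_2^p)\ge0$ has the right sign. Since $w-u_2$ vanishes at $x_0$, it vanishes identically on the connected domain $\Omega$. Then $w=u_2$ everywhere, and plugging back gives $(1-(\tau^*)^{1-p})k\,w^p\le0$, contradicting $k\not\equiv0$ and $w>0$. Hence $\tau^*=1$, which means $u_1\ge u_2$.

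Finally, I would verify that subtracting viscosity sub- and supersolutions is legitimate. This holds for $\PucF$ under \eqref{hypop2'}: the difference of a sub- and a supersolution of the respective equations is a viscosity subsolution of the Pucci inequality, in the spirit of the standard results of Caffarelli and Cabré and of Ishii and Lions.
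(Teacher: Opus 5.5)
\textbf{Main gap: interior touching.} The gap is the sentence asserting that $\tau^*u_1$ and $u_2$ touch at an interior point. Minimality of $\tau^*$ only gives, for each $\tau<\tau^*$, a point $x_\tau\in\Omega$ with $\tau u_1(x_\tau)<u_2(x_\tau)$. Your boundary remark stops these points from accumulating where $u_2>0$ on $\partial\Omega$. It does not stop them from accumulating at points of $\partial\Omega$ where $u_1=u_2=0$. For zero Dirichlet data, which is the main case in which the proposition is used, this is the whole boundary. Then $\min_{\overline\Omega}(\tau^*u_1-u_2)=0$ is attained on $\partial\Omega$, and your remark says nothing. In that scenario the strong maximum principle only yields $\tau^*u_1>u_2$ in $\Omega$, which does not contradict the definition of $\tau^*$. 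Closing this needs a real boundary argument, e.g.\ a Hopf lemma for $\tau^*u_1-u_2$ combined with $u_2\le Cd$, and the proposal does not supply one.

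\textbf{Finiteness of $\tau^*$.} This step has the same problem. Your Hopf/Lipschitz argument covers only $u_2\equiv 0$ on $\partial\Omega$ or $u_1>0$ on $\partial\Omega$. It does not cover mixed boundary data, which the paper needs, e.g.\ in Corollary \ref{bdryest} and Proposition \ref{exi}. Both proposed shortcuts fail. On $\{d>\delta\}$ you do not know $u_1\ge u_2$ on $\{d=\delta\}$; that is the conclusion itself. And $u_1+\ep$ is not a supersolution when $\mu>0$.

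\textbf{The missing tool.} The paper proceeds via the maximum principle in domains of small measure, Corollary \ref{smalld}. It scales the subsolution down, $tu_2$ with $t\le 1$, which is equivalent to your scaling. Fix a compact $K\subset\Omega$ with $|\Omega\setminus K|\le\delta$. Suppose $\tau\ge 1$ and $\tau u_1\ge u_2$ on $K$, and set $z=u_2-\tau u_1$. On $\{x\in\Omega\setminus K:\ u_2>\tau u_1\}$, Proposition \ref{GV} gives
\[
\PucP(D^2z)+\mu z\ge k\bigl(u_2^p-(\tau u_1)^p\bigr)\ge 0 .
\]
Moreover $z\le 0$ on the boundary of this set; on $\partial\Omega$ this holds because $\tau u_1\ge u_1\ge u_2$. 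Hence the set is empty.
\begin{itemize}
\item With $\tau$ large, so that the ordering holds on $K$ by compactness, this gives $\tau^*<\infty$.
\item With $\tau=\tau^*-\ep$, applied after the strong comparison has given $\tau^*u_1-u_2>0$ in $\Omega$ (hence bounded below by a positive constant on $K$), it contradicts the minimality of $\tau^*$.
\end{itemize}
No Hopf lemma, Lipschitz bound or restriction on the boundary data is needed.

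\textbf{Sign slip.} Subtracting the two inequalities gives
\[
\PucM(D^2(w-u_2))+\mu(w-u_2)\le k(w^p-u_2^p),
\]
and the right-hand side is nonnegative, which is the wrong sign. Use $w^p-u_2^p\le p\|w\|_\infty^{p-1}(w-u_2)$ instead to reach $\PucM(D^2(w-u_2))-C(w-u_2)\le 0$.
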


In order to do so we will use a few classical results that will be useful throughout the paper.
The first concerns difference of sub and supersolutions (see e.g. \cite{CC} and \cite{GV}) for which we will recall the proof in our setting,
 the other is the famous  Alexandrov-Bakelman-Pucci estimate for fully non linear operators see e.g. \cite{CC}, whose proof is not recalled here, and its consequence i.e. the maximum principle in small domain. The last ingredient for the proof of Proposition \ref{comp} is a strong comparison principle which is certainly expected and somehow known but, for completeness sake and convenience of the reader, we will give its proof.

\begin{proposition}\label{GV} Let $u\in USC(\Omega)$ and $v\in LSC(\Omega)$ be respectively viscosity subsolution and supersolution  of
$$\PucF(x,D^2u)=f(x,u)\quad\;\text{in $\Omega$}$$
and
$$\PucF(x,D^2v)=g(x,v)\quad\;\text{in $\Omega$, }$$
where $f,g\in C(\Omega\times\mathbb R)$  and $\PucF$ satisfies \eqref{hypop}, \eqref{hypop2'}. Then the difference $w = u- v$ is a
viscosity solution of the maximal differential inequality
$$\PucP (D^2w)\geq f(x,u)-g(x,v)\quad\;\text{in $\Omega$.}$$
\end{proposition}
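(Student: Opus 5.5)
The plan is the standard doubling-of-variables argument from the theory of viscosity solutions, as in \cite{CC} and the Ishii--Lions framework, with the structure condition \eqref{hypop2'} absorbing the $x$-dependence. Fix $x_0\in\Omega$ and a test function $\varphi\in C^2$ such that $w-\varphi=u-v-\varphi$ has a strict local maximum at $x_0$. Without loss of generality this maximum is zero and strict in a closed ball $\overline B_r(x_0)\subset\Omega$. We must show
$$\PucP(D^2\varphi(x_0))\geq f(x_0,u(x_0))-g(x_0,v(x_0)).$$

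For $\alpha>0$ we consider the doubled function
$$\Phi_\alpha(x,y)=u(x)-v(y)-\varphi(x)-\frac{\alpha}{2}|x-y|^2$$
on $\overline B_r\times\overline B_r$. It is upper semicontinuous, so it attains its maximum at some $(x_\alpha,y_\alpha)$. By the standard penalization lemma, strictness of the maximum gives, as $\alpha\to\infty$,
$$x_\alpha,y_\alpha\to x_0,\qquad \alpha|x_\alpha-y_\alpha|^2\to0,\qquad u(x_\alpha)\to u(x_0),\qquad v(y_\alpha)\to v(x_0).$$
The convergence of the values uses semicontinuity together with the limit of the maximum values.

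Next we apply the Theorem of Sums (Crandall--Ishii--Lions) to $u(x)-\varphi(x)$ and $v(y)$ with penalty $\frac\alpha2|x-y|^2$. This produces matrices $X,Y$ with
$$(D\varphi(x_\alpha)+\alpha(x_\alpha-y_\alpha),\,X+D^2\varphi(x_\alpha))\in\overline J^{2,+}u(x_\alpha),\qquad (\alpha(x_\alpha-y_\alpha),\,Y)\in\overline J^{2,-}v(y_\alpha),$$
and $X,Y$ satisfy exactly the matrix inequality required in \eqref{hypop2'}, namely $-3\alpha I\le \mathrm{diag}(X,-Y)\le 3\alpha\bigl(\begin{smallmatrix}I&-I\\-I&I\end{smallmatrix}\bigr)$. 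To get this we apply the theorem to $u-\varphi$, which is a subsolution with shifted Hessian, and choose the usual parameter $\varepsilon=1/\alpha$. In particular $X\le Y$.

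The viscosity inequalities then read
$$\PucF(x_\alpha,X+D^2\varphi(x_\alpha))\geq f(x_\alpha,u(x_\alpha)),\qquad \PucF(y_\alpha,Y)\le g(y_\alpha,v(y_\alpha)).$$
We split the difference of the operator values as
$$\PucF(x_\alpha,X+D^2\varphi)-\PucF(x_\alpha,X)+\bigl[\PucF(x_\alpha,X)-\PucF(y_\alpha,Y)\bigr].$$
The first difference is at most $\PucP(D^2\varphi(x_\alpha))$ by \eqref{hypop}, since uniform ellipticity gives $\PucF(x,M+P)-\PucF(x,M)\le\PucP(P)$ for every symmetric $P$. The bracket is at most $\omega(\alpha|x_\alpha-y_\alpha|^2+|x_\alpha-y_\alpha|)$ by \eqref{hypop2'}. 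Combining these,
$$f(x_\alpha,u(x_\alpha))-g(y_\alpha,v(y_\alpha))\le \PucP(D^2\varphi(x_\alpha))+\omega(\alpha|x_\alpha-y_\alpha|^2+|x_\alpha-y_\alpha|).$$
Letting $\alpha\to\infty$, the continuity of $f,g,\varphi$ and the convergences above give the claim.

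The main obstacle is making sure the semijet points really converge. This requires $u(x_\alpha)\to u(x_0)$ and $v(y_\alpha)\to v(x_0)$, which follows from the strict maximum and semicontinuity. It also requires the fine estimate $\alpha|x_\alpha-y_\alpha|^2\to0$, which is exactly what \eqref{hypop2'} needs. A further technical point is that $u,v$ are only semicontinuous and possibly unbounded near $\partial\Omega$. This is handled by localizing to the ball $\overline B_r$, where they are bounded above and below respectively.
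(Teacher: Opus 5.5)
Your proof is correct and follows the paper's argument essentially step by step. You double variables with the penalty $\frac\alpha2|x-y|^2$ on $\overline{B_r(x_0)}\times\overline{B_r(x_0)}$, use the Crandall--Ishii--Lions lemma and theorem of sums (with $\varepsilon=1/\alpha$, which gives exactly the matrix bound in \eqref{hypop2'}), split $\PucF(x_\alpha,X+D^2\varphi)-\PucF(y_\alpha,Y)$ via \eqref{hypop} and \eqref{hypop2'}, and let $\alpha\to\infty$ using $u(x_\alpha)\to u(x_0)$, $v(y_\alpha)\to v(x_0)$ and $\alpha|x_\alpha-y_\alpha|^2\to0$, just as the paper does.
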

\begin{proof}
Let $x_0\in\Omega$ and let $\varphi\in C^2(\Omega)$ be such that 
$$(u-v)(x_0)=\varphi(x_0)\;\quad\text{and}\;\quad (u-v)(x)<\varphi(x)\quad\forall x\in\Omega\backslash\left\{x_0\right\}.$$ 
Fix $r>0$ sufficiently small such that $\overline {B_{r}(x_0)}\subset\Omega$. For $\alpha>0$, let $x_\alpha,y_\alpha\in \overline {B_{r}(x_0)}$ be such that 
\begin{equation}\label{8geneq1}
\max_{ \overline {B_{r}(x_0)}\times\overline {B_{r}(x_0)}}\left((u-\varphi)(x)-v(y)-\frac\alpha2|x-y|^2\right)=(u-\varphi)(x_\alpha)-v(y_\alpha)-\frac\alpha2|x_\alpha-y_\alpha|^2.
\end{equation}
In view of \cite[Lemma 3.1]{CIL} one has  $\displaystyle\lim_{\alpha\to+\infty}\alpha|x_\alpha-y_\alpha|^2=0$ and,  using the strict inequality in \eqref{8geneq1},  we may further assume, up to extracting a subsequence,  that $\displaystyle\lim_{\alpha\to+\infty}x_\alpha=\displaystyle\lim_{\alpha\to+\infty}y_\alpha=x_0$. Moreover 
\begin{equation*}
\begin{split}
v(x_0)\leq\liminf_{\alpha\to+\infty}v(y_\alpha)&\leq\limsup_{\alpha\to+\infty}v(y_\alpha)\\&\leq\limsup_{\alpha\to+\infty}(u-\varphi)(x_\alpha)\leq (u-\varphi)(x_0)=v(x_0)\\
v(x_0)\leq\liminf_{\alpha\to+\infty}v(y_\alpha)&\leq\liminf_{\alpha\to+\infty}(u-\varphi)(x_\alpha)\\
&\leq\limsup_{\alpha\to+\infty}(u-\varphi)(x_\alpha)\leq (u-\varphi)(x_0)=v(x_0).
\end{split}
\end{equation*}
The above inequalities imply that 
\begin{equation}\label{8geneq2}
\lim_{\alpha\to+\infty}v(y_\alpha)=v(x_0)\;\quad\text{and}\;\quad\lim_{\alpha\to+\infty}u(x_\alpha)=u(x_0).
\end{equation}
By \cite[Theorem 3.2]{CIL}, there exist $X_\alpha,Y_\alpha\in\mathcal{S}_N$ such that
\begin{equation*}-3\alpha\left(\begin{array}{cc}
I & 0\\
0 &  I
\end{array}\right)\leq
\left(\begin{array}{cc}
X_\alpha & 0\\
0 &  -Y_\alpha
\end{array}\right)\leq3\alpha\left(\begin{array}{cc}
I & -I\\
-I &  I
\end{array}\right)
\end{equation*}
 and 
\begin{equation*}
\left(\alpha(x_\alpha-y_\alpha)+D\varphi(x_\alpha),X_\alpha+D^2\varphi(x_\alpha)\right)\in\overline J^{2,+}u(x_\alpha)\;,\;\;\left(\alpha(x_\alpha-y_\alpha),Y_\alpha\right)\in\overline J^{2,-}v(x_\alpha).
\end{equation*}
Using the sub and supersolution properties of $u$ and $v$ and \eqref{hypop},\eqref{hypop2'}, we obtain
\begin{equation}\label{8genpeq1}
\begin{split}
f(x_\alpha,u(x_\alpha))-g(y_\alpha,v(y_\alpha))&\leq \PucF(x_\alpha,X_\alpha+D^2\varphi(x_\alpha))-\PucF(y_\alpha,Y_\alpha)\\
&\leq \PucP (D^2\varphi(x_\alpha))+\PucF(x_\alpha,X_\alpha)-\PucF(y_\alpha,Y_\alpha)\\
&\leq \PucP (D^2\varphi(x_\alpha))+\omega(\alpha|x_\alpha-y_\alpha|^2+|x_\alpha-y_\alpha|).
\end{split}
\end{equation}
By letting $\alpha\to+\infty$ and using \eqref{8geneq2}, we conclude that
$$
f(x_0,u(x_0))-g(x_0,v(x_0))\leq\PucP (D^2\varphi(x_0)).$$ 
\end{proof}

We now recall the Alexandroff-Bakelman-Pucci estimate for viscosity subsolutions, see e.g. \cite[Chapter 3]{CC}.

\begin{theorem}\label{abp}

Suppose that $u$ is a viscosity solution of 
$${\mathcal M^+}(D^2u)\geq f\ \mbox{in}\ \Omega,$$
then there exists $C=C(N,\lambda,\Lambda,\rm{diam}\,\Omega)$ such that
$$\sup_{\Omega} u\leq \sup_{\partial\Omega}u+C\|f^-\|_{L^N(\Omega)}.$$
\end{theorem}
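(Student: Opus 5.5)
The plan is the classical Alexandroff–Bakelman–Pucci argument via the concave envelope, following \cite[Chapter 3]{CC}. Since $\PucP$ is invariant under adding constants to $u$, I will replace $u$ by $u-\sup_{\partial\Omega}u$ and assume $\sup_{\partial\Omega}u= 0$. The claim then becomes $\sup_\Omega u^+\leq C\|f^-\|_{L^N(\Omega)}$, and I may assume $M:=\sup_\Omega u>0$. Let $d=\mathrm{diam}\,\Omega$ and fix a ball $B_d\supset\Omega$. I extend $u^+$ by $0$ to $B_{2d}$. Let $\Gamma$ be the concave envelope of this extension in $B_{2d}$, i.e. the infimum of the affine functions lying above $u^+$ there. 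Let $K=\{x\in\Omega:\ u(x)=\Gamma(x)\}$ be the upper contact set.

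\textbf{Step 1 (gradient image).} I will show that $\nabla\Gamma(K)$, understood through supergradients, contains the ball $B_{M/(3d)}(0)$. Take any $\xi$ with $|\xi|<M/(3d)$. Slide the affine function $\ell(x)=c+\xi\cdot x$ down from above until it first touches the graph of $u^+$. It cannot first touch on $B_{2d}\setminus\Omega$, where $u^+=0$: because $\ell$ must exceed $M$ at the maximum point of $u$, its slope bound forces $\ell>0$ on all of $B_{2d}$. It cannot first touch on $\partial\Omega$ either, since there $u\le 0$. So the touching point lies in $K$, and $\xi$ is a supergradient of $\Gamma$ there.

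\textbf{Step 2 (local regularity of $\Gamma$ on $K$ and a pointwise bound).} At $x_0\in K$, the function $u$ is touched from below by the affine function supporting $\Gamma$, and $u\le\Gamma$. I will use the viscosity subsolution property of $u$, tested against paraboloids, to show two things.
\begin{itemize}
\item $\Gamma$ is $C^{1,1}$ at $x_0$, in the sense that $\Gamma$ is trapped between its tangent plane and a paraboloid of opening $C f^-(x_0)$.
\item Wherever $\Gamma$ is twice differentiable on $K$, its Hessian satisfies $D^2\Gamma\le 0$ and $\PucP(D^2\Gamma)\ge f$.
\end{itemize}
Since $D^2\Gamma\le0$, we have $\PucP(D^2\Gamma)=\lambda\,\tr D^2\Gamma$, hence $-\tr D^2\Gamma\le f^-/\lambda$. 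The arithmetic–geometric mean inequality then gives
$$\det(-D^2\Gamma)\le\Big(\frac{f^-}{N\lambda}\Big)^N\quad\text{a.e. on }K.$$
This step is the main obstacle. Because $u$ is only upper semicontinuous and is a subsolution only in the viscosity sense, the $C^{1,1}$ property must be proved by the argument of \cite[Lemma 3.5]{CC}. One shows that if $\Gamma$ were far below its tangent plane somewhere near $x_0$, then a concave paraboloid with opening larger than $Cf^-(x_0)$ would touch $u$ from above. This contradicts $\PucP(D^2u)\ge f$ at that point. The continuity of $f$ is used to make this localization work.

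\textbf{Step 3 (area formula).} Since $\Gamma$ is $C^{1,1}$ on $K$, the map $\nabla\Gamma$ is Lipschitz there, and the area formula combined with Step 1 gives
$$|B_1|\Big(\frac{M}{3d}\Big)^N\le|\nabla\Gamma(K)|\le\int_K\det(-D^2\Gamma)\le\int_{\Omega}\Big(\frac{f^-}{N\lambda}\Big)^N .$$
It follows that $M\le C(N,\lambda,d)\,\|f^-\|_{L^N(\Omega)}$, which is the asserted estimate with $C$ depending only on $N,\lambda,\Lambda$ and $\mathrm{diam}\,\Omega$.
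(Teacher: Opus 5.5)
Your proposal is correct and is essentially the concave-envelope proof of \cite[Chapter 3]{CC}, which is exactly what the paper relies on here in place of giving a proof. The one point to state more carefully is the $C^{1,1}$ step. What the localization argument actually gives is $0\le \ell_{x_0}-\Gamma\le C\bigl(\sup_{B_{r}(x_0)}f^-\bigr)r^2$ on $B_r(x_0)$, where $\ell_{x_0}$ is the supporting plane at $x_0$, valid for all $r$ below a threshold fixed by $\mathrm{dist}(K,\partial\Omega)$; this is uniform because $K$ is compact in $\Omega$, so $f^-$ is bounded near it. It is this uniform form, not a paraboloid of opening $Cf^-(x_0)$, that makes $\nabla\Gamma$ Lipschitz on $K$ and justifies the area formula in Step 3.
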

\begin{corollary}\label{smalld} Let $D>0$. For any domain $\Omega$ such that  ${\rm{diam}}\,\Omega\leq D$, and any given $\mu\in\mathbb R$, there exists $\delta=\delta(N,\lambda,\Lambda,\mu,D)>0$ such that if $|\Omega|\leq\delta$ and $v$ is solution of 
$$\PucP(D^2v)+\mu v\geq 0\ \mbox{in}\ \Omega, \ v\leq 0\  \mbox{on}\ \partial\Omega$$
then $v\leq 0$ in $\Omega$.
\end{corollary}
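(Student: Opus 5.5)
Since the statement is linear in $v$ except for the zero-order term, we can apply the ABP estimate of Theorem \ref{abp} to $v$ itself, regarding $-\mu v$ as the right-hand side. Suppose, arguing by contradiction, that $M:=\sup_\Omega v>0$. The function $v$ satisfies $\PucP(D^2v)\geq -\mu v$ in $\Omega$. To apply Theorem \ref{abp} we need a right-hand side $f$ whose negative part we can control. So we set $f:=-\mu v$ when $\mu\geq 0$, and deal with $\mu<0$ separately.

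If $\mu\le 0$, then on the open set $\{v>0\}$ we have $\PucP(D^2v)\geq -\mu v\geq 0$. On the boundary of this set, $v\le 0$: on $\partial\Omega$ by hypothesis, and at interior boundary points by upper semicontinuity, where $v\le 0$. Theorem \ref{abp} with $f=0$ then gives $v\le 0$ on $\{v>0\}$, which is a contradiction. No smallness of $|\Omega|$ is needed in this case.

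If $\mu>0$, we work on $\Omega^+:=\{v>0\}\subset\Omega$. This set has diameter at most $D$, measure at most $|\Omega|\le\delta$, and $v\le 0$ on $\partial\Omega^+$. There $\PucP(D^2v)\geq -\mu v=:f$, so $f^-=\mu v^+\le \mu M$. Theorem \ref{abp}, whose constant depends only on $N,\lambda,\Lambda$ and a bound $D$ for the diameter (by monotonicity in the diameter, or because ABP holds with $\mathrm{diam}$ replaced by any larger number), gives
$$M=\sup_{\Omega^+}v\le \sup_{\partial\Omega^+}v^+ + C\,\|\mu v^+\|_{L^N(\Omega^+)}\le C\,\mu\, M\,|\Omega|^{1/N}.$$
Choosing $\delta$ such that $C(N,\lambda,\Lambda,D)\,\mu\,\delta^{1/N}<1$ yields $M<M$, a contradiction. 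Hence $v\le 0$.

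The main technical point is justifying the restriction to $\Omega^+$. We will avoid it by working on $\Omega$ directly with $f=-\mu v^+$. Where $v>0$ this is just the inequality we have. Where $v\le 0$ it reads $\PucP(D^2v)\ge -\mu v^+=0$, which does not follow from $\PucP(D^2 v)\geq -\mu v$ when $\mu>0$ and $v<0$. So we instead use the function $w=v^+=\max(v,0)$. As the maximum of two viscosity subsolutions of $\PucP(D^2\cdot)\ge -\mu (\cdot)^+$ (note $0$ is one), $w$ is a subsolution of $\PucP(D^2w)\ge -\mu w$ in $\Omega$, with $w\le0$ on $\partial\Omega$. Applying Theorem \ref{abp} to $w$ on $\Omega$ gives $\sup w\le C\mu\,\sup w\,|\Omega|^{1/N}$, and the conclusion follows as above, with $\delta=\delta(N,\lambda,\Lambda,\mu,D)$.
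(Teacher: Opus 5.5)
Your argument is essentially the paper's. The ABP estimate (Theorem \ref{abp}) with $f^-=\mu v^+$ gives $\sup_\Omega v\le C\mu|\Omega|^{1/N}\sup_\Omega v^+$, and $\delta$ is chosen so that $C\mu\delta^{1/N}<1$; the case $\mu\le0$ is handled by the ordinary maximum principle.

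However, the remark in your last paragraph is false. If $\mu>0$ and $v<0$, then $-\mu v>0=-\mu v^+$. More generally, $v\le v^+$ and $\mu>0$ give $\PucP(D^2v)\ge-\mu v\ge-\mu v^+$ on all of $\Omega$. This is exactly the inequality the paper uses to apply Theorem \ref{abp} to $v$ on $\Omega$ directly. So the ``technical point'' you worry about does not arise, and you need neither the restriction to $\{v>0\}$ nor the passage to $w=v^+$.

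Your justification of the $w$-step also contradicts that remark: it presupposes that $v$ is a subsolution of $\PucP(D^2\cdot)\ge-\mu(\cdot)^+$, which is the very inequality you had just said fails. The step is nevertheless correct. One way to see this is that $v$ and $0$ are both subsolutions of $\PucP(D^2\cdot)+\mu\,\cdot\ge0$, and $w\ge0$.

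A smaller slip occurs in the case $\mu\le0$. That $v\le0$ on the relative boundary of $\{v>0\}$ comes from this set being open, which uses continuity of $v$ (lower, not upper, semicontinuity). Upper semicontinuity alone would only give $v\ge0$ at such points.
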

This is a well-known fact, we provide the proof because it is brief and may help the forgetful reader. 
\begin{proof}  If $\mu \leq 0$ it follows from the standard maximum principle for proper operators with uniformly elliptic principal part which is independent of $x$. So we can suppose that $\mu>0$ Suppose by contradiction that $\sup_{\Omega} v^+>0$. Since 
$$
{\mathcal M^+}(D^2v)\geq -\mu v^+\ \mbox{in}\ \Omega,
$$ by Theorem \ref{abp} we get that 
$$\sup_{\Omega} v\leq C\|\mu v^+\|_{L^N(\Omega)}\leq C\mu \delta^{\frac{1}{N}}\sup_{\Omega} v^+.$$
Hence if $C\mu \delta^{\frac{1}{N}}<1$ we get a contradiction. Hence the choice of $\delta$ depends only on $N$, $\lambda$, $\Lambda$, $D$ and $\mu$.
\end{proof}

Let us now provide a strong comparison principle:
\begin{proposition} \label{strongcomp}
Let $u_1\geq0$ and $u_2\geq0$ in $\Omega$ satisfying 
$$ \PucF(x,D^2u_1)+\mu u_1\leq k(x)u_1^p, \ \    \PucF(x,D^2u_2)+\mu u_2\geq k(x)u_2^p, \ \mbox{in }	 \Omega$$
with $k(x)\geq 0$. 

If   $u_1\geq u_2$ in $\Omega$ then either  $u_1> u_2$ in $\Omega$ or $u_1\equiv u_2$.
\end{proposition}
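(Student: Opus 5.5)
The plan is to reduce the statement to the strong maximum principle for the Pucci operator. Set $w=u_2-u_1\le 0$ in $\Omega$. By Proposition \ref{GV}, applied with $f(x,s)=k(x)s^p-\mu s$ and $g(x,s)=k(x)s^p-\mu s$, the function $w$ is a viscosity solution of
$$\PucP(D^2w)\geq k(x)\bigl(u_2^p-u_1^p\bigr)-\mu(u_2-u_1)\quad\text{in }\Omega.$$

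The next step is to make the right-hand side linear in $w$. Since $0\le u_2\le u_1$ and both are locally bounded, on any compact $K\subset\Omega$ the mean value theorem gives
$$u_2^p-u_1^p=c(x)(u_2-u_1),\qquad 0\le c(x)\le p\,\sup_K u_1^{p-1}.$$
Because $w\le0$ and $k\ge0$, we have $k(u_2^p-u_1^p)\ge k_1 p M_K^{p-1}w$. Hence, locally,
$$\PucP(D^2w)+C_K|w|\ \ge\ 0,$$
which is the same as $\PucP(D^2w)-C_K w\ge 0$ for a constant $C_K\ge0$, with $w\le0$. Setting $z=-w\ge0$, we get $\PucM(D^2z)-C_K z\le 0$, that is, $z$ is a nonnegative supersolution of a proper uniformly elliptic equation.

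We then invoke the strong maximum principle for viscosity supersolutions of $\PucM(D^2z)-cz\le0$ with $z\ge0$. This follows from the weak Harnack inequality of Caffarelli--Cabr\'e \cite{CC}, or from a Hopf-type barrier argument. It gives that on each ball $B\subset\subset\Omega$ either $z>0$ or $z\equiv0$. A connectedness argument then concludes. The set $\{z=0\}$ is closed by the lower semicontinuity of $z$, and it is open by the local alternative. Since $\Omega$ is a domain, either $z\equiv0$, so $u_1\equiv u_2$, or $z>0$ everywhere, so $u_1>u_2$.

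The main technical obstacle is the regularity needed to write the linearization. The functions $u_1,u_2$ are only semicontinuous, so we must ensure local boundedness of $u_1$ and continuity of the coefficient bound. If continuity is not assumed, I would first restrict to balls where $u_1$ is bounded above; this holds if $u_1$ is continuous, as solutions in the paper are. I would also handle the case $u_2(x)=u_1(x)$, where the difference quotient is undefined, by using directly the inequality $u_2^p-u_1^p\ge p\,u_1^{p-1}(u_2-u_1)$, which follows from convexity of $s\mapsto s^p$. This avoids defining $c(x)$ at all. The remaining tool is the strong maximum principle itself, which I would cite from \cite{CC}, or prove via the standard barrier $e^{-\alpha|x|^2}-e^{-\alpha R^2}$ on an annulus.
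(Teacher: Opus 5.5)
Your proof is correct, but it is organized differently from the paper's. The paper never forms the difference $u_1-u_2$. It argues by contradiction: it takes a ball with $u_1>u_2$ on $\overline{B_{R/2}(x_0)}$ and a contact point $x_1\in B_R(x_0)$. It then checks by hand that $u_2+w$, with $w=\delta(e^{-c|x-x_0|}-e^{-cR})$, is a strict subsolution of the proper nonlinear equation $\PucF(x,D^2v)-k(x)v^p=-\mu u_2$ in the annulus $B_R(x_0)\setminus B_{R/2}(x_0)$, and that it lies below $u_1$ on the boundary of the annulus. Comparison then gives $u_1\ge u_2+w>u_2$ at $x_1$.

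You instead combine Proposition~\ref{GV} with the tangent-line inequality $u_2^p-u_1^p\ge p\,u_1^{p-1}(u_2-u_1)$. This pushes the nonlinearity into a bounded zero-order coefficient, so that $z=u_1-u_2\ge 0$ is a supersolution of $\PucM(D^2z)-C_Kz\le 0$, with for instance $C_K=k_1pM_K^{p-1}+|\mu|$. You then apply the strong maximum principle for this Pucci inequality, and you make explicit the open--closed argument that the paper leaves implicit in its choice of $x_0,x_1$.

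Your route has two advantages. It is more modular: it reuses Proposition~\ref{GV}, which is needed anyway for Proposition~\ref{comp}, together with a textbook strong maximum principle. It is also insensitive to the sign of $\mu$, which is simply absorbed into $C_K$; the paper's computation uses $\mu\ge 0$ to get $-\mu u_1\le -\mu u_2$ and leaves $\mu<0$ to the reader.

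The paper's route has its own advantage. It needs only a local upper bound on $u_2$, which is automatic for an upper semicontinuous function. Your linearization needs $\sup_K u_1<\infty$, which is not automatic for a merely lower semicontinuous supersolution. So you are really using that $u_1$ is locally bounded above (e.g. continuous). This is harmless here, since every supersolution to which the paper applies the result is continuous.

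One small point: of the two ways you offer to prove the strong maximum principle, use the barrier. The weak Harnack inequality of \cite{CC} with right-hand side $f=C_Kz$ only bounds $z$ in $L^{p_0}$ by $\inf z+\|C_Kz\|_{L^N}$. That does not force $z\equiv 0$ when $\inf z=0$, unless you use a version that incorporates the zero-order term. The exponential barrier on an annulus, by contrast, works directly for lower semicontinuous $z$, precisely because $C_K\ge 0$.
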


\begin{proof}  By contradiction, we can find $x_0,x_1\in\Omega$ with $u_1(x_1)=u_2(x_1)$, $u_1(x_0)>u_2(x_0)$ and a ball  $B_R(x_0)\subset\Omega$ such that  
\begin{equation}\label{1201eq1}
u_1(x)>u_2(x)\quad\forall x\in\overline {B_{\frac R2}(x_0)}.
\end{equation}and $x_1\in B_R(x_0)$. We introduce $w(x)=\delta(e^{-c|x-x_0|}-e^{-cR})$ with $c$ and $\delta\in(0,1)$ positive constants to be determined in such a way  
$$\PucF(x,D^2(u_2+w))-k(x)(u_2+w)^p>\PucF(x,D^2u_1)-k(x)u^p_1\quad\,\text{in $B_{R}(x_0)\setminus B_{\frac{R}{2}}(x_0)$}$$and
$$
u_2+w\leq u_1\quad\,\text{on $\partial\left(B_{R}(x_0)\setminus B_{\frac{R}{2}}(x_0)\right)$.}
$$  
It is easy to see that for $c$ sufficiently large, e.g. such that 
$$
\lambda c^2-\frac{2\Lambda(N-1)c}{R}-\sup_ {B_{R}(x_0)\setminus B_{\frac{R}{2}}(x_0)}(k(x){(u_2(x)+1)}^{p-1})>1,
$$
we get, in $B_{R}(x_0)\setminus B_{\frac{R}{2}}(x_0)$,
\begin{equation*}
\begin{split}
\PucF(x,D^2(u_2+w))&-k(x)(u_2+w)^p\geq \PucF(x,D^2u_2)+{\mathcal M}^-(D^2w)-k(x)(u_2+w)^p\\
&\geq-\mu u_2 +{\mathcal M}^-(D^2w)+k(x)(u^p_2-(u_2+w)^p)\\
&\geq -\mu u_2+\delta e^{-c|x-x_0|}\left(\lambda c^2-\frac{2\Lambda(N-1)c}{R}-\sup_ {B_{R}(x_0)\setminus B_{\frac{R}{2}}(x_0)}(k(x){(u_2+1)}^{p-1})\right)\\&>-\mu u_2+\delta e^{-cR},
\end{split}
\end{equation*}
while
$$\PucF(x,D^2u_1)- k(x)u_1^p\leq -\mu u_1\leq-\mu u_2\quad\text{in  $\Omega$}. $$
In the above inequality, we used the assumption $\mu\geq0$. The case $\mu<0$  is more standard, and the details are left to the reader. 
Clearly $u_2+w\leq u_1$ on $\partial B_{R}(x_0)$. Moreover, using \eqref{1201eq1}, we infer that for $\delta$ sufficiently small we also have 
$u_2+w\leq u_1$ on $\partial B_{\frac R2}(x_0)$.
Since the operator $\PucF(x,D^2\cdot)- k(x)(\cdot)^p$ is proper we can apply comparison principle and we get that 
in $B_{R}(x_0)\setminus B_{\frac{R}{2}}(x_0)$
 $$u_1(x)\geq u_2(x) +w(x).$$ 
 This is a contradiction, since $x_1\in B_{R}(x_0)\setminus B_{\frac{R}{2}}(x_0)$.
\end{proof}

We are now in a position to prove the comparison principle of Proposition \ref{comp}.
\begin{proof}[Proof of Proposition \ref{comp}] The proof consists of  two steps.

\smallskip
\noindent
{\bf Step 1:} {\it There exists $t_0\in(0,1]$ such that for $t\leq t_0$, $tu_2\leq u_1$ in $\Omega$.}

\smallskip
\noindent
Let $K$ be compact subset of $\Omega$ such that $|\Omega\setminus K|\leq\delta$ for $\delta$ sufficiently small. By the positivity of $u_1$ and $u_2$ there exists $t_0$ such that $t_0 u_2\leq u_1$ in $K$. We want to prove that for $\delta$ sufficiently small $t_0 u_2\leq u_1$ in $\Omega$. 
Suppose by contradiction that there exists $\bar x\in \Omega\setminus K$ such that $t_0 u_2(\bar x)>u_1(\bar x)$. Hence, the open set
$$\Omega_K^+:=\{x\in \Omega\setminus K \ \mbox{such that}\  t_0 u_2( x)>u_1( x)\}$$
is non empty and of measure smaller then $\delta$.

By Propositions \ref{GV}, in $\Omega_K^+$, the function $w(x):=t_0 u_2(x)-u_1(x)$ is a positive solution of
$$\PucP (D^2w)+\mu w\geq k(x) (t_0u_2^p-u_1^p)\geq k(x)\left({\left(t_0u_2\right)}^p-u_1^p\right)\geq0.$$
Hence, if $\delta$ is as in Corollary \ref{smalld}, then $w\leq 0$ contradicting the definition of $\Omega_K^+$.
This ends the proof of Step 1.

\smallskip

Now let 
$$\tau=\sup\{t\leq 1\,:\; tu_2\leq u_1\ \mbox{in}\ \Omega\}.$$
Observe that $t_0\leq \tau \leq 1$ and by continuity $\tau u_2\leq u_1$ in $\Omega$.

\smallskip
\noindent{\bf Step 2:} $\tau=1$.

\smallskip
\noindent
Suppose by contradiction that $\tau<1$. By the strong comparison principle Proposition \ref{strongcomp} either $\tau u_2\equiv u_1$ or $\tau u_2< u_1$.


The first case is not possible. Indeed, by Proposition \ref{GV}, the function $w=\tau u_2-u_1$, is a viscosity solution of 
$$
\PucP (D^2w)+\mu w\geq k(x)\left(\tau^{1-p}-1\right)u_1^p\quad\;\text{in $\Omega$}. 
$$
Since $w\equiv0$, the previous inequality yields a contradiction
$$0<k(x)\left(\tau^{1-p}-1\right)u_1^p(x)\leq0\quad\,\forall x\in\Omega\cap\{x:\ k(x)>0\}.$$

If, instead $\tau u_2< u_1$, we can choose $K$ as in the Step 1, hence there exists $\varepsilon>0$ such that $(\tau+\varepsilon)u_2\leq u_1$ in $K$. Finally, reasoning as above, $(\tau+\varepsilon)u_2\leq u_1$ in $\Omega$, contradicting the definition of $\tau$.

Therefore $\tau=1$ and $u_2\leq u_1$ in $\Omega$.
\end{proof}

\begin{remark} {\rm The above proposition applies also if $\Omega$ is replaced by $\Omega\setminus\overline\Omega_0$.}
\end{remark}

Combining Propositions \ref{strongcomp}-\ref{comp} we obtain the following
\begin{corollary}\label{mon} The solution $u_\mu$  of \eqref{eq1} is monotone in $\mu$, i.e. if $\mu_1<\mu_2$ then $u_{\mu_1}<u_{\mu_2}$ in $\Omega$.
\end{corollary}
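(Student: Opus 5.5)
The plan is to show that $u_{\mu_1}$ is a strict subsolution of the problem at level $\mu_2$, and then to apply Proposition \ref{comp} followed by Proposition \ref{strongcomp}.

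\textbf{Step 1: $u_{\mu_1}$ is a subsolution at level $\mu_2$.} Let $\mu_1<\mu_2$ both lie in the existence range, and write $u_i=u_{\mu_i}$. Since $u_1>0$ in $\Omega$ and $\mu_2>\mu_1$, we have, in the viscosity sense,
$$\PucF(x,D^2u_1)+\mu_2u_1=k(x)u_1^p+(\mu_2-\mu_1)u_1\geq k(x)u_1^p \quad\text{in }\Omega .$$
This holds because a test function touching $u_1$ from above yields $\PucF\geq k u_1^p-\mu_1u_1$ at the contact point.

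\textbf{Step 2: comparison.} The function $u_2>0$ is a solution, hence a supersolution, of the $\mu_2$-equation. Both functions vanish on $\partial\Omega$. Moreover $k\not\equiv 0$: under (K1) this is immediate, and under (K2) it holds since $\Omega_0\subset\subset\Omega$. Therefore Proposition \ref{comp}, applied at level $\mu_2$ with $u_1$ replaced by $u_2$ and $u_2$ replaced by $u_1$, gives $u_{\mu_1}\leq u_{\mu_2}$ in $\Omega$.

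\textbf{Step 3: strict inequality.} Since $u_{\mu_1}\leq u_{\mu_2}$, both are nonnegative, and they are respectively a sub- and a supersolution of the $\mu_2$-equation, Proposition \ref{strongcomp} gives either $u_{\mu_1}<u_{\mu_2}$ in $\Omega$ or $u_{\mu_1}\equiv u_{\mu_2}$. To exclude the second alternative, suppose $u_{\mu_1}\equiv u_{\mu_2}=:u$. Then $u$ solves both equations, so $(\mu_2-\mu_1)u=0$ in $\Omega$. More carefully, in the viscosity sense $u$ would be both a sub- and a supersolution of two equations whose right-hand sides differ by $(\mu_2-\mu_1)u>0$. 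Proposition \ref{GV} applied to $w=u-u\equiv0$ then gives $0=\PucP(0)\geq(\mu_2-\mu_1)u>0$, a contradiction. This is the same device used in Step 2 of the proof of Proposition \ref{comp}.

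\textbf{Main obstacle.} The only delicate point is checking that the sign hypotheses in Propositions \ref{comp} and \ref{strongcomp} are met. Proposition \ref{comp} needs the supersolution to be strictly positive, which holds for $u_{\mu_2}$, and it needs $k\not\equiv0$. If $\mu_2<0$, the proof of Proposition \ref{strongcomp} treats that case as standard, so it can be invoked directly. Once these hypotheses are confirmed, the argument is routine.
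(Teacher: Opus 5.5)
Your proposal is correct and is exactly the argument the paper has in mind when it says the corollary follows by combining Propositions \ref{strongcomp} and \ref{comp}. You show that $u_{\mu_1}$ is a subsolution at level $\mu_2$, apply comparison to get $u_{\mu_1}\le u_{\mu_2}$, and then rule out equality in the strong comparison principle with the Proposition \ref{GV} device, using $(\mu_2-\mu_1)u>0$ in place of $k(\tau^{1-p}-1)u_1^p$; your worry about $\mu_2<0$ is moot, since every $\mu$ in the existence range exceeds $\lambda_1^+(\Omega)>0$.
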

For $\phi\in C(\partial\Omega)$, let us denote by $\psi_\phi$ be the solution of
\begin{equation}
 \left\{\begin{array}{cl}
 \PucF(x,D^2\psi)=0 & \mbox{in }\ \Omega\\
 \psi=\phi & \mbox{on }\ \partial\Omega\,.
\end{array}
\right.
\end{equation}
\begin{corollary}\label{bdryest} Let $\mu\in \R$ and let $\phi\in C(\partial\Omega),\,k\in C(\Omega)$ be nonnegative functions such that $k\not\equiv0$ in $\Omega_\delta$ for any $\delta>0$. Consider the problem
\begin{equation}\label{eq1.10}
 \left\{\begin{array}{cl}
 \PucF(x,D^2u)+\mu u=k(x)u^p & \mbox{in }\ \Omega\\
 u\geq0 & \mbox{in }\ \Omega \\
u=\phi & \mbox{on }\ \partial\Omega.
\end{array}
\right.
 \end{equation}
If $u$ is a subsolution of \eqref{eq1.10}, then there exists $C=C(\lambda,\Lambda,N,\mu,\Omega,\left\|\psi_\phi\right\|_\infty,\sup_\Omega u)$ positive constant,  such that
\begin{equation}\label{20mareq2}
u(x)\leq \psi_\phi(x) +Cd(x)\qquad \forall x \in\Omega.
\end{equation}
If $u>0$ is a supersolution of \eqref{eq1.10}, then there exists $C=C(\lambda,\Lambda,N,\mu,\Omega,\left\|\psi_\phi\right\|_\infty)$ positive constant,  such that
\begin{equation}\label{20mareq2'}
u(x)\geq \sup\{\psi_\phi(x) -Cd(x),0\}\qquad \forall x \in\Omega.
\end{equation}
\end{corollary}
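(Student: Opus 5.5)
The plan is to build explicit barriers near $\partial\Omega$ and compare $u$ with them on the boundary strip $\Omega_\delta$, treating $\psi_\phi$ as the boundary datum and adding a correction $\pm C\,\eta(d)$. Here $\eta$ is a concave function of the distance, for instance $\eta(s)=1-e^{-as}$, which is comparable to $s$ for $s\le\delta$. Fix $\delta$ small so that $d$ is $C^2$ in $\Omega_\delta$ and $|\Delta d|\le c_\Omega$ there. A direct computation of the Hessian of $\eta(d)$ gives
\[
\PucM\bigl(D^2\eta(d)\bigr)\le -\lambda a^2e^{-ad}+\Lambda a\,e^{-ad}\,c_\Omega' .
\]
Hence for $a$ large $\PucM(D^2 \eta(d))\le -\tfrac{\lambda a^2}{2}e^{-a\delta}$ in $\Omega_\delta$, so $\PucP(D^2(-\eta(d)))$ is bounded below by a positive constant there.

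\emph{Subsolution estimate.} Set $v=\psi_\phi+C\eta(d)$. Since $\PucF(x,D^2\psi_\phi)=0$, (F1) gives
\[
\PucF(x,D^2v)\le \PucF(x,D^2\psi_\phi)+C\,\PucP(D^2\eta(d))\le -C c_0
\]
for some $c_0>0$. Then $\PucF(x,D^2v)+\mu v-k v^p\le -Cc_0+|\mu|(\|\psi_\phi\|_\infty+C)$, and since $k v^p\ge0$ this is $\le \PucF(x,D^2u)+\mu u - k u^p$ provided $-Cc_0+|\mu|(\|\psi_\phi\|_\infty+C) < -|\mu|\sup u$. This is not quite right when $a$ is fixed, so I will work instead with $w=u-\psi_\phi-C\eta(d)$ via Proposition \ref{GV}. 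By Proposition \ref{GV} (applied with $f=-\mu u+ku^p$, $g=\PucF(x,D^2(\psi_\phi+C\eta(d)))$, or more simply by the same doubling argument),
\[
\PucP(D^2 w)\ge -\mu u + k u^p + C c_0\ge Cc_0-|\mu|\sup_\Omega u>0
\]
once $C$ is large, depending on $\sup u$ and $\mu$. The boundary values are $w\le0$ on $\partial\Omega$ (where $\eta(d)=0$ and $u\le\phi$) and $w\le \sup u+\|\psi_\phi\|_\infty-C\eta(\delta)\le0$ on $\{d=\delta\}$, after enlarging $C$. The ordinary maximum principle for $\PucP(D^2w)>0$, i.e. Theorem \ref{abp} with $f>0$, then gives $w\le0$ in $\Omega_\delta$. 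In $\Omega\setminus\Omega_\delta$ we have $d\ge\delta$, so \eqref{20mareq2} holds trivially there with $C\ge(\sup u+\|\psi_\phi\|_\infty)/\delta$. Note that $\eta(d)\le a d$, so this gives the stated linear bound.

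\emph{Supersolution estimate.} Symmetrically take $w=\psi_\phi-C\eta(d)-u$. The point is that we need no upper bound on $u$. We have $\PucF(x,D^2u)\le -\mu u+ku^p$, and the bad term $ku^p$ has the favourable sign only if we compare on the set $\{w>0\}$, where $u<\psi_\phi\le\|\psi_\phi\|_\infty$. Hence on $\{w>0\}\cap\Omega_\delta$,
\[
\PucP(D^2w)\ge C c_0 +\mu u - k u^p\ge Cc_0-|\mu|\|\psi_\phi\|_\infty-k_1\|\psi_\phi\|_\infty^p>0
\]
for $C$ large, depending only on the allowed quantities. On the boundary of this set, $w\le0$: on $\partial\Omega$ because $u\ge\phi$ (supersolution data), on $\{d=\delta\}$ because $C\eta(\delta)\ge\|\psi_\phi\|_\infty$, and on the relative boundary of $\{w>0\}$ trivially. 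The maximum principle then gives $w\le0$, and together with $u>0$ this yields \eqref{20mareq2'}. In $\Omega\setminus\Omega_\delta$ the bound is again trivial because $\psi_\phi-Cd\le0$ there.

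The main obstacle is making the use of Proposition \ref{GV} legitimate. The barrier $\psi_\phi\pm C\eta(d)$ is only a viscosity sub/supersolution of $\PucF\le -Cc_0$ or $\PucF\ge Cc_0$, obtained from the smooth perturbation of the viscosity solution $\psi_\phi$ via (F1). I would check this directly with test functions: if $\varphi$ touches $\psi_\phi+C\eta$ from above, then $\varphi-C\eta$ touches $\psi_\phi$ from above. A secondary point is that the hypothesis $k\not\equiv0$ near $\partial\Omega$ plays no role in these local barriers. I expect it is needed only for solvability of \eqref{eq1.10} rather than for this estimate, so I will not use it.
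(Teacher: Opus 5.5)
Your proof is correct, but it takes a different route from the paper's.

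\textbf{The paper's route.} The paper uses the singular barrier $v=d-d^\alpha$ with $\alpha\in(1,2)$. Its Hessian has the eigenvalue $-\alpha(\alpha-1)d^{\alpha-2}$, which blows up at $\partial\Omega$, and this blow-up absorbs every zero-order term. After shrinking $\delta$ (depending on $\mu$, $\|\psi_\phi\|_\infty$, $\|k\|_\infty$), $\psi_\phi+Cv$ is a genuine positive supersolution of $\PucF(x,D^2 w)+\mu w\le 0\le k w^p$ in $\Omega_\delta$. Likewise $\max\{\psi_\phi-Cv,0\}$ is a subsolution of the logistic equation. Both estimates then follow from the logistic comparison principle, Proposition \ref{comp}, applied in $\Omega_\delta$. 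That is exactly where the hypothesis $k\not\equiv0$ in $\Omega_\delta$ is used.

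\textbf{Your route.} You take the bounded-curvature barrier $\eta(d)=1-e^{-ad}$, keep $\delta$ depending only on $\Omega$, and let only $C$ be large. The essential difference is that you never ask the barrier to satisfy the logistic equation. Through Proposition \ref{GV} you put the zero-order terms on $u$, where they are bounded: by $|\mu|\sup u$ in the subsolution case, and by $|\mu|\|\psi_\phi\|_\infty+\|k\|_\infty\|\psi_\phi\|_\infty^p$ where $w>0$ in the supersolution case. You then close with the plain maximum principle for $\PucP$.

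\textbf{Comparison.} Yours is the classical barrier argument for bounded right-hand sides. It dispenses with Proposition \ref{comp}, and hence with the assumption on $k$ near $\partial\Omega$, so your remark about that hypothesis is right. The paper's version keeps everything inside the logistic comparison framework. There $\sup u$ enters only through matching on $\{d=\delta\}$, whereas for you it also enters the interior inequality. That makes no difference to the stated dependence of $C$. In both proofs the constant for the lower bound also depends on $\|k\|_\infty$ and $p$.

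\textbf{Small fixes.}
\begin{itemize}
\item In both parts what you actually need is $\PucP(D^2\eta(d))\le -c_0$, not a bound on $\PucM$. For instance, $\PucF(x,D^2\psi_\phi-CD^2\eta)\ge \PucF(x,D^2\psi_\phi)-C\,\PucP(D^2\eta)$. Your displayed right-hand side, with $\lambda$ multiplying the eigenvalue $-a^2e^{-ad}$, is exactly the correct bound for $\PucP$, so nothing breaks. Note that $c'_\Omega$ must control $\|D^2 d\|$ in $\Omega_\delta$, not just $|\Delta d|$.
\item A supersolution is only lower semicontinuous, so $\{w>0\}$ need not be open. Instead, evaluate the inequality from Proposition \ref{GV} at an interior positive maximum point $x_0$ of $w$, where $u(x_0)<\|\psi_\phi\|_\infty$. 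Use the constant test function, perturbed by $|x-x_0|^4$ to make the maximum strict.
\end{itemize}
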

\begin{proof} 
Let  $\delta\in(0,1)$ be sufficiently small such that the distance function $d(x)\in C^2({\Omega_\delta})$.
Clearly it is enough to prove both inequalities \eqref{20mareq2}-\eqref{20mareq2'} in $\Omega_\delta$.

\smallskip
\noindent
Fix $\alpha\in(1,2)$ and consider the nonnegative function $v(x)=d(x)-d^\alpha(x)$. For $x\in\Omega_\delta$, the eigenvalues of 
$$
D^2v(x)=(1-\alpha d^{\alpha-1}(x))D^2d(x)-\alpha(\alpha-1)d^{\alpha-2}(x)Dd(x)\otimes Dd(x)
$$
are $e_1(D^2v(x))=-\alpha(\alpha-1)d^{\alpha-2}(x)<0$ and $e_i(D^2v(x))=(1-\alpha d^{\alpha-1}(x))e_i(D^2d(x))$, where $e_i(D^2d(x))$ are,  for $i=1,\ldots,N-1$, the eigenvalues of $D^2d(x)$ corresponding to eigenvectors orthogonal to $Dd(x)$. 

Set $\overline w=\psi_\phi+Cv$, where 
\begin{equation}\label{20mareq1}
C=\max\left\{1,\frac{\left\|u\right\|_\infty}{\delta-\delta^\alpha}\right\}.
\end{equation}
Since $\psi_\phi=\phi$ on $\partial\Omega$ and $\psi_\phi\geq0$ in $\Omega$, by \eqref{20mareq1} we infer that $\overline w\geq u$ on $\partial\Omega_\delta$. In order to prove \eqref{20mareq2}, it suffices to show that $\overline w$ is a supersolution of
\begin{equation}\label{20mareq3}
\PucF(x,D^2 \overline w)+\mu \overline w=0\,\quad\text{in $\Omega_\delta$}
\end{equation}  
and then apply Proposition \ref{comp} in $\Omega_\delta$. Note that $k\not\equiv0$ in $\Omega_\delta$ by assumption. \\
A straightforward computation yields
\begin{equation*}
\begin{split}
\PucF(x,D^2 \overline w(x))+\mu\overline w(x) &\leq C{\mathcal M}^+(D^2v(x))+\mu \overline w(x)\\
&\leq Cd^{\alpha-2}(x)\big(-\lambda\alpha(\alpha-1)+(N-1)\Lambda(1+\alpha) \left\|D^2d\right\|_\infty d^{2-\alpha}(x)\\
&\hspace{2cm}+|\mu|(\left\|\psi_\phi\right\|_\infty+1) d^{2-\alpha}(x)\big)\qquad\text{in $\Omega_\delta$}.
\end{split}
\end{equation*} 
Reducing $\delta$ if necessary,  the right-hand side of the above inequality is negative since $\alpha\in(1,2)$ and $\lambda>0$. Thus $\overline w$ is supersolution of \eqref{20mareq3} and the estimate \eqref{20mareq2} follows.
%
%
%
%
%
%

\medskip

For \eqref{20mareq2'},  we consider $\underline w(x)=\sup\{\psi_\phi-Cv,0\}$ where 
$$
C=\max\left\{1,\frac{\left\|\psi_\phi\right\|_\infty}{\delta-\delta^\alpha}\right\}.
$$
Note that, with this choice of  $C$, we have that $\underline w\leq u$ on $\partial\Omega_\delta$.  Moreover, by computations similar to the above one,  for  $\delta$ sufficiently small we have
\begin{equation*}
\begin{split}
\PucF(x,D^2(\psi_\phi-Cv)(x))&+ \mu (\psi_\phi-Cv)(x)-k(x)|\psi_\phi-Cv|^{p-1}(\psi_\phi-Cv)(x)\\  &\geq -C\PucP(D^2v(x))-|\mu|(\left\|\psi_\phi\right\|_\infty +C)-\|k\|_\infty\|\psi_\phi\|^p_\infty\\
&\geq Cd^{\alpha-2}(x)\big(\lambda\alpha(\alpha-1)-2(N-1)\Lambda(1+\alpha) \left\|D^2d\right\|_\infty d^{2-\alpha}(x)\\&\hspace{2cm}-|\mu|(\left\|\psi_\phi\right\|_\infty +1)d^{2-\alpha}(x)\big)
\geq0\qquad\text{in $\Omega_\delta$}.
\end{split}
\end{equation*} 
Since the null function is a trivial subsolution of the same inequality, we have obtained that $\underline w(x)$ is a nonnegative subsolution of
$$ \PucF(x,D^2\underline w)+\mu \underline w\geq k(x)\underline w^p\quad\,\text{in $\Omega_\delta$}.$$
 Applying Proposition \ref{comp} we get the desired result.
\end{proof}

\section{Existence and nonexistence of solutions.}

Part of Theorem \ref{KW}, concerns the nonexistence of positive solutions  for $\mu\leq\lambda_1^+(\Omega)$ and $\mu\not\in (\lambda_1^+(\Omega),\lambda_1^+(\Omega_0))$. This result follows quite immediately from the definition of  principal eigenvalues and their properties.

\begin{proposition}\label{23marprop1}
 Suppose that $\PucF$ satisfies \eqref{hypop}, \eqref{hypop2} and \eqref{hypop2'}.
\begin{enumerate}
\item If $k\in C(\Omega)$  satisfies \eqref{hypop1k}, then there are no positive solutions of \eqref{eq1} if $\mu\leq\lambda_1^+(\Omega)$.
\item If $k\in C(\Omega)$ satisfies \eqref{hypop2k}, then there are no positive  solutions of \eqref{eq1} if $\mu\not\in (\lambda_1^+(\Omega),\lambda_1^+(\Omega_0))$.
\end{enumerate}
\end{proposition}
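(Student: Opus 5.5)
We argue separately at the two ends of the range. All three arguments start from a positive solution $u$ of \eqref{eq1} and contradict either the definition of a principal eigenvalue or Proposition \ref{maxp}.

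\emph{Lower bound $\mu\le\lambda_1^+(\Omega)$, under either \eqref{hypop1k} or \eqref{hypop2k}.} Since $k\ge0$ and $u>0$, $u$ satisfies $\PucF(x,D^2u)+\mu u=k(x)u^p\ge0$ in $\Omega$ with $u=0$ on $\partial\Omega$.
\begin{itemize}
\item If $\mu<\lambda_1^+(\Omega)$, the first part of Proposition \ref{maxp} gives $u\le0$, which contradicts $u>0$.
\item If $\mu=\lambda_1^+(\Omega)$, the second part of Proposition \ref{maxp} gives $u=\tau\phi_\Omega$ with $\tau>0$. Then $\PucF(x,D^2u)+\lambda_1^+(\Omega)u=0$, so $k(x)u^p\equiv0$, that is, $k\equiv0$ in $\Omega$. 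This is impossible under \eqref{hypop1k}. It is also impossible under \eqref{hypop2k}, since there $k>0$ on $\Omega\setminus\Omega_1$ for any $\Omega_1$ with $\Omega_0\subset\subset\Omega_1\subsetneqq\Omega$.
\end{itemize}
This proves (1) and the lower half of (2).

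\emph{Upper bound $\mu\ge\lambda_1^+(\Omega_0)$, under \eqref{hypop2k}.} Here we restrict $u$ to $\Omega_0$, where $k\equiv0$. Then $u$ solves $\PucF(x,D^2u)+\mu u=0$ in $\Omega_0$ and $u>0$ on $\overline{\Omega_0}$, because $\overline{\Omega_0}\subset\Omega$.
\begin{itemize}
\item If $\mu>\lambda_1^+(\Omega_0)$, then $u$ is a positive supersolution in $\Omega_0$ for the value $\mu$, which gives $\lambda_1^+(\Omega_0)\ge\mu$ by definition, a contradiction.
\item If $\mu=\lambda_1^+(\Omega_0)$, we compare $u$ with $\phi_{\Omega_0}$. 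Set $t^*=\sup\{t:\ t\phi_{\Omega_0}\le u \text{ in }\Omega_0\}$; it is finite and positive because $u\ge c>0$ on $\overline{\Omega_0}$ and $\phi_{\Omega_0}$ is bounded. The function $w=t^*\phi_{\Omega_0}-u$ satisfies $w\le0$ and touches $0$ somewhere in $\overline{\Omega_0}$. On $\partial\Omega_0$ we have $w=-u<0$, so the touching point is interior. Both $t^*\phi_{\Omega_0}$ and $u$ solve $\PucF(x,D^2\cdot)+\mu\,\cdot=0$ in $\Omega_0$, so the strong comparison principle Proposition \ref{strongcomp} with $k\equiv0$ gives $t^*\phi_{\Omega_0}\equiv u$ in $\Omega_0$. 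Letting $x\to\partial\Omega_0$ gives $0=u>0$, a contradiction.
\end{itemize}

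An alternative for the case $\mu=\lambda_1^+(\Omega_0)$ is to use the strict positivity of $u$ on $\partial\Omega_0$ to build a positive supersolution on a slightly larger domain $\Omega_0'\supset\supset\Omega_0$ with eigenvalue $\lambda_1^+(\Omega_0')<\lambda_1^+(\Omega_0)$. The direct touching argument above is simpler.

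The step needing the most care is the equality case at $\lambda_1^+(\Omega_0)$, since the strict inequality comes only from the boundary of $\Omega_0$. There one has to check that Proposition \ref{strongcomp} applies with $k=0$ and that the sign of $\mu$ is harmless. This is fine: $\lambda_1^+(\Omega_0)>\lambda_1^+(\Omega)$, so $\mu$ is a fixed real number, and the proof of Proposition \ref{strongcomp} covers $\mu\ge0$ and leaves the more standard case $\mu<0$ to the reader.
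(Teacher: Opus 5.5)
Your proof is correct. For $\mu<\lambda_1^+(\Omega)$, $\mu=\lambda_1^+(\Omega)$ and $\mu>\lambda_1^+(\Omega_0)$ it coincides with the paper's argument: Proposition~\ref{maxp} handles the first two cases and the definition of $\lambda_1^+(\Omega_0)$ handles the third. The real difference is the endpoint $\mu=\lambda_1^+(\Omega_0)$.

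\emph{The paper's route.} At that endpoint the paper avoids the eigenfunction altogether. Let $u\ge\delta>0$ on $\overline{\Omega_0}$ and $M=\max_{\overline{\Omega_0}}u$. Then $v=u-\delta/2>0$ satisfies $\PucF(x,D^2v)+(\lambda_1^+(\Omega_0)+\varepsilon)v=-\lambda_1^+(\Omega_0)\delta/2+\varepsilon v\le 0$ in $\Omega_0$ once $\varepsilon<\lambda_1^+(\Omega_0)\delta/(2M-\delta)$. This contradicts the supremum defining $\lambda_1^+(\Omega_0)$. The argument perturbs the parameter rather than the domain, and it needs only the definition of $\lambda_1^+$ together with $\lambda_1^+(\Omega_0)>0$.

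\emph{Your route.} Your touching argument is also sound. The contact point of $t^*\phi_{\Omega_0}$ and $u$ is interior because $\phi_{\Omega_0}=0<u$ on $\partial\Omega_0$. Proposition~\ref{strongcomp} with $k\equiv0$ then forces $u\equiv t^*\phi_{\Omega_0}$. It uses more machinery, though: existence of $\phi_{\Omega_0}$, its continuity up to $\partial\Omega_0$ with zero boundary values, homogeneity \eqref{hypop2}, and the strong comparison principle. In return it exposes the structural reason behind the result. A positive solution at level $\lambda_1^+(\Omega_0)$ in $\Omega_0$ must be a multiple of the eigenfunction, and so must vanish on $\partial\Omega_0$.

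Two small remarks.
\begin{enumerate}
\item The fact you need about the sign is simply $\lambda_1^+(\Omega_0)>0$, which holds for any bounded domain. This puts you in the case $\mu\ge0$ that is actually proved in Proposition~\ref{strongcomp}.
\item Your alternative via a larger domain $\Omega_0'$ is not immediate. On $\Omega_0'\setminus\overline{\Omega_0}$ one has $k>0$, so $u$ is a subsolution rather than a supersolution of $\PucF(x,D^2u)+\mu u=0$ there, and some extra construction would be required. The paper's shift $u-\delta/2$ is the cheap way to exploit the strict positivity of $u$ on $\partial\Omega_0$.
\end{enumerate}
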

\begin{proof}
The nonexistence of solutions of \eqref{eq1} for $\mu\leq \lambda_1^+(\Omega)$ under hypothesis \eqref{hypop1k} or \eqref{hypop2k} relies only on the fact that $k$ is nonnegative and not identically zero.

If $\mu<\lambda_1^+(\Omega)$, since $u$ satisfies
$$
 \PucF(x,D^2u)+\mu u\geq 0\ \mbox{in }\ \Omega\ \mbox{and}\ 
u\leq 0 \ \mbox{on }\ \partial\Omega,
$$
 by Proposition \ref{maxp} we infer that $u\leq 0$.  Hence for any $k(x)\geq 0$, if $\mu<\lambda_1^+(\Omega)$  there can be no positive solutions of \eqref{eq1}.
 
If $\mu=\lambda_1^+(\Omega)$,  again, using Proposition \ref{maxp} there exists $\tau$ such that  $u(x)=\tau\phi_\Omega(x)$ where $\phi_\Omega$ is the eigenfunction corresponding to $\lambda_1^+(\Omega) $. But this is a contradiction since $k(x)\not\equiv 0$.

As far as the nonexistence for $\mu\geq \lambda_1^+(\Omega_0)$, we first note that the case $\mu> \lambda_1^+(\Omega_0)$ comes from the definition of $\lambda_1^+(\Omega_0)$.
Indeed, if there exists a solution $u$ of \eqref{eq1} for $\mu> \lambda_1^+(\Omega_0)$, then in $\Omega_0$, $u>0$ and 
$$\PucF(x,D^2u)+\mu u=0.$$
But, this implies by definition that $\lambda_1^+(\Omega_0)\geq\mu$.

If $\mu=\lambda_1^+(\Omega_0)$, we suppose by contradiction that there exists $u>0$ solution of \eqref{eq1}. This implies that there exists $\delta>0$ such that $u\geq\delta>0$ in $\Omega_0$. Let $M=\max_{\overline{\Omega_0}} u$ and $\varepsilon>0$ such that $\varepsilon<\frac{\lambda_1^+(\Omega_0)\delta}{2M-\delta}$. This implies that
$v=u-\frac{\delta}{2}$ satisfies in $\Omega_0$
$$
 \PucF(x,D^2v)+(\lambda_1^+(\Omega_0) +\varepsilon)v=-\lambda_1^+(\Omega_0)\frac\delta2+\varepsilon v\leq 0.
$$
This contradicts the definition of $\lambda_1^+(\Omega_0)$ and it ends the proof of the non existence results.
\end{proof}

Next, we are  going to concentrate on
 the existence part of Theorem \ref{KW}. We begin with the proof of the existence of a supersolution.
 \begin{lemma}\label{supso} Given $\mu\in (\lambda_1^+(\gO),\lambda_1^+(\gO_0))$ there exists $w^+\in C(\overline \Omega)$ positive supersolution of
$$ \left\{\begin{array}{cl}
 \PucF(x,D^2w^+)+\mu w^+= k(x)(w^+)^p & \mbox{in }\ \Omega\\
w^+= 0 & \mbox{on }\ \partial\Omega.
\end{array}
\right.
$$
\end{lemma}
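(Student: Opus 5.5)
The plan is to build $w^+$ as a minimum of three positive supersolutions, each valid in its own region. The first handles the sanctuary, where $k$ may vanish and only the spectral gap $\mu<\lambda_1^+(\Omega_0)$ helps. The second is a large constant, valid where $k$ is bounded away from zero. The third is a boundary layer built from $v=d-d^\alpha$ as in Corollary~\ref{bdryest}, which makes $w^+$ vanish on $\partial\Omega$. A minimum of viscosity supersolutions is again a supersolution: a test function touching the minimum from below at $x_0$ also touches, from below, every function that realizes the minimum at $x_0$. So it is enough to check that at each point at least one function realizing the minimum is a genuine supersolution there. In case \eqref{hypop1k} the first function is dropped ($\Omega_0=\emptyset$, $\lambda_1^+(\Omega_0)=+\infty$).

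\emph{Step 1 (inner domain).} Choose a $C^2$ domain $\Omega_2$ with $\Omega_0\subset\subset\Omega_2\subset\subset\Omega$ and $\mu<\lambda_1^+(\Omega_2)$. This is the step I expect to be the main obstacle, since it needs continuity of $\lambda_1^+$ under outer perturbation of $\Omega_0$. I would argue by contradiction with $\Omega^\varepsilon=\{x:\ {\rm dist}(x,\Omega_0)<\varepsilon\}$, which is $C^2$ for small $\varepsilon$. By domain monotonicity, $\lambda_1^+(\Omega^\varepsilon)\le\lambda_1^+(\Omega_0)$ and the values increase as $\varepsilon$ decreases; suppose their limit $\bar\lambda$ satisfies $\bar\lambda\le\mu<\lambda_1^+(\Omega_0)$. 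The normalized eigenfunctions $\phi_{\Omega^\varepsilon}$ enjoy uniform $C^\alpha$ estimates up to the boundary, since the domains have uniformly $C^2$ boundaries. A uniform boundary barrier of the type used in Corollary~\ref{bdryest} keeps their maxima away from $\partial\Omega^\varepsilon$. By stability of viscosity solutions they converge to some $\phi\ge0$ on $\overline{\Omega_0}$ with $\|\phi\|_\infty=1$, $\phi=0$ on $\partial\Omega_0$ and $\PucF(x,D^2\phi)+\bar\lambda\phi=0$. Proposition~\ref{maxp}, applied in $\Omega_0$ with $\bar\lambda<\lambda_1^+(\Omega_0)$, gives $\phi\le0$, a contradiction. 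Then fix $\Omega_1$ with $\Omega_0\subset\subset\Omega_1\subset\subset\Omega_2$ and set $\varphi=\phi_{\Omega_2}$. Since $\PucF(x,D^2\varphi)+\mu\varphi=(\mu-\lambda_1^+(\Omega_2))\varphi<0\le k\varphi^p$, the function $\varphi$ is a positive supersolution in all of $\Omega_2$. Extend it by $+\infty$ outside $\Omega_2$; because $\varphi\to0$ at $\partial\Omega_2$, this extension never realizes the minimum near $\partial\Omega_2$.

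\emph{Step 2 (constant and boundary layer).} By \eqref{hypop2k}, $k_*:=\inf_{\Omega\setminus\Omega_1}k>0$. Take $L\ge\max\{\max\varphi,(\mu/k_*)^{1/(p-1)}\}$. Then $L$ is a supersolution in $\Omega\setminus\Omega_1$, because $\mu L\le k_*L^p\le kL^p$ there. In $\Omega_1$ we have $L\ge\varphi$, so wherever $L$ realizes the minimum inside $\Omega_1$, $\varphi$ realizes it too and is a valid supersolution. Next fix $\delta$ so small that $d\in C^2(\Omega_\delta)$, $\Omega_\delta\cap\overline{\Omega_2}=\emptyset$, and the computation of Corollary~\ref{bdryest} gives $C\,\PucP(D^2v)+\mu Cv<0$ in $\Omega_\delta$ for every $C\ge1$. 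Then choose $C$ with $C(\delta-\delta^\alpha)>L$, so that $Cv>L$ near $\{d=\delta\}$, and extend $Cv$ by $+\infty$ off $\Omega_\delta$.

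\emph{Step 3 (conclusion).} Set $w^+=\min\{\varphi,L,Cv\}$, with the extensions by $+\infty$ described above. By the choices made, near $\partial\Omega_2$ the minimum is $\varphi$, near $\{d=\delta\}$ it is not $Cv$, and near $\partial\Omega$ it is $Cv$. Hence $w^+$ is continuous, positive in $\Omega$, and vanishes on $\partial\Omega$. By the gluing remark and Steps 1–2 it is a viscosity supersolution of $\PucF(x,D^2w^+)+\mu w^+\le k(w^+)^p$ in $\Omega$. The remaining work is routine bookkeeping of the order in which the constants $\Omega_2,\Omega_1,L,\delta,C$ are fixed, which is the order given above.
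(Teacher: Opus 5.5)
\textbf{The gluing at $\partial\Omega_2$ fails.} You take $L\ge\max\varphi$ and $\Omega_\delta\cap\overline{\Omega_2}=\emptyset$. So your function is $w^+=\varphi$ in $\Omega_2$, and $w^+=L$ in a neighbourhood of $\partial\Omega_2$ outside $\Omega_2$. Since $\varphi\to0$ on $\partial\Omega_2$, $w^+$ jumps from $0$ to $L$ there, and the claim ``hence $w^+$ is continuous'' in Step 3 is false.

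You have the gluing rule backwards for $\varphi$. A function defined only on an open set $U$ and extended by $+\infty$ must be strictly \emph{larger} than its competitor near $\partial U$, so that the competitor realizes the minimum there. You arranged this correctly for $Cv$ near $\{d=\delta\}$; for $\varphi$ the opposite holds.

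This is not a technicality. Take $x_0\in\partial\Omega_2$ and let $\rho$ be the signed distance to $\partial\Omega_2$, positive outside. The function $\eta=\rho+A\rho^2-|x-x_0|^2$ is $\le 0$ on $\overline{\Omega_2}$ near $x_0$ and small outside, where $w^+=L$. So it touches the lower semicontinuous envelope of $w^+$ from below at $x_0$. Yet $\PucF(x_0,D^2\eta(x_0))\ge \PucM(D^2\rho(x_0)-2I)+2\lambda A\to+\infty$, while the supersolution inequality requires $\PucF(x_0,D^2\eta(x_0))\le k(x_0)\eta(x_0)^p=0$. Notice also that $\Omega_1$ plays no role in your construction, which is a symptom of the missing step.

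\textbf{What is missing} is a bridge between the eigenfunction and the constant, placed in a region where $k$ is bounded below. The paper does this as follows.
\begin{itemize}
\item It rescales $\phi_{\Omega_2}$ to $\widetilde{\phi_{\Omega_2}}\ge C$ on $\overline{\Omega_1}$ and uses it only inside $\Omega_1$, where it is large.
\item On the inner collar $\Omega_{1,\delta}$ it takes $\min\{\widetilde{\phi_{\Omega_2}},\psi\}$ with $\psi=C+\alpha d_{\Omega_1}^3$, and it sets $w^+=C$ on $\Omega\setminus\Omega_1$.
\item On $\partial\Omega_1$, $\psi$ equals $C$ with vanishing first and second derivatives, so it glues in a $C^2$ way with the constant.
\item On the inner edge of the collar, $\psi=\max_{\{d=\delta\}}\widetilde{\phi_{\Omega_2}}$, so the minimum there is $\widetilde{\phi_{\Omega_2}}$.
\item With $\alpha\delta^3$ proportional to $C$, one has $\PucF(x,D^2\psi)+\mu\psi\le c_\delta C$, while $k\psi^p\ge(\inf_{\Omega_{1,\delta}}k)\,C^p$. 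Hence $\psi$ is a supersolution in the collar for $C$ large; this is exactly where $p>1$ and (K2) enter.
\end{itemize}

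With this bridge in place, the rest of your plan works. Your Step 1 is a valid proof of the outer continuity of $\lambda_1^+$, which the paper simply assumes. Your boundary layer $\min\{\cdot,Cv\}$ is correct and actually improves on the paper. The paper's $w^+$ equals $C$ on $\partial\Omega$: that suffices for the iteration in Proposition \ref{exiKW}, but not for the literal boundary condition in the statement.
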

\begin{proof}
Let $\Omega_1$ and $\Omega_2$ be smooth subdomains such that $$\Omega_0\subset\subset\Omega_1\subset\subset\Omega_2\subset\subset\Omega$$ and 
\begin{equation}\label{7geneq1}
\mu<\lambda_1^+(\gO_2).
\end{equation}
Let $\phi_{\Omega_2}$ be the positive solution of 
\begin{equation*}
 \left\{\begin{array}{cl}
 \PucF(x,D^2\phi_{\gO_2})+\lambda^+_1(\Omega_2) \phi_{\gO_2}=0 & \mbox{in }\ \Omega_2\\
\phi_{\gO_2}=0 & \mbox{on }\ \partial\Omega_2\\
\left\|\phi_{\gO_2}\right\|_\infty=1 & \,
\end{array}
\right.
 \end{equation*}
 and set, for $x\in\overline{\Omega_2}$,
$$\widetilde{\phi_{\gO_2}}(x)=\frac{C}{\displaystyle\min_{\overline{\gO_1}}\,\phi_{\gO_2}}\phi_{\gO_2}(x),\qquad $$
where $C$ is a positive constant to be determined. Note that by construction
\begin{equation}\label{7geneq2}
\widetilde{\phi_{\gO_2}}(x)\geq C\qquad \forall x\in\overline{\Omega_1}
\end{equation}
and that
\begin{equation}\label{7geneq3}
\PucF(x,D^2\widetilde{\phi_{\gO_2}})+\mu\widetilde{\phi_{\gO_2}}\leq0\qquad \text{in $\Omega_2$}.
\end{equation}
Now we fix $\delta=\delta(\Omega_0,\Omega_1)$ positive and sufficiently small such that
$$
\Omega_0\subset\subset\Omega_1\setminus\overline{\Omega_{1,\delta}}
$$
and the distance function $d(x):=d_{\Omega_1}(x)$ is in $C^2(\overline{\Omega_{1,\delta}})$. Consider the function
$$
\psi(x)=C+\alpha d^3(x)\in C^2(\overline{\Omega_{1,\delta}}).
$$
We claim that for $C$ and $\alpha$ sufficiently large, then $\psi$ satisfies
\begin{equation*}
 \left\{\begin{array}{cl}
 \PucF(x,D^2\psi)+\mu\psi\leq k(x)\psi^p  & \mbox{in }\ \Omega_{1,\delta}\\
\psi\geq\widetilde{\phi_{\gO_2}} & \mbox{on }\ \partial\Omega_{1,\delta}\cap\Omega_1.\\
\end{array}
\right.
 \end{equation*}
For any $x\in\partial\Omega_{1,\delta}\cap\Omega_1$, it holds $d(x)=\delta$ and 
\begin{equation}\label{7geneq4'}
\psi(x)=C+\alpha\delta^3=\max_{\left\{d(x)=\delta\right\}}\widetilde{\phi_{\gO_2}}(x)
\end{equation} 
by selecting 
\begin{equation}\label{7geneq4}
\alpha\delta^3=\left(\frac{\displaystyle\max_{\left\{d(x)=\delta\right\}}\phi_{\gO_2}}{\displaystyle\min_{\overline{\gO_1}}\phi_{\gO_2}}-1\right)C.
\end{equation} 
Moreover 
\begin{equation}\label{7geneq4''}
\psi(x)=C\qquad\forall x\in\partial \Omega_1.
\end{equation} 
For $x\in\Omega_{1,\delta}$, the eigenvalues $e_i(D^2\psi(x))$ of
$$
D^2\psi(x)=6\alpha d(x) Dd(x)\otimes Dd(x)+3\alpha d^2(x)D^2d(x)
$$
are $$e_N(D^2\psi(x))=6 \alpha d(x)$$ with $Dd(x)$ as corresponding eigenvector, and $$e_i(D^2\psi(x))=3\alpha d^2(x)e_i(D^2d(x)),$$  $e_i(D^2d(x))$ being,  for $i=1,\ldots, N-1$,  the eigenvalues of $D^2d(x)$ corresponding to eigenvectors which are orthogonal to $Dd(x)$. Since $d\in C^2(\overline{\Omega_{1,\delta}})$, we can assume, reducing $\delta$ if necessary, that 
$$
e_i(D^2\psi(x))\leq 6\alpha\delta\qquad\forall x\in \Omega_{1,\delta}.
$$
Hence,  from the uniform ellipticity assumptions\eqref{hypop}-\eqref{hypop2}, using \eqref{7geneq4} and the facts that $\displaystyle \inf_{\Omega_{1,\delta}}k>0$ and $p>1$, we infer that for any $x\in\Omega_{1,\delta}$ 
\begin{equation}\label{7geneq5}
\begin{split}
\PucF(x,D^2\psi(x))+\mu\psi(x) &\leq
\mathcal{M}^+_{\lambda, \Lambda}(D^2\psi(x))+\mu\psi(x)\\&\leq 6\alpha\delta\Lambda N+\mu(C+\alpha\delta^3)\\
&=\left(\frac{6\Lambda N}{\delta^2}\left(\frac{\displaystyle\max_{\left\{d(x)=\delta\right\}}\phi_{\gO_2}}{\displaystyle\min_{\overline{\gO_1}}\phi_{\gO_2}}-1\right)+\frac{\displaystyle\max_{\left\{d(x)=\delta\right\}}\phi_{\gO_2}}{\displaystyle\min_{\overline{\gO_1}}\phi_{\gO_2}}\,\mu\right)C\\
&\leq \left(\inf_{\Omega_{1,\delta}}k\right) C^p\leq k(x)\psi^p(x)
\end{split}
\end{equation}
provided $C=C(\delta,\Lambda,\mu, N,\Omega_1,k,\phi_{\Omega_2})$ is sufficiently large. For later purpose we can further suppose that  
\begin{equation}\label{7geneq6}
C^{p-1}\geq\frac{\mu}{\displaystyle\inf_{\Omega\backslash{\Omega_1}}k}.
\end{equation}
We claim that the function
$$
w^+(x)=\left\{\begin{array}{cl}
C &  \text{if $x\in\Omega\backslash\Omega_1$}\\
\min\left\{\widetilde{\phi_{\gO_2}}(x),\psi(x)\right\} &  \text{if $x\in \Omega_{1,\delta}$}\\
\widetilde{\phi_{\gO_2}}(x) & \text{if $x\in\Omega_1\backslash\Omega_{1,\delta}$}
\end{array}\right.
$$
is the desired supersolution. For this, we first note that $w^+\in C(\overline\Omega)$ in view of \eqref{7geneq2}-\eqref{7geneq4'}-\eqref{7geneq4''} and it is positive in $\Omega$. \\Now, let $x_0\in\Omega$ and  let $\varphi\in C^2(\Omega)$ be a test function touching $w^+$ from below at $x_0$. \\
Suppose first that $x_0\in\Omega_1$. If $w^+(x_0)=\widetilde{\phi_{\gO_2}}(x_0)$, then $\varphi$ touches $\widetilde{\phi_{\gO_2}}$ by below at $x_0$, and therefore by \eqref{7geneq3} we have
$$
\PucF(x_0,D^2\varphi(x_0))+\mu\varphi(x_0)\leq0\leq k(x_0)\varphi^p(x_0).
$$
If instead $w^+(x_0)=\psi(x_0)<\widetilde{\phi_{\gO_2}}(x_0)$, then $x_0\in \Omega_{1,\delta}$ because of \eqref{7geneq4'}. Hence $\varphi$ touches $\psi$ by below at $x_0$ and, in view of \eqref{7geneq5}, we have
$$
\PucF(x_0,D^2\varphi(x_0))+\mu\varphi(x_0)\leq k(x_0)\varphi^p(x_0).
$$
Now we suppose that $x_0\in \Omega\backslash \Omega_1$. The conclusion in the case $x_0\in\Omega\backslash\overline{\Omega_1}$ simply follows by the inequality \eqref{7geneq6}. Lastly, if $x_0\in \partial\Omega_1$, then for any $x\in\Omega_{1,\delta}\cup \Omega\backslash \Omega_1$
$$
\varphi(x)\leq \zeta(x)=\left\{\begin{array}{cl}
C & \text{in $\Omega\backslash \Omega_1$}\\
\psi(x) & \text{in $\Omega_{1,\delta}$.}
\end{array}\right.\quad\;\text{and}\quad\;\varphi(x_0)=\zeta(x_0).
$$
Since $\zeta$ is a $C^2$ function in a neighbourhood of $x_0$ (because of \eqref{7geneq4''}), then using \eqref{7geneq6} we conclude
$$
\PucF(x_0,D^2\varphi(x_0))+\mu\varphi(x_0)\leq \mu C\leq k(x_0)\varphi^p(x_0).
$$

\end{proof}
We are now in a position to prove the existence part of Theorem \ref{KW}.

\begin{proposition}\label{exiKW} 
Suppose that $\PucF$ satisfies \eqref{hypop}, \eqref{hypop2} and \eqref{hypop2'}.
\begin{enumerate}
\item If $k\in C(\Omega)$  satisfies \eqref{hypop1k} and $\mu>\lambda_1^+(\Omega) $, then there exists a unique positive solution of \eqref{eq1}.
\item If $k\in C(\Omega)$ satisfies \eqref{hypop2k} and $\lambda_1^+(\Omega)<\mu< \lambda_1^+(\Omega_0)$, then there exists a unique positive solution of \eqref{eq1}.
\end{enumerate}
\end{proposition}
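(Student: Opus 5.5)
Uniqueness is immediate from Proposition \ref{comp}. If $u$ and $v$ are two positive solutions of \eqref{eq1}, both vanish on $\partial\Omega$. Applying Proposition \ref{comp} with $u_1=u$, $u_2=v$ gives $u\geq v$, and exchanging their roles gives equality. Here $k\not\equiv0$ under both \eqref{hypop1k} and \eqref{hypop2k}. So the work is existence, and I will use the sub/supersolution method, ordering the two barriers with Proposition \ref{comp}.

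\emph{Subsolution.} Take $w^-=\varepsilon\phi_\Omega$ with $\varepsilon>0$ small. Since $\mu>\lambda_1^+(\Omega)$ and $\|\phi_\Omega\|_\infty=1$, homogeneity \eqref{hypop2} gives
$$\PucF(x,D^2w^-)+\mu w^-=(\mu-\lambda_1^+(\Omega))\varepsilon\phi_\Omega\geq k_1\varepsilon^p\phi_\Omega^p\geq k(x)(w^-)^p$$
as soon as $\varepsilon^{p-1}\leq(\mu-\lambda_1^+(\Omega))/k_1$.

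\emph{Supersolution.} Under \eqref{hypop2k} I take $w^+$ from Lemma \ref{supso}. Under \eqref{hypop1k} no lemma is needed: a large constant $M$ with $M^{p-1}\geq\max\{\mu,0\}/k_0$ satisfies $\mu M\leq k(x)M^p$, hence is a supersolution, although $M\neq 0$ on $\partial\Omega$. For the ordering, I apply Proposition \ref{comp} with $u_1=w^+>0$ and $u_2=w^-\geq 0$. They agree ($=0$) on $\partial\Omega$ in the \eqref{hypop2k} case, and satisfy $M\geq 0$ there in the \eqref{hypop1k} case. This gives $w^-\leq w^+$ in $\Omega$.

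\emph{Solution between the barriers.} I then produce a solution $w^-\leq u\leq w^+$ with $u=0$ on $\partial\Omega$. The cleanest route in the viscosity setting is Perron's method: let $u$ be the supremum of all subsolutions $v$ with $w^-\leq v\leq w^+$. Standard Ishii arguments give that $u^*$ is a subsolution and $u_*$ a supersolution; these use \eqref{hypop2'} and the fact that the nonlinearity $k(x)s^p$ is continuous. The bump construction yields maximality, so $u$ solves the equation in $\Omega$.

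\emph{Boundary condition, expected main obstacle.} The delicate point is attaining $u=0$ continuously on $\partial\Omega$ and having comparison for $u^*$ versus $u_*$. In case (2) the bound $0\leq w^-\leq u\leq w^+$ with $w^\pm\in C(\overline\Omega)$ vanishing on $\partial\Omega$ forces $u^*=u_*=0$ there. In case (1) the supersolution $M$ does not vanish on the boundary, so I would use the upper bound of Corollary \ref{bdryest} with $\phi=0$, where $\psi_\phi=0$ and $\sup u\leq M$. This gives $u^*\leq Cd(x)$ and hence $u=0$ on $\partial\Omega$.

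Continuity of $u$ then follows from Proposition \ref{comp} applied to $u_*>0$ and $u^*$, which gives $u^*\leq u_*$. Positivity $u>0$ in $\Omega$ comes from $u\geq\varepsilon\phi_\Omega>0$.

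An alternative I would keep in reserve is monotone iteration. Set $u_0=w^+$ and let $u_{n+1}$ solve $\PucF(x,D^2u_{n+1})+(\mu-m)u_{n+1}=k u_n^p-mu_n$, with $m$ large so that the right side is monotone on $[0,\sup w^+]$ and $\mu-m<\lambda_1^+$. Each step is solvable by the standard Dirichlet theory. The iterates decrease between $w^-$ and $w^+$ by Proposition \ref{compF}, and the limit is a solution by the stability of viscosity solutions together with equicontinuity from $C^\alpha$ estimates.
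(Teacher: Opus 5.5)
Your uniqueness argument, the subsolution $\varepsilon\phi_\Omega$, the two supersolutions (a constant under \eqref{hypop1k}, Lemma \ref{supso} under \eqref{hypop2k}) and the ordering $w^-\le w^+$ via Proposition \ref{comp} all match the paper. The paper's existence step is exactly your ``reserve'' monotone iteration, with zero Dirichlet data imposed on every iterate.

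Your main Perron route, however, has a real gap at the boundary. As you define it, the class of subsolutions $v$ with $w^-\le v\le w^+$ carries no boundary condition, and it contains the constant $m=(\mu/k_1)^{1/(p-1)}$:
\begin{itemize}
\item $\PucF(x,0)=0$ by \eqref{hypop2}, so $\mu m\ge k(x)m^p$ and $m$ is a subsolution;
\item $w^-\le\varepsilon\le m$;
\item $m\le M$ in case (1), and $m\le C\le w^+$ in case (2) by \eqref{7geneq6}.
\end{itemize}
Hence the supremum satisfies $u\ge m$ in $\Omega$, and it does not vanish on $\partial\Omega$.

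Corollary \ref{bdryest} cannot rescue this. That corollary concerns subsolutions of the Dirichlet problem \eqref{eq1.10}. Its proof compares $u$ with $\psi_\phi+C(d-d^\alpha)$ on $\Omega_\delta$, which requires $u\le\phi=0$ on $\partial\Omega$. That is precisely what you are trying to prove for $u^*$, so applying it to $u$ is circular.

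Case \eqref{hypop2k} has the same defect. Lemma \ref{supso} is worded with $w^+=0$ on $\partial\Omega$, but the function built in its proof equals the constant $C$ on $\Omega\setminus\Omega_1$. So $w^+$ does not squeeze $u$ to zero at $\partial\Omega$, and the constant $m$ again lies in your class.

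The repair is standard:
\begin{itemize}
\item Restrict the Perron class to subsolutions with $\limsup_{x\to\partial\Omega}v(x)\le 0$. The function $w^-$ belongs to this class, and the bump modification is interior, so it preserves membership.
\item Apply Corollary \ref{bdryest} with $\phi=0$ to each member separately. The constant depends only on $\sup v\le\sup w^+$, so you get $v\le Cd$ uniformly and hence $u^*\le Cd$.
\end{itemize}
The rest of your argument then works: Ishii's envelopes, then Proposition \ref{comp} applied to $u_*\ge w^->0$ versus $u^*$, whose proof goes through verbatim for semicontinuous functions.

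By comparison, the paper's iteration needs neither semicontinuous envelopes nor the boundary estimate. Each $u_{n+1}$ solves a proper Dirichlet problem with zero data. The bounds $w^-\le u_n\le u_1$, with $u_1\in C(\overline\Omega)$ and $u_1=0$ on $\partial\Omega$, give the boundary behaviour of the limit directly. The cost is choosing the shift $\mu_0$ so that $k(x)t^p-\mu_0t$ is monotone on $[0,\max w^+]$.
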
 
\begin{proof}
We begin by constructing a subsolution for \eqref{eq1}. Recall that $ k(x)\leq k_1$ and let 
$$0<\beta\leq\left(\frac{\mu-\lambda_1^+(\Omega)}{k_1}\right)^{\frac{1}{p-1}}.$$
With this choice of $\beta$, the function $w^-(x)=\beta\phi_\gO(x)$ is a subsolution of \eqref{eq1}:
$$\PucF(x,D^2w^-)+\mu w^-=(\mu-\lambda_1^+(\Omega))w^-\geq k(x)(w^-)^p \ \mbox{in}\ \Omega,\;\, w^-=0\  \mbox{on}\ \partial\Omega.$$
Remark $w^-$ is a subsolution in both cases \eqref{hypop1k} and \eqref{hypop2k}.

We now proceed with the case where $k$ satisfies hypothesis \eqref{hypop1k}, so that $k_0\leq k(x)\leq k_1$. In this case we can chose as a supersolution the constant function $w^+(x)=C=\left(\frac{\mu}{k_0}\right)^{\frac{1}{p-1}}$, since
$$\PucF(x,D^2w^+)+\mu w^+=\mu C= k_0C^{p}\leq k(x)(w^+)^p.$$

When $k$ satisfies hypothesis \eqref{hypop2k}, the construction of the supersolution is more involved and it has been given in Lemma \ref{supso}.

\vspace{0.5cm}

Let $C_1=\max_{\overline \Omega}(w^+)$ and we choose  $\mu_0>\mu$ such that $\mu_0>pk_1C_1^{p-1}$. This implies that the function
$$f(x,t)=k(x)t^p-\mu_0t$$
is decreasing in $t\in [0,C_1]$ for any $x\in\Omega$. Without loss of generality we can always suppose that $\beta<\min_{\overline \Omega}(w^+)$.

Now, under either of the hypothesis on $k$, we can construct a monotone decreasing sequence of positive functions.

Let $u_0=w^+$, and $u_{n+1}$ be the solution of the following Dirichlet problem:
$$\left\{
\begin{array}{cl}
\PucF(x,D^2u_{n+1})+(\mu-\mu_0) u_{n+1}= k(x)u_n^p-\mu_0u_n & \text{in $\Omega$}\\
u_{n+1}=0 & \text{on $\partial\Omega$.}
\end{array}\right.
$$
Observe that
$$ \PucF(x,D^2u_{1})+(\mu-\mu_0) u_{1}=f(x,w^+)\geq \PucF(x,D^2w^+)+(\mu-\mu_0) w^+\quad\text{in $\Omega$}$$

Furthermore
$$ \PucF(x,D^2u_{1})+(\mu-\mu_0) u_{1}=f(x,w^+)\leq f(x,w^-)\leq  \PucF(x,D^2w^-)+(\mu-\mu_0) w^-\quad\text{in $\Omega$}.$$
Since the operator is proper, from the boundary condition we get
 $$w^-\leq u_1\leq u_0\leq C_1.$$
Suppose, for induction sake that, for some $n\in\mathbb N$, we have $w^-<u_{n}\leq u_{n-1}\leq w^+$. Then
$$ \PucF(x,D^2u_{n+1})+(\mu-\mu_0) u_{n+1}=f(x,u_n(x))\geq f(x,u_{n-1}(x))=\PucF(x,D^2u_{n})+(\mu-\mu_0) u_{n}\quad\text{in $\Omega$},$$
and
$$ \PucF(x,D^2u_{n+1})+(\mu-\mu_0) u_{n+1}=f(x,u_n(x))\leq f(x,w^-)\leq\PucF(x,D^2w^-)+(\mu-\mu_0) w^-\quad\text{in $\Omega$}.$$
Again, the comparison principle in  $\gO$ implies that
$$w^-\leq u_{n+1}\leq u_n\leq w^+.$$

Since the sequence $\left\{u_n\right\}$ is bounded, by standard $C^{0,\alpha}_\text{loc}(\Omega)$ estimates, see e.g. \cite[Proposition 4.10]{CC}, the functions $u_n$ are also locally equi-H\"older continuous. Since $w^-=u_1=0$ on $\partial \Omega$, then 
the monotone bounded sequence $u_n$ converges to $\bar u\geq \alpha\phi_\gO>0$ solution of
\eqref{eq1}.
\end{proof}

\begin{proof}[Proof of Theorem \ref{KW}]
It follows from Propositions \ref{23marprop1} and \ref{exiKW}.
\end{proof}

In the following propositions, we establish the existence  and uniqueness of positive solutions to the Dirichlet problem \eqref{eq1} posed in $\Omega\backslash\Omega_0$, with general continuous boundary condition on $\partial\Omega_0$.
\begin{proposition}\label{exi} Let $k\in C(\Omega\setminus\overline\Omega_0)$ and let $\phi\in C(\partial\Omega_0)$ be such that $k>0$ in $\Omega\setminus\overline\Omega_0$ and $\phi\geq 0$ in $\partial\Omega_0$ not identically zero. Then, for any $\mu\in\R$,
 there exists a unique  positive solution $u\in C(\overline\Omega)$ of
\begin{equation}\label{eq3}
 \left\{\begin{array}{cl}
 \PucF(x,D^2u)+\mu u=k(x)u^p & \mbox{in }\ \Omega\setminus\overline\Omega_0\\
u=0 & \mbox{on }\ \partial\Omega\\
u=\phi & \mbox{on }\ \partial\Omega_0
\end{array}
\right.
 \end{equation}
\end{proposition}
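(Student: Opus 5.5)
Write $D:=\Omega\setminus\overline\Omega_0$. The plan is the same sub/supersolution monotone iteration used in Proposition \ref{exiKW}, now on the annular domain $D$ with boundary data $0$ on $\partial\Omega$ and $\phi$ on $\partial\Omega_0$. Uniqueness will then follow from the comparison principle Proposition \ref{comp}, applied in $D$ as allowed by the Remark after its proof. Indeed, if $u,v$ are two positive solutions, each is both a sub- and a supersolution, $k>0$ in $D$, and the boundary values coincide, so $u\ge v$ and $v\ge u$. Positivity of any solution, hence the applicability of Proposition \ref{comp}, will come from the strong comparison principle Proposition \ref{strongcomp} with $u_2=0$: a nonnegative solution with $\phi\not\equiv0$ is not identically zero, so it is strictly positive in $D$.

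\textbf{Supersolution.} Since $\mu$ is arbitrary, I would not use eigenfunctions. I would take a large constant $w^+\equiv M$ with $M\ge\max_{\partial\Omega_0}\phi$. In the interior of $D$ this requires $\mu M\le k(x)M^p$, that is $M^{p-1}\ge \mu/k(x)$. Near $\partial\Omega$ this is harmless because $\inf_{\Omega\setminus\Omega_1}k>0$ for every $\Omega_1\supset\supset\Omega_0$. Near $\partial\Omega_0$, however, $k$ may degenerate to $0$, since $k=0$ on $\overline\Omega_0$. This is the main obstacle.

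To handle it, I would \emph{not} use a constant near $\partial\Omega_0$ but a function solving a linear problem there. Let $\psi$ be the solution of $\PucF(x,D^2\psi)+\mu\psi=-1$ in the thin collar $D\cap\{d_{\Omega_0}<\eta\}$, with $\psi=M$ on both boundary components. For $\eta$ small, $\mu<\lambda_1^+$ of the collar, by the small-domain maximum principle (Corollary \ref{smalld}), so $\psi$ exists and is positive. Then $\psi$ is a supersolution there, since $-1\le0\le k\psi^p$. I would glue $\psi$ to the constant $M'$ outside the collar via a minimum, as in Lemma \ref{supso}, choosing $M'\ge\max\psi$ on the inner side and $M'^{p-1}\ge\mu/\inf_{\{d_{\Omega_0}\ge\eta/2\}}k$. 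A cleaner alternative is to avoid gluing altogether: take $w^+=M+\gamma(\eta-d_{\Omega_0})_+^3$-type corrections. I would try the min-gluing first, since the paper already verifies such gluings in the viscosity sense. If $\mu\le0$, the constant $M=\max\phi$ works directly.

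\textbf{Subsolution.} I would take $w^-\equiv0$. This is a subsolution with boundary values $\le$ the data, but it gives no positivity. Positivity is instead obtained a posteriori from Proposition \ref{strongcomp}, as explained above.

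\textbf{Iteration.} Fix $\mu_0>\max(\mu,\,pk_1\max(w^+)^{p-1})$, so that $t\mapsto k(x)t^p-\mu_0t$ is decreasing on $[0,\max w^+]$. Define $u_0=w^+$ and let $u_{n+1}$ solve
\[
\PucF(x,D^2u_{n+1})+(\mu-\mu_0)u_{n+1}=k u_n^p-\mu_0u_n \quad \text{in } D,
\]
with $u_{n+1}=0$ on $\partial\Omega$ and $u_{n+1}=\phi$ on $\partial\Omega_0$. This proper Dirichlet problem is solvable by Perron's method with boundary barriers, available since $D$ is $C^2$. Exactly as in Proposition \ref{exiKW}, the comparison principle for proper operators gives $0\le u_{n+1}\le u_n\le w^+$.

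\textbf{Passage to the limit.} Local equi-H\"older estimates give convergence of $u_n$ to a solution in $D$. For continuity up to $\partial D$, I would use uniform boundary barriers of the type in Corollary \ref{bdryest}, built from $\psi_\phi\pm Cd$, whose constants depend only on the uniform bounds of $u_n$. Stability of viscosity solutions then shows that the limit solves \eqref{eq3}.
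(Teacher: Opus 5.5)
\textbf{Gap: the supersolution for $\mu>0$.} Your iteration, boundary barriers and uniqueness argument match what the paper does. The one nontrivial step, the supersolution for $\mu>0$, fails as you propose it.

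Your min-gluing does not work. Since $\psi>0$ and $\mu>0$, you have $\PucF(x,D^2\psi)=-1-\mu\psi<0$ in the collar. Comparison with the constant $M$ then gives $\psi\ge M$ on the whole collar, strictly inside. So $\min\{\psi,M'\}$ falls into one of two cases:
\begin{itemize}
\item If $M'\le M$, it is identically $M'$. That is just the constant you already rejected near $\partial\Omega_0$.
\item If $M'>M$, it equals $\psi$ near the outer edge $\{d_{\Omega_0}=\eta\}$, where $\psi\to M<M'$. Your own requirement $M'\ge\max_{\{d_{\Omega_0}=\eta/2\}}\psi>M$ forces this case.
\end{itemize}
In the second case you put $M'$ beyond the collar, so the glued function jumps \emph{up} from $M$ to $M'$ across $\{d_{\Omega_0}=\eta\}$. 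At an upward jump the supersolution inequality fails, because test functions from below can have arbitrarily large second derivative in the normal direction. A min-gluing needs the inner piece to lie above the outer one near the outer edge, as $C+\alpha d^3\ge C$ does in Lemma \ref{supso}. With the same value $M$ on both components of the collar this is impossible.

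Your alternative $M+\gamma(\eta-d_{\Omega_0})_+^3$ with $\gamma>0$ has the wrong convexity. Its Hessian has the positive normal eigenvalue $6\gamma(\eta-d_{\Omega_0})$, which dominates the curvature terms for small $\eta$. Hence $\PucF(x,D^2w^+)+\mu w^+\ge\mu M>0$ near $\partial\Omega_0$, while $k(w^+)^p\to0$ there. In Lemma \ref{supso} the cubic sits where $k$ is bounded below. Here you need strict concavity in the normal direction exactly where $k$ degenerates.

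\textbf{A working supersolution.} Take $w^+=M+K\,g(d_{\Omega_0})$ with $g(s)=\eta s-s^2/2$ for $s\le\eta$ and $g(s)=\eta^2/2$ for $s\ge\eta$.
\begin{itemize}
\item In the collar, $\PucF(x,D^2w^+)\le\PucP(D^2w^+)\le K\left(-\lambda+\Lambda N\eta\|D^2d_{\Omega_0}\|_\infty\right)$. This beats $\mu w^+\le\mu(M+K\eta^2/2)$ once $\eta$ is small and $K\ge 2\mu M/\lambda$.
\item The set $\{d_{\Omega_0}\ge\eta\}$ consists of maximum points of $w^+$, so test functions from below have $D^2\varphi\le0$ there. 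You then only need $M^{p-1}\ge\mu/\inf_{\{d_{\Omega_0}\ge\eta\}}k$, which is finite by \eqref{hypop2k}.
\item Finally take $M\ge\max\phi$ for the boundary condition on $\partial\Omega_0$.
\end{itemize}
With this $w^+$ your direct iteration goes through.

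\textbf{Comparison with the paper.} The paper avoids building a supersolution for the degenerate $k$. It solves the problem with $k+\delta$, for which $Mw$ is a supersolution, where $w$ comes from Proposition \ref{exiKW}(1) with some $\mu^\star>\max\{\mu,\lambda_1^+(\Omega)\}$. It then lets $\delta\to0$ using monotonicity in $\delta$. The constant in Corollary \ref{bdryest} depends on $\sup u_\delta$, so a $\delta$-independent supersolution like the one above is also what keeps that limit uniformly bounded.
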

\begin{proof} 

{\bf Step 1.} For $\delta>0$, we shall prove that there exists $u_\delta$ positive solution of
\begin{equation}\label{eq3d}
 \left\{\begin{array}{cl}
 \PucF(x,D^2u)+\mu u=(k(x)+\delta)u^p & \mbox{in }\ \Omega\setminus\overline\Omega_0\\
u=0 & \mbox{on }\ \partial\Omega\\
u=\phi & \mbox{on }\ \partial\Omega_0.
\end{array}
\right.
 \end{equation}
We begin by constructing a super-solution. Let $\mu^{\star}>\max \{\mu,\lambda_1^+(\Omega)\} $ 

We are in the hypothesis of (1) in Proposition \ref{exiKW}, hence there exists $w$ solution of
$$
 \left\{\begin{array}{cl}
 \PucF(x,D^2w)+\mu^\star w=(k(x)+\delta)w^p & \mbox{in }\ \Omega\\
w=0 & \mbox{on }\ \partial\Omega.
\end{array}
\right.
$$
Now take $M>1$ such that $Mw^+\geq\phi$ on $\partial\Omega_0$. It is easy to see that $w^+=Mw$ is a supersolution of \eqref{eq3d}.
On the other hand $w^-\equiv0$ is a subsolution. 

We now proceed to construct a sequence such that $u_0=w^+$ and recursively $u_{n+1}$ is the solution of
$$
 \left\{\begin{array}{cl}
\PucF(x,D^2u_{n+1})+(\mu-\mu_0) u_{n+1}=(k(x)+\delta)u_n^p -\mu_0u_n & \mbox{in }\ \Omega\\
u_{n+1}=0 & \mbox{on }\ \partial\Omega\\
u_{n+1}=\phi & \mbox{on}\ \partial\Omega_0,
\end{array}
\right.
$$
with $\mu_0$ chosen large enough that $f(x,t)=(k(x)+\delta)t^p-\mu_0 t$ is decreasing in $(0, \max_{\overline\Omega} w^+)$ and $\mu_0>\mu$.

Proceeding as in the proof of Proposition \ref{exiKW}, it is easy to see that the sequence is monotone decreasing and that uniform boundary estimates of Corollary \ref{bdryest} hold. 
Hence the sequence $u_n$ converges to the desired solution. We leave the details to the interested reader.

\smallskip
\noindent{\bf Step 2.} We want to prove that $u_\delta$ converges, as $\delta\to0$, to $u_0$ a solution of \eqref{eq3}. For this, we first note that, for any $\delta>0$, we have that
$$
\PucF(x,D^2u_\delta)+\mu u_\delta={(k(x)+\delta)}u_\delta^p\geq k(x)u_\delta^p\quad\text{in $\Omega$}.
$$
Hence, using Corollary \ref{bdryest} in $\Omega\backslash\overline\Omega_0$, we have that
\begin{equation}\label{23mareq2}
u_\delta(x)\leq\psi_\phi(x)+Cd(x),
\end{equation}
where $\psi_\phi$ is the solution of 
$$
\PucF(x,D^2\psi_\phi)=0\;\;\;\text{in $\Omega\setminus\overline\Omega_0$}\,,\;\;\psi_\phi=0\;\;\;\text{on $\partial\Omega$}\,\;\;\psi_\phi=\phi\;\;\;\text{on $\partial\Omega_0$}.
$$
Moreover, $u_\delta$ is monotone with respect to $\delta$. Indeed, if $\delta_1<\delta_2$, then
$$
\PucF(x,D^2u_{\delta_1})+\mu u_{\delta_1}\leq(k(x)+\delta_2)u^p_{\delta_1}\quad\,\text{in $\Omega\backslash\overline\Omega_0$}. 
$$
Combining Propositions \ref{strongcomp}-\ref{comp} we obtain $u_{\delta_2}<u_{\delta_1}$ in $\Omega\backslash\overline\Omega_0$. 
So, if we fix $\bar\delta>0$, using \eqref{23mareq2}, we deduce that for any $\delta\leq\bar\delta$ it holds 
%
 $$ u_{\bar\delta}\leq u_\delta\leq \psi_\phi+Cd\quad\,\text{in $\overline\Omega\backslash\Omega_0$}.$$
So $(u_\delta)_\delta$ is equibounded and, by $C^{0,\alpha}_\text{loc}$ estimates,  it converges to $u_0$ solution of \eqref{eq3}. Uniqueness of the positive solution to \eqref{eq3} follows from Proposition \ref{comp}. 
\end{proof}

We can now prove the existence of blow-up solutions.
\begin{theorem}\label{bup}  Let $k\in C(\Omega\backslash\overline\Omega_0)$ be such that $\displaystyle\inf_{\Omega\backslash\Omega_1}k>0$ whenever $\Omega_1\subsetneqq\Omega$ such that $\Omega_0\subset\subset\Omega_1$. Then, for any $\mu\in\R$,  there exist   $\overline U_\mu\geq \underline U_\mu>0$   respectively the smallest and the largest solution of 
\begin{equation}\label{eq4}
 \left\{\begin{array}{cl}
 \PucF(x,D^2u)+\mu u=k(x)u^p & \mbox{in }\ \Omega\setminus\overline\Omega_0\\
u=0 & \mbox{on }\ \partial\Omega\\
u=+\infty & \mbox{on }\ \partial\Omega_0,
\end{array}
\right.
 \end{equation}
i.e. such that any solution $u$ of \eqref{eq4} satisfies
$$\underline U_\mu\leq u\leq\overline U_\mu. $$
\end{theorem}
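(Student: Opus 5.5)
Following the standard Keller--Osserman scheme, I will obtain $\underline U_\mu$ as the increasing limit of solutions with finite boundary data $n$ on $\partial\Omega_0$, and $\overline U_\mu$ as the decreasing limit of blow-up solutions on slightly smaller annular domains. Everything rests on the comparison principle Proposition~\ref{comp} (valid in $\Omega\setminus\overline\Omega_0$ by the Remark) together with a local a priori upper bound of Keller--Osserman type.

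\textbf{Step 1 (minimal solution).} For $n\in\N$, Proposition~\ref{exi} with $\phi\equiv n$ gives a unique positive solution $u_n$ of \eqref{eq3}. Proposition~\ref{comp} yields $u_n\le u_{n+1}$, since $u_{n+1}$ is a positive supersolution that dominates $u_n$ on the boundary. The same proposition gives $u_n\le u$ for any solution $u$ of \eqref{eq4}; here I apply it on $\Omega\setminus\overline{\Omega_0^\varepsilon}$, where $\Omega_0^\varepsilon\supset\Omega_0$ is a small neighbourhood on whose boundary $u\ge n$ (possible because $u\to+\infty$ on $\partial\Omega_0$), and then let $\varepsilon\to0$.

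\textbf{Step 2 (local bound, the main obstacle).} To pass to the limit I need a bound for $u_n$ on compact subsets of $\overline\Omega\setminus\overline\Omega_0$ that is independent of $n$. For any $x_0$ with $r:=\tfrac12 d(x_0,\partial\Omega_0)$, I construct a radial large supersolution $V(x)=A\,(r^2-|x-x_0|^2)^{-2/(p-1)}$ on $B_r(x_0)$, intersected with $\Omega$.
\begin{itemize}
\item Using $\PucF\le\PucP$ and $k\ge k_r>0$ on the ball (from the hypothesis on $k$), a direct computation shows $\PucP(D^2V)+\mu V\le k_rV^p$ for $A$ large, depending only on $r,\lambda,\Lambda,N,\mu,p,k_r$.
\item Near $\partial\Omega$ I glue this with the boundary estimate of Corollary~\ref{bdryest}, or else work in balls away from $\partial\Omega$ and use Corollary~\ref{bdryest} separately.
\item Comparison on $B_r(x_0)$ via Proposition~\ref{comp}, valid since $V=+\infty$ on $\partial B_r$ (handled by shrinking the radius slightly), gives $u_n\le V$, hence $u_n(x_0)\le A r^{-4/(p-1)}$.
\end{itemize}
Together with the global bound near $\partial\Omega$ obtained from Corollary~\ref{bdryest}, this gives local equiboundedness. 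The delicate point is the Pucci computation for $V$: the dominant term $\lambda\,|\nabla|^2$-type growth must beat the $\Lambda$-weighted trace terms, which works because $p>1$ makes the power $-2/(p-1)-2$ match the nonlinearity.

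\textbf{Step 3 (limit).} By $C^{0,\alpha}_{\rm loc}$ estimates, $u_n\uparrow\underline U_\mu$, which is a solution locally uniformly in $\Omega\setminus\overline\Omega_0$ and equals $0$ on $\partial\Omega$. Since $\underline U_\mu\ge u_n$, it satisfies $\liminf\underline U_\mu\ge n$ at $\partial\Omega_0$ for every $n$, so it blows up there. Minimality follows from Step 1.

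\textbf{Step 4 (maximal solution).} Let $\Omega_0^{(m)}\subset\subset\Omega_0$ increase to $\Omega_0$ as $m\to\infty$, and let $U^{(m)}$ be the minimal blow-up solution on $\Omega\setminus\overline{\Omega_0^{(m)}}$ built as above. Here I use $k>0$ on $\Omega_0\setminus\overline{\Omega_0^{(m)}}$; if $k$ vanishes there, I instead take the domains $\Omega_0^{(m)}\supset\supset\Omega_0$ shrinking to $\Omega_0$.
\begin{itemize}
\item Comparison shows that $U^{(m)}$ is monotone in $m$ and that $U^{(m)}\ge u$ for every solution $u$ of \eqref{eq4}, because $U^{(m)}=+\infty$ where $u$ is finite.
\item The same local bound of Step~2 gives convergence to a solution $\overline U_\mu$, which blows up on $\partial\Omega_0$ since it dominates $\underline U_\mu$.
\item Maximality is inherited from the limit, and $\overline U_\mu\ge\underline U_\mu$.
\end{itemize}
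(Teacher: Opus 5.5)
Your overall scheme matches the paper's: $\underline U_\mu$ is the increasing limit of the solutions $u_n$ of \eqref{eq3} with $\phi\equiv n$, and $\overline U_\mu$ is a decreasing limit of blow-up solutions on smaller domains. The difference is how you get the $n$-independent upper bound.

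The paper builds one global supersolution $W^+=MW$ on $\Omega\setminus\overline\Omega_0$. It glues $(k^\star)^{-\gamma}$ near $\partial\Omega_0$, the cubic profile $\varepsilon+\alpha(\delta-d_{\Omega_0})^3$ in the middle, and $u_1$ near $\partial\Omega$. Here $k^\star=f(d_{\Omega_0})\le k$ is a $C^2$ minorant taken from Du--Huang, and $\gamma>3/(p-1)$. A single use of Proposition~\ref{comp} gives $u_n\le W^+$. This controls at once the interior, the zero boundary value on $\partial\Omega$, and the blow-up rate of $\underline U_\mu$ at $\partial\Omega_0$.

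Your Keller--Osserman balls are more elementary. They need only a positive lower bound for $k$ away from $\overline\Omega_0$, with no smooth minorant of $k$ and no viscosity gluing. They also bound every continuous nonnegative subsolution vanishing on $\partial\Omega$, not just $u_n$. The price is that the zero value on $\partial\Omega$ must be recovered separately, e.g.\ with the barrier of Corollary~\ref{bdryest} in $\Omega\setminus\overline{\Omega'}$ for some $\Omega_0\subset\subset\Omega'\subset\subset\Omega$.

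Two simplifications are available. First, nothing is delicate in the computation and $\lambda$ plays no role. Since $D^2V\ge0$, you have $\PucP(D^2V)=\Lambda\Delta V$. With $\rho=r^2-|x-x_0|^2$, you get $\Lambda\Delta V+\mu V\le C(r)A\,\rho^{-2/(p-1)-2}$, while $k_rV^p=k_rA^p\rho^{-2/(p-1)-2}$. Second, no gluing is needed near $\partial\Omega$: compare directly on $B_r(x_0)\cap\Omega$, where $u_n=0\le V$ on $\partial\Omega$.

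In Step 1, what you need on $\partial\Omega_0^\varepsilon$ is $u\ge u_n$, not $u\ge n$. It holds for small $\varepsilon$, since $u_n$ is continuous with value $n$ on $\partial\Omega_0$ while $u\to+\infty$.

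What does not work is the first option in Step 4, $\Omega_0^{(m)}\subset\subset\Omega_0$, and it must be discarded. Under the theorem's hypotheses $k$ is only given on $\Omega\setminus\overline\Omega_0$, and under \eqref{hypop2k} it vanishes on $\overline\Omega_0$. So there is no positivity of $k$ inside $\Omega_0$ to use.

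More importantly, even if there were, the comparison goes the wrong way. For $\Omega_0^{(m)}\subset\subset\Omega_0$, the function $U^{(m)}$ is finite on $\partial\Omega_0$, where $u=+\infty$. Proposition~\ref{comp} on $\Omega\setminus\overline\Omega_0$ then gives $U^{(m)}\le u$, so these are approximations from below.

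Your justification ``$U^{(m)}=+\infty$ where $u$ is finite'' is correct only for the outer approximation $\Omega_0^{(m)}\supset\supset\Omega_0$ shrinking to $\Omega_0$. There $\partial\Omega_0^{(m)}\subset\Omega\setminus\overline\Omega_0$. Comparison, after shrinking slightly near $\partial\Omega_0^{(m)}$, gives $U^{(m)}\ge U^{(m+1)}\ge u$ on $\Omega\setminus\overline{\Omega_0^{(m)}}$. The decreasing limit lies above $\underline U_\mu$ and below $U^{(1)}$ near $\partial\Omega$, and it is the maximal solution. That is exactly the paper's construction; with it as the only option, your argument goes through.
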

\begin{proof} 
 Let $\delta>0$ be sufficiently small that there exists (see the proof of Theorem 2.4 in  \cite{DuHu}) $ k^\star\in C^2(\overline\Omega_{0,\delta})$ such that, for some function $f$,
$$k^\star(x):=f(d_{\Omega_0}(x))\leq k(x). $$
Let $\gamma>\frac{3}{p-1}$ so that $w(x)=k^\star(x)^{-\gamma}$ satisfies, for some $C_0$ depending only on the ellipticity condition of $\PucF$, $\mu$ and the $C^2$ norm of $k^\star$, the inequality
 $$\PucF(x,D^2w)+\mu w\leq  C_0k^\star(x)^{-\gamma-2}\quad\;\forall x\in \Omega_{0,\delta}.$$
Reducing $\delta$ if necessary, we may further assume that $\Omega_\delta\cap\Omega_{0,\delta}=\emptyset$ and that 
$C_0k^\star(x)^{-\gamma-2}\leq k(x)w^p(x)$ for any $x\in\Omega_{0,\delta}$. Hence 
\begin{equation}\label{5febeq1}
\PucF(x,D^2w)+\mu w\leq k(x)w^p(x)\quad\;\forall x\in \Omega_{0,\delta}.
\end{equation}
Let $u_n$ be the sequence of solutions of  \eqref{eq3} with $\phi(x)=n$. Pick $\varepsilon$ such that 
\begin{equation}\label{5febeq2}
0<\varepsilon<\min\left\{\min_{\left\{d_{\Omega}(x)=\delta\right\}}u_1\,,\,\min_{\left\{d_{\Omega_0}(x)\leq\delta\right\}}w
\right\}
\end{equation}
and define
$$
\psi(x)=\left\{\begin{array}{cl}
\varepsilon & \text{if $d_{\Omega_0}(x)\geq\delta$}\\
\varepsilon +\alpha(\delta-d_{\Omega_0}(x))^3 & \text{if $\frac\delta2\leq d_{\Omega_0}(x)<\delta$,}
\end{array}\right.
$$
where $$\alpha=\frac{\max_{\left\{d_{\Omega_0}(x)=\frac\delta2\right\}}w-\varepsilon}{{(\delta/2)^3}}$$ is chosen in such a way 
\begin{equation}\label{5febeq3}
\psi(x)\geq w(x)\quad\;\text{if  $d_{\Omega_0}(x)=\frac\delta2$.}
\end{equation}
 Note that $\psi\in C^2(\Omega\backslash\Omega_{0,\frac\delta2})$ and that 
\begin{equation}\label{5febeq4}
\PucF(x,D^2\psi)+\mu \psi\leq C\quad\;\forall x\in\Omega\backslash\Omega_{0,\frac\delta2}
\end{equation}
for some constant $C=C(\varepsilon,\alpha, \delta,\mu)$. \\
Let 
$$
W(x)=\left\{\begin{array}{cl}
w(x) & \text{if $d_{\Omega_0}(x)<\frac\delta2$}\\
\min\left\{w(x),\psi(x)\right\} & \text{if $\frac\delta2\leq d_{\Omega_0}(x)<\delta$}\\
\psi(x) & \text{if $d_{\Omega_0}(x)\geq\delta$ and $d_{\Omega_0}(x)\geq\delta$}\\
\min\left\{\psi(x),u_1(x)\right\} & \text{if $d_{\Omega_0}(x)<\delta$}.
\end{array}\right.
$$
In view of \eqref{5febeq2}-\eqref{5febeq3} the function $W$ is continuous in $\Omega\backslash\overline{\Omega_0}$. We claim that for 
$$
M^{p-1}\geq\max\left\{1\,,\,\frac{C}{\varepsilon ^p\displaystyle\inf_{\Omega\backslash\Omega_{0,\frac\delta2}} k}\right\},
$$
the function $W^+(x)=MW(x)$ is a supersolution of 
 $$\PucF(x,D^2u)+\mu u= k(x)u^p\quad\;\text{in $\Omega\backslash\overline{\Omega_0}$.}$$
By the choice of $M$, it is easy to realize that the functions $Mw$, $M\psi$ and $M u_1$ are supersolutions of the above equation respectively in $\Omega_{0,\delta}$, $\Omega\backslash\overline{\Omega_{0,\frac\delta2}}$ and $\Omega\backslash{\overline\Omega_0}$. 
Hence  for $x\notin\partial\Omega_{0,\frac\delta2}\cup \partial\Omega_{0,\delta}\cup \partial\Omega_{\delta}$, the function $W^+$ satisfies, in the viscosity sense, 
the inequality $\PucF(x,D^2u(x))+\mu u(x)\leq k(x)u^p(x)$. 
To check the supersolution property in the remaining cases, let $\varphi\in C^1(\Omega\backslash\overline{\Omega_0})$ touching from below $W^+$ at $x_0\in\partial\Omega_{0,\frac\delta2}\cup \partial\Omega_{0,\delta}\cup \partial\Omega_{\delta}$. If $x_0\in\partial\Omega_{0,\frac\delta2}$, from \eqref{5febeq3} we infer that $\varphi$ is in fact a test function for $Mw$ and the required inequality holds. 
If instead $x_0\in\partial\Omega_{0,\delta}\cup \partial\Omega_{\delta}$, then  $W^+\equiv M\psi$ in a neighborhood on $x_0$ because of \eqref{5febeq2}, and the claim is proved.  

\smallskip

Hence by Proposition \ref{comp} the sequence $u_n$ is monotone increasing and bounded above by $W^+$ and it converges to $\underline U_\mu$ solution of \eqref{eq4}. Furthermore it is easy to see that $\underline U_\mu$ is the smallest of all blow up solutions of \eqref{eq4}.

In order to construct the largest of blow up solutions, consider a sequence of domains $\Omega_n$ such that for every $n\in\N$, $\Omega_{n+1}\subset\Omega_n\subset\Omega$ 
and $\Omega_0=\cap_n\Omega_n$. Let $v_n$ be a solution of 
$$
 \left\{\begin{array}{cl}
 \PucF(x,D^2v_n)+\mu v_n=k(x)v_n^p & \mbox{in }\ \Omega\setminus\overline\Omega_n\\
v_n=0 & \mbox{on }\ \partial\Omega\\
v_n=+\infty & \mbox{on }\ \partial\Omega_n.
\end{array}
\right.
$$
Then $v_n$ is a monotone decreasing sequence of solutions converging to $\overline U_\mu$ a solution of \eqref{eq4}. It is easy to see that it is the largest of all solutions of \eqref{eq4}.
\end{proof} 


\section{Asymptotic results}

\begin{proof}[Proof of Theorem \ref{converg1}]
By the monotonicity proved in Corollary \ref{mon},  it is enough to study the asymptotic behaviour of the sequence $\{u_n=u_{\mu_n}\}$, where  $\{\mu_n\}$ is a monotone decreasing sequence converging to  $\lambda_1^+(\Omega)$. The sequence $\{u_n\}$ is monotone decreasing and uniformly bounded, hence it converges locally uniformly to 
 $u_0\geq 0$ nonnegative solution of
$$
 \PucF(x,D^2u_0)+\lambda_1^+(\Omega) u_0=k(x)u_0^p\geq 0\;\;\text{in $\Omega$,}\;u_0=0\;\;\text{on $\partial\Omega$}. $$
By Proposition \ref{maxp}, $u_0\equiv\tau\phi_1$ for some $\tau\geq0$. Since  $k\not \equiv 0$, then necessarily $\tau=0$ and $u_0\equiv0$. 

The above argument shows  that $u_\mu\to 0$ uniformly in $\Omega$ as $\mu\to \lambda_1^+(\Omega)$. Observing further that 
 the functions $v_\mu=\frac{u_\mu}{\| u_\mu\|_\infty}$ satisfy
 $$
 \left\{\begin{array}{cl}
 \PucF(x,D^2v_\mu)+\mu v_\mu= \|u_\mu\|_\infty^{p-1} k(x) v_\mu^p & \mbox{in }\ \Omega\\
 v_\mu>0 
 & \mbox{in }\ \Omega \\
v_\mu=0 & \mbox{on }\ \partial\Omega,
\end{array}
\right.
$$
standard $C^{0,\alpha}_{\text{loc}}$ elliptic estimates imply that, locally uniformly in $\Omega$,  $v_\mu\to v$, where $v$ is a nonnegative  solution of 
$$
\left\{\begin{array}{cl}
 \PucF(x,D^2v)+\lambda_1^+(\Omega) v= 0 & \mbox{in }\ \Omega\\
v=0 & \mbox{on }\ \partial\Omega.
\end{array}
\right.
$$
Moreover, denoting by $x_\mu$ any maximum point of $v_\mu$,  by Corollary \ref{bdryest} we also infer that there exists a positive constant $C$, independent of $\mu$ (since $\mu$ is bounded in this argument), such that $1=v_\mu(x_\mu)\leq C d(x_\mu)$. Thus $(x_\mu)_\mu$ is bounded away from the boundary of $\Omega$ and the uniform local convergence of $(u_\mu)_\mu$ ensures that $\left\|v\right\|=1$. By Proposition \ref{maxp}, we conclude that $v=\phi_\Omega$.
\end{proof}

In order to provide a proof for Theorem 3, we need to assume the H\"older continuity of the coefficients appearing in equation \eqref{eq1}. Precisely, we assume that there exist $\alpha\in (0,1)$ and $C>0$ such that
\begin{equation}\label{H}\tag{H}
k\in C^{0,\alpha}(\Omega) \quad \hbox{ and } \quad |\PucF(x, M)-\PucF(y,M)|\leq C\, \|M\| |x-y|^\alpha\qquad \forall\, x\, , y\in \Omega\, ,\ M\in \mathcal{S}_N\, .
\end{equation}
Under this additional assumption, solutions of equation \eqref{eq1} satisfying smooth boundary conditions enjoy global $C^{1,\alpha}$ a priori estimates, see \cite{SS}.

{\begin{proof}[Proof of Theorem \ref{converg2}]

We partially follow the proof  of Du and Huang \cite{DuHu}. For $n\geq 1$, let us introduce the set
$$
\Omega_n=\left\{ x\in\Omega\ : d_{\Omega_0}(x)<\frac{1}{n}\right\}
$$
which will be assumed, without loss of generality, to be contained in $\Omega$. We further set $\mu_n=\lambda_1^+(\Omega_n)$. Then, $\lambda_1^+(\Omega) < \mu_n< \lambda_1^+(\Omega_0)$ and $\mu_n\to \lambda_1^+(\Omega_0)$. As in the proof of Theorem \ref{converg1}, in order to study the asymptotic behavior of $u_\mu$ as $\mu\to \lambda_1^+(\Omega_0)$, it is enough to analyze the monotone increasing sequence $u_n=u_{\mu_n}$.

{\bf Step 1}: For any compact subset $K\subset \Omega_0$, one has
$$
u_n \to +\infty \quad \hbox{ uniformly  in } K\, .
$$
Let us set
$$
\alpha\, := \inf_{\Omega_0}u_1\, ,\qquad \beta \, := \min_{K} \phi_{\Omega_0}\, .
$$
Then, both $\alpha$ and $\beta$ are positive, and $u_n\geq \alpha$ in $\Omega_0$ for all $n\geq 1$. For $M>0$ fixed arbitrarily large, we can select a compact set $K^*$, with $K\subset K^*\subset \Omega_0$ such that
$$
\phi_{\Omega_0} \leq \frac{\alpha \beta}{2M}\qquad \hbox{ on } \partial K^*\, .
$$
Let us further denote by $\phi_n$ the eigenfunction $\phi_{\Omega_n}$. We observe that $\phi_n\to \phi_{\Omega_0}$ uniformly in $K^*$. Then, for $n$ sufficiently large, we have
$$
\phi_n < \frac{\alpha \beta}{M}\quad \hbox{ on } \partial K^*\, , \quad \phi_n> \frac{\beta}{2}\quad \hbox{ in } K\, .
$$
Moreover, both $u_n$ and $\frac{M}{\beta} \phi_n$ are solutions of the equation
$$
\PucF(x,D^2v)+\mu_n v= 0 \quad  \mbox{in  int}(K^*)
$$
and $\frac{M}{\beta} \phi_n \leq \alpha \leq u_n$ on $\partial K^*$. Since $\mu_n< \lambda_1^+(\Omega_0)< \lambda^+_1({\rm int}(K^*))$,  by Corollary \ref{compmu}  we deduce that $u_n\geq \frac{M}{\beta} \phi_n$ in $K^*$. Hence, $u_n\geq \frac{M}{2}$ in $K$ and, by the arbitrariness of $M>0$, we obtain the conclusion of Step 1.
\medskip

{\bf Step 2}: $\inf_{\Omega_0} u_n \to +\infty$.

By the minimum principle, for all $n\geq 1$ there exists   a minimum point $x_n\in \partial \Omega_0$ for $u_n$ in $\overline{\Omega_0}$.  Let us assume, by contradiction, that the conclusion of Step 2 is false. Then, up to a subsequence still denoted with $n$, we have that $\{ u_n(x_n)\}$ is a bounded sequence.

By the uniform interior ball property for $\Omega_0$, there exists $R>0$ and, for each $n\geq 1$, a point $y_n\in \Omega_0$ such that $\overline{B_R(y_n)} \subset \overline{\Omega_0}$ and $\overline{B_R(y_n)}\cap \partial \Omega_0= \{x_n\}$. 

For $\sigma\, ,\  c_n>0$ to be suitably fixed, let us consider the function
$$
v_n(x)= u_n(x_n)+ c_n\left( e^{-\sigma |x-y_n|^2}- e^{-\sigma R^2}\right)\, .
$$
A direct computation shows that,  for $\sigma>0$ sufficiently large independently of $c_n$, $v_n$ satisfies
$$
\PucF(x, D^2v_n)+\mu_n v_n\geq 0\quad \hbox{ in } B_R(y_n)\setminus B_{R/2}(y_n)\, .
$$
Moreover, on $\partial B_R(y_n)$ we have $v_n(x)=u_n(x_n)\leq u_n(x)$,  as well as $v_n(x)\leq u_n(x)$ for $x\in \partial B_{R/2}(y_n)$ by choosing
$$
c_n= \frac{\inf_{B_{R/2}(y_n)}u_n-u_n(x_n)}{e^{-\sigma R^2/4}-e^{-\sigma R^2}}\, .
$$
We note that, by Step 2 and the assumption on the boundedness of $\{u_n(x_n)\}$, we have that $c_n\to +\infty$.

Corollary \ref{compmu} implies that $u_n\geq v_n$ in $B_R(y_n)\setminus B_{R/2}(y_n)$, and $u_n(x_n)=v_n(x_n)$. Hence, setting $\nu_n=\frac{y_n-x_n}{|y_n-x_n|}$, we obtain
\begin{equation}\label{unbound}
\frac{\partial u_n}{\partial \nu_n}(x_n)\geq \frac{\partial v_n}{\partial \nu_n}(x_n)= c_n \left( 2 \sigma R e^{-\sigma R^2}\right) \to +\infty\, .
\end{equation}
On the other hand, we can consider the solution $w_n$ of the problem
$$
\left\{\begin{array}{lc}
 \PucF(x,D^2w_n)+\mu_n w_n=k(x)w_n^p & \mbox{in }\ \Omega\setminus\overline\Omega_0\\
 w_n>0 
 & \mbox{in }\ \Omega \setminus\overline\Omega_0\\
w_n=0 & \mbox{on }\ \partial\Omega\\
w_n=u_n(x_n) & \mbox{on }\ \partial\Omega_0
\end{array}
\right.
$$
By the comparison principle given in Proposition \ref{comp}, we deduce that $u_n\geq w_n$ in $\Omega \setminus \Omega_0$ and, obviously, $u_n(x_n)=w_n(x_n)$. Hence, we also have
$$
\frac{\partial u_n}{\partial \nu_n}(x_n)\leq \frac{\partial w_n}{\partial \nu_n}(x_n)\, .
$$
Since $\{u_n(x_n)\}$ is bounded, the maximum principle implies that $\{w_n\}$ is uniformly bounded in $\overline{\Omega}\setminus \Omega_0$. By elliptic $C^1$ estimates, see \cite{SS}, we then obtain that $\{w_n\}$ is bounded in $C^1( \overline{\Omega}\setminus \Omega_0)$, hence there exists $C_0>0$ such that
$$
\frac{\partial u_n}{\partial \nu_n}(x_n)\leq \frac{\partial w_n}{\partial \nu_n}(x_n)\leq C_0\, ,
$$
which is a contradiction to \eqref{unbound}.
\medskip

{\bf Step 3}: $u_n\to \underline{U}$ locally uniformly in $\Omega\setminus \Omega_0$.

By comparison, we have that $u_n\leq \underline{U}$ in $\Omega\setminus \Omega_0$. Hence, the monotone increasing sequence $\{u_n\}$ is locally uniformly bounded in $\Omega\setminus \overline{\Omega_0}$. Thus, by using also Step 2, we obtain that $u_n\to V$  locally uniformly in $\Omega\setminus \overline{\Omega_0}$, where $V$ is a solution of problem \eqref{eq2} satisfying further $V\leq \underline{U}$. By minimality of $\underline{U}$, we then conclude $V\equiv \underline{U}$.
\medskip

{\bf Step 4}: $\frac{u_\mu}{\|u_\mu\|_{L^\infty (\Omega_0)}}\rightarrow \phi_{\Omega_0}\ \hbox{uniformly in}\ \overline{\Omega_0}$.

Let us start by considering  the sequence of functions $v_n\, :=\frac{u_n}{\|u_n\|_{\infty}}=\frac{u_n}{\|u_n\|_{L^\infty (\Omega)}}$, where $u_n=u_{\mu_n}$ and $\{ \mu_n\}$ is any sequence belonging to $\left( \lambda_1^+(\Omega)\, , \lambda_1^+(\Omega_0)\right)$ and converging to $\lambda_1^+(\Omega_0)$.

Since $\{  v_n\}$ is bounded in $L^\infty(\Omega)$, there exists a limit function $v\in L^\infty(\Omega)$, with $\|v\|_{L^\infty(\Omega)}\leq 1$, such that, up to a subsequence still denoted by $\{v_n\}$, one has $v_n \stackrel{\ast}{\rightharpoonup} v$ in $L^\infty(\Omega)$, that is
$$
\int_\Omega v_n\, \psi\, dx \to \int_\Omega v\, \psi\, dx \quad \hbox{ for all } \psi\in L^1(\Omega)\,.
$$
On the other hand, by Steps 2 and 3 above,  we know that $\{u_n\}$ is locally uniformly bounded in $\Omega\setminus \overline{\Omega_0}$ and $\| u_n\|_{\infty}\to +\infty$, so that   $v=0$ in $\Omega\setminus \overline{\Omega_0}$ and $v_n\to 0$ locally uniformly in $\Omega\setminus \overline{\Omega_0}$. Furthermore, the functions $v_n$ satisfy 
\begin{equation}\label{vn}
\PucF(x,D^2v_n)+\mu_n  v_n=k(x)\, \|u_n\|_{\infty}^{p-1} v_n \left\{
\begin{array}{ll} \geq 0 & \hbox{ in } \Omega\\[1ex] 
=0 & \hbox{ in }\Omega_0
\end{array}\right.
\end{equation}
and they are uniformly bounded. By applying standard interior elliptic estimates, we deduce that, again up to a subsequence neglected in the notation, $\{v_n\}$ is locally uniformly converging to a continuous solution of the eigenvalue equation in $\Omega_0$. Hence, the limit function $v$ is continuous in $\Omega_0$ and satisfies 
\begin{equation}\label{limitv}
\PucF(x,D^2v)+\lambda_1^+(\Omega_0) v=0\, ,\quad v\geq 0 \quad  \hbox{in }\Omega_0\, .
\end{equation}
By the strong minimum principle, we have that either $v>0$ in $\Omega_0$ or $v\equiv 0$ in $\Omega_0$. Let us prove  that the latter cannot occur. Arguing by contradiction, let us assume that $v\equiv 0$ in $\Omega_0$, hence $v=0$ almost everywhere in $\Omega$. Since $v_n$ satisfies the first inequality in \eqref{vn}, we can apply the so called local maximum principle for subsolutions of uniformly elliptic equations, see   \cite[Theorem 4.8]{CC}, yielding in particular that for any ball $B_{2r}(x_0) \subset\subset \Omega$ one has
$$
\sup_{B_r(x_0)} v_n \leq C\, \left[ \frac{1}{|B_{2r}(x_0)|}\int_{B_{2r}(x_0)} v_n\, dx + r \| \mu_n v_n\|_{L^N(B_{2r}(x_0))} \right]\, ,
$$where $C>0$ is a universal constant depending only on $\lambda\, , \Lambda$ and $N$. Now, if $v_n \stackrel{\ast}{\rightharpoonup} 0=v$ in $L^\infty(\Omega)$, by using a covering argument, for any $\varepsilon >0$ sufficiently small we obtain the estimate
$$
\limsup_{n\to \infty} \sup_{\mathcal{N}_\epsilon} v_n\leq C\, \varepsilon\, ,
$$
where $\mathcal{N}_\varepsilon$ is the tubular neighborhood of $\partial \Omega_0$ with radius $\varepsilon$. This implies that $v_n\to 0$ uniformly in $\Omega$, which is a contradiction to $\|v_n\|_\infty = 1$.

Hence, we have proved that $v$ is a strictly positive solution of \eqref{limitv}. Now, since for every $t\geq0$ the function $z_t=t\phi_{\Omega_0}-v$ is a subsolution of $\mathcal {M}^+(D^2z_t)+\lambda_1^+(\Omega_0)z_t=0$ in $\Omega_0$ and $\limsup_{x\to\partial\Omega_0}z_t(x)\leq0$, 
 the argument in \cite[Theorem 4.1]{QS} implies that $v=\tau\, \phi_{\Omega_0}\chi_{\Omega_0}$ for some $\tau>0$, where $\chi_{\Omega_0}$ denotes the characteristic function of $\Omega_0$.  Finally,  by applying again the local maximum principle,  we can   uniformly estimate the functions  $v_n$ in small tubular neighborhoods of $\partial \Omega_0$ and obtain that $v_n\to v$ uniformly in $\Omega$. Since $\|v_n\|_\infty =1$, this implies that $\tau =1$ and $\|u_n\|_\infty =\| u_n\|_{L^\infty (\Omega_0)}$ for $n$ large enough. Hence, we obtain that, up to a subsequence, 
$$
\frac{u_n}{\|u_n\|_{L^\infty(\Omega_0)} }\to \phi_{\Omega_0}\quad \hbox{ uniformly in } \overline{\Omega_0}\, .
$$
The above proof can be applied to any subsequence extracted from $\left\{\frac{u_n}{\|u_n\|_{L^\infty(\Omega_0)}}\right\}$. Hence, we deduce the convergence of the whole sequence and, by the arbitrariness of $\{ \mu_n\}$, we get the statement of Step 4.
\end{proof}

{\bf Data Availability Statement} Data sharing is not applicable to this article as no datasets were generated or
analyzed in this article.
\end{document}